\newtheorem{thm}{Theorem}[section]
\newtheorem{lemma}[thm]{Lemma}
\newtheorem{cor}[thm]{Corollary}
\newtheorem{prop}[thm]{Proposition}
\newtheorem{question}[thm]{Question}
\theoremstyle{definition}  
\numberwithin{equation}{section}
\newtheorem{defn}[thm]{Definition}
\newtheorem{notn}[thm]{Notation}
\newtheorem{ex}[thm]{Example}
\newtheorem{remark}[thm]{Remark}
\newtheorem{conj}[thm]{Conjecture}
\theoremstyle{definition}
\theoremstyle{remark}
\newcounter{enumitemp}
\newcommand\pref[1]{(\ref{#1})}
\DeclareMathOperator{\Fix}{Fix}
\DeclareMathOperator{\card}{card}
\DeclareMathOperator{\ent}{ent}
\DeclareMathOperator{\Diff}{Diff}
\DeclareMathOperator{\Df0}{Diff_0}
\DeclareMathOperator{\Symp}{Symp}
\DeclareMathOperator{\mcg}{MCG}
\DeclareMathOperator{\Int}{int}
\DeclareMathOperator{\Per}{Per}
\DeclareMathOperator{\fr}{fr}
\DeclareMathOperator{\cl}{cl}
\DeclareMathOperator{\zz}{{\cal Z}}
\newcommand{\cA}{{\cal A}}
\newcommand{\C}{{\cal C}}
\newcommand{\R}{\mathbb R}
\newcommand{\cS}{{\cal S}}
\newcommand{\cT}{{\cal T}}
\newcommand{\X}{{\chi}}
\newcommand{\cZ}{{\cal Z}}
\newcommand{\T}{\mathbb T}
\newcommand{\Z}{\mathbb Z}
\def\D{{\mathbb D}}
\def\ti{\tilde}
\def\sl3z{SL(3, \mathbb Z)}
\def\O{{\cal O}}
\def\G{{\cal G}}
\def\cS{{\cal S}}
\def\A{{\mathbb A}}
\newcommand{\trycomment}[1]{} 
\DeclareMathOperator{\Cent}{Cent}
\DeclareMathOperator{\Out}{Out}
\DeclareMathOperator{\Aut}{Aut}
\title{Some virtually abelian subgroups of 
the group of analytic symplectic diffeomorphisms of $S^2$
}
\author{John Franks,\thanks{Supported in part by NSF grant DMS0099640.}\ \ 
Michael Handel\thanks{Supported in part by NSF grant DMS0103435.}}
\begin{document}
\maketitle
\begin{abstract}
We show that if $M$ is a compact oriented surface of genus $0$ and $G$
is a subgroup of $\Symp^\omega_\mu(M)$ which has an infinite normal
solvable subgroup, then $G$ is virtually abelian.  In particular 
the centralizer of an infinite order $f \in \Symp^\omega_\mu(M)$
is virtually abelian. Another immediate corollary is that if $G$
is a solvable subgroup of $\Symp^\omega_\mu(M)$ then $G$ is virtually
abelian. We also prove a special case of the Tits Alternative 
for subgroups of  {$\Symp^\omega_\mu(M).$}
\end{abstract}
\section{Introduction}

If $M$ is a compact connected oriented surface we let $\Df0^r(M)$
denote the $C^r$ diffeomorphisms isotopic to the identity.  In
particular if $r = \omega$ this denotes the subgroup of those which
are real analytic. Let $\Symp^r_\mu(M)$ denote the subgroup of
$\Df0^r(M)$ which preserve a smooth volume form $\mu$.  We
will denote by $\Cent^r(f)$ and $\Cent^r_\mu(f)$ the subgroups
of $\Df0^r(M)$ and $\Symp^r_\mu(M)$, respectively, whose elements
commute with $f.$ If $G$ is a subgroup of $\Symp^r_\mu(M)$ then
$\Cent^r_\mu(f,G)$ will denote the the subgroup of $G$ whose
elements commute with $f.$  In this article we wish to address the
algebraic structure of subgroups of $\Symp^\infty_\mu(M)$ and our
results are largely limited to the case of analytic diffeomorphisms
when $M$ has genus $0$. Our approach is to understand the possible
dynamic behavior of such diffeomorphisms and use the structure
exhibited to conclude information about subgroups.

\begin{defn} 
If $N$ is a connected manifold,
an element $f \in \Diff^r(N)$  will be said to have {\em full support}
provided $N \setminus \Fix(f)$ is dense in $N$.  We will say that
$f$ has {\em support of finite type} if  both $\Fix(f)$ and 
$N \setminus \Fix(f)$ have finitely many components.
We will say that a subgroup $G$ of $\Diff^r(N)$ has {\em full support
of finite type} if every non-trivial element of $G$ 
has full support of finite type. 
\end{defn}

The primary case of interest for this article is analytic
diffeomorphisms, but we focus on groups with full support of finite
type to emphasize the properties we use, which are dynamical rather
than analytic in nature.  The following result shows we include
$\Diff^\omega(M)$.

\begin{prop}\label{analytic}
If $N$ is a compact connected manifold, the group 
$\Diff^\omega(N)$ has full support of finite type.
\end{prop}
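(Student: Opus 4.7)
The plan is to separate the two requirements of full support of finite type: density of $N\setminus\Fix(f)$, and finiteness of the number of components of both $\Fix(f)$ and its complement. Fix a non-trivial $f\in\Diff^\omega(N)$; everything to be shown is a statement about the fixed point set $\Fix(f)$ inside the compact connected real analytic manifold $N$.

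For full support, I would argue by the identity principle for real analytic maps on a connected manifold. Pick any local analytic chart; then the coordinate components of the displacement map $x\mapsto f(x)-x$ are real analytic functions. If $\Fix(f)$ contained a non-empty open subset $U$, these components would vanish identically on $U$, and hence, by analytic continuation along paths in the connected manifold $N$, vanish on all of $N$. That would force $f=\mathrm{id}$, contradicting the non-triviality of $f$. Consequently $\Fix(f)$ has empty interior, so its complement is dense.

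For the finite type condition, the key observation is that $\Fix(f)$ is a (globally defined) real analytic subvariety of $N$: it is locally cut out by the vanishing of the finitely many analytic displacement functions described above, and is therefore a closed real analytic subset of $N$. The main step is then to invoke the finiteness theorems of real analytic geometry. Specifically, any closed real analytic subset of a compact real analytic manifold is semi-analytic, and by \L ojasiewicz's triangulation theorem for semi-analytic subsets of compact analytic manifolds, one may choose a finite analytic triangulation of $N$ in which $\Fix(f)$ is a subcomplex. Since the triangulation is finite, both $\Fix(f)$ and $N\setminus\Fix(f)$ are realized as finite unions of open cells, and in particular have only finitely many connected components.

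The main (and only) point of substance is the appeal to the existence of a finite analytic triangulation of the pair $(N,\Fix(f))$; the density statement is elementary once one notes that $\Fix(f)$ is an analytic subset of a connected analytic manifold, and the finiteness statements for the complement reduce immediately to the triangulability result once it is applied to $\Fix(f)$. I do not expect any novel obstacle; the proof is really a citation of standard real analytic geometry, packaged so that the dynamical conclusion, full support of finite type, is visible.
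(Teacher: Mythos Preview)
Your argument is correct and follows essentially the same route as the paper: identify $\Fix(f)$ as a real analytic subset of the compact manifold $N$, use the identity principle to rule out interior, and then cite standard finiteness results from real analytic geometry. The only cosmetic difference is that the paper invokes Corollary~2.7 of Bierstone--Milman \cite{B-M} directly for the finite-component statements, whereas you route the same conclusion through \L ojasiewicz's triangulation theorem; these are equivalent standard facts about semianalytic sets.
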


\begin{proof}
 If $f$ is analytic and non-trivial the set $\Fix(f)$ 
is an analytic set in $N$ and has no interior and the set 
$N \setminus \Fix(f)$ is a semianalytic set.
Hence $\Fix(f)$ has finitely many components
 and  $N \setminus \Fix(f)$ has finitely many components 
(see Corollary 2.7 of \cite{B-M} for both these facts).
\end{proof}

The following result is due to Katok \cite{Katok2} who stated it only
in the analytic case. For completeness we give a proof in
section~\ref{sec:positive entropy} but the proof we give is
essentially the same as the analytic case.

\begin{prop}[Katok \cite{Katok2}]\label{prop cyclic}
Suppose $G$ is a subgroup of $\Diff^2(M)$ which has full support of
finite type and $f \in \Diff^2(M)$ has positive
topological entropy.  Then the centralizer of $f$
in $G$, is virtually cyclic.  Moreover, every infinite
order element of this centralizer has positive topological entropy.
\end{prop}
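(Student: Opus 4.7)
The plan is to apply Katok's horseshoe theorem and then analyze the derivative action of the centralizer at a hyperbolic periodic point. By the main result of \cite{Katok2}, positive topological entropy yields, for some $N\geq 1$, a compact $f^N$-invariant hyperbolic set $\Lambda$ on which $f^N|_\Lambda$ is topologically conjugate to a subshift of finite type of positive entropy. Fix a hyperbolic fixed point $p\in\Lambda$ of $f^N$ whose derivative $A:=Df^N(p)$ has distinct real eigenvalues $\lambda_s,\lambda_u$ with $0<|\lambda_s|<1<|\lambda_u|$, and which admits transverse homoclinic orbits. The centralizer $C:=\Cent^2(f,G)$ permutes the finite set $\Fix(f^N)$, so its stabilizer $C_p$ of $p$ has finite index; I will replace $C$ by $C_p$.

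For $g\in C_p$, the commutation $g f^N = f^N g$ together with $g(p)=p$ forces $Dg(p)$ to commute with $A$. Since $A$ has distinct real eigenvalues, its centralizer in $GL(T_pM)$ is the diagonal subgroup in the eigenbasis, so I obtain a homomorphism $\rho: C_p\to \R^*\times\R^*$, $g\mapsto(\mu_s(g),\mu_u(g))$. The goal is to show (i) $\rho$ is injective and (ii) its image lies in the cyclic subgroup generated by $\rho(f^N)=(\lambda_s,\lambda_u)$; together these give $C_p\subset\langle f^N\rangle$, so $C$ is virtually cyclic.

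Claim (i) uses Sternberg's smooth linearization of $f^N$ near $p$: in linearizing coordinates, any $g\in\ker\rho$ satisfies $gA=Ag$ and $Dg(p)=I$, and a standard power-series comparison (using the absence of low-order resonances between $\lambda_s$ and $\lambda_u$, arranged by choosing $p$ appropriately within the horseshoe) forces $g$ to be the identity near $p$; then $\Fix(g)$ has nonempty interior, and the full-support hypothesis on $G$ forces $g$ to be the identity globally. Claim (ii) is more delicate: smoothly linearizing $f^N|_{W^u(p)}$, the map $g|_{W^u(p)}$ becomes multiplication by $\mu_u(g)$, and $g$ sends the discrete set $S\subset W^u(p)\setminus\{p\}$ of transverse homoclinic positions to itself. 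Since $S$ is invariant under both $\lambda_u$- and $\mu_u(g)$-scaling, if $\log\mu_u(g)/\log\lambda_u$ were irrational then $\langle\lambda_u,\mu_u(g)\rangle$ would be dense in $\R_+^*$ and $S$ could not be discrete; hence $\mu_u(g)\in\lambda_u^\Q$. A refinement based on tracking how $g$ permutes individual $f^N$-orbits of homoclinic points, and comparing the $W^u$- and $W^s$-sides of a fixed transverse intersection, forces $\mu_u(g)=\lambda_u^k$ and $\mu_s(g)=\lambda_s^k$ for the same integer $k=k(g)$; injectivity from (i) then gives $g=f^{Nk}$. Finally, for the ``moreover'' part: any infinite-order $g\in C$ has $g^M\in C_p\subset\langle f^N\rangle$ for some $M\geq 1$, so $g^M=f^{Nk}$ with $k\neq 0$, whence $h_{\mathrm{top}}(g)=h_{\mathrm{top}}(g^M)/M=|Nk/M|\cdot h_{\mathrm{top}}(f)>0$.

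The main obstacle is claim (ii), specifically the refinement from $\mu_u(g)\in\lambda_u^\Q$ to $\mu_u(g)\in\lambda_u^\Z$ together with the agreement of the exponent on $W^s$. This is where the detailed combinatorial structure of the horseshoe is used most essentially, to rule out $g$ acting as a ``fractional iterate'' of $f^N$. Steps 1--3 are routine once Katok's theorem is invoked, and the injectivity in Step 4 is standard Sternberg theory; the cyclicity in Step 5 is the only place where the horseshoe structure, beyond mere existence of a hyperbolic saddle, really enters.
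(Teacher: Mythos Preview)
Your outline has the right shape---hyperbolic periodic point, derivative homomorphism, injectivity, discreteness---but several steps have genuine gaps, and the paper's proof handles each of them by a different (and simpler) mechanism.

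\textbf{Finite index.} You assert that $\Fix(f^N)$ is finite and that $C$ permutes it. The permutation claim is fine, but nothing in the hypotheses forces $\Fix(f^N)$ to be finite: $f$ is not assumed to lie in $G$, and even for elements of $G$ the finite-type hypothesis only bounds the number of \emph{components} of $\Fix$, not its cardinality. The paper instead observes that the $C$-orbit of $p$ consists of hyperbolic fixed points of $f^N$ all sharing the same eigenvalues; such points cannot accumulate, so the orbit is finite and the stabilizer has finite index.

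\textbf{Injectivity.} Your argument that $Dg(p)=I$ forces $g=\mathrm{id}$ near $p$ does not work in the $C^2$ category. There is no power series to compare, and two-dimensional Sternberg linearization at this regularity is unavailable; even in $C^\infty$ the centralizer of a linear hyperbolic map can contain nonlinear elements. The paper avoids this entirely: it applies one-dimensional Sternberg linearization (valid for $C^2$) separately on $W^s(p)$ and $W^u(p)$, concluding that $g$ restricted to each invariant manifold is multiplication by its eigenvalue. Thus $\phi(g)=1$ gives $W^s(p)\cup W^u(p)\subset\Fix(g)$. The homoclinic tangle $f^{-n}(J_s)\cup f^n(J_u)$ then has complement with arbitrarily many components, so $M\setminus\Fix(g)$ would need infinitely many components, contradicting the finite-type hypothesis. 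Note that it is the \emph{finite-type} part of the hypothesis, not merely full support, that is doing the work here.

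\textbf{Discreteness.} You aim for the strong conclusion $\rho(C_p)\subset\langle\rho(f^N)\rangle$ and correctly identify the passage from $\mu_u(g)\in\lambda_u^{\Q}$ to $\mu_u(g)\in\lambda_u^{\Z}$ as the obstacle; your sketch does not resolve it. The paper sidesteps this by proving only that the image of $\phi$ in $\R^+$ is discrete, which together with injectivity already gives cyclicity. The argument is short: if $\phi(g_n)\to 1$ then $g_n(q)\to q$ along $W^s(p)$; but each $g_n(q)$ lies in $W^s(p)\cap W^u(p)$, and transversality at $q$ isolates $q$ in this intersection, so $g_n(q)=q$ and hence $\phi(g_n)=1$ for large $n$.

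\textbf{The ``moreover'' clause.} Your entropy computation assumes the stronger conclusion $C_p\subset\langle f^N\rangle$, which you have not established. The paper instead notes that any nontrivial $h\in H$ has $\phi(h)\neq 1$ and $\phi'(h)\neq 1$, so $p$ is a hyperbolic saddle for $h$ with $q$ a transverse homoclinic point, and positive entropy follows from Smale's theorem.
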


A corollary of the proof of this result is the following.

\begin{cor}\label{dichotomy}
Suppose $f,g \in \Diff^2(M)$ have infinite order and full support of
finite type. If $fg = gf$, then $f$ has positive topological entropy
if and only if $g$ has positive entropy.
\end{cor}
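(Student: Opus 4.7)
The plan is to read the dichotomy out of the \emph{proof} of Proposition~\ref{prop cyclic} rather than its statement, as indicated by the phrase ``corollary of the proof.'' By symmetry it suffices to show one direction: if $f$ has positive topological entropy, then so does $g$. The other direction is the same argument with $f$ and $g$ interchanged, the hypotheses being symmetric.

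The proof of Proposition~\ref{prop cyclic} rests on Katok's horseshoe theorem for $C^{1+\alpha}$ surface diffeomorphisms of positive entropy: from $f$ one extracts a compact $f$-invariant hyperbolic Cantor set $\Lambda$ on which $f$ is topologically conjugate to a subshift of finite type of positive topological entropy. Any $h \in \Diff^2(M)$ commuting with $f$ preserves the intrinsic topological-dynamical structure of $f$ in a neighborhood of $\Lambda$, and hence induces a homeomorphism $h|_\Lambda$ commuting with $f|_\Lambda$, i.e. an element of the shift-centralizer of the subshift. In this symbolic setting, an element of the shift-centralizer that is not of uniformly bounded order already carries positive topological entropy; this is the input that yields both the ``virtually cyclic'' and the ``positive entropy of infinite-order elements'' conclusions of Proposition~\ref{prop cyclic}.

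To deduce Corollary~\ref{dichotomy}, apply this reduction once, with $h = g$. Full support of finite type for the single element $g$ rules out $g|_\Lambda = \mathrm{id}$: were $\Lambda \subset \Fix(g)$, the fact that $\Fix(g)$ has finitely many components while $\Lambda$ is a Cantor set of periodic points of $f$ would force $g$ to be the identity on an $f$-invariant open neighborhood, contradicting $g$ having infinite order together with $M \setminus \Fix(g)$ being dense. Combined with $g$ having infinite order in $\Diff^2(M)$, this shows $g|_\Lambda$ is a nontrivial shift-commuting homeomorphism of infinite order, hence has positive topological entropy, hence $g$ itself has positive entropy on $M$.

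The step I expect to be the main obstacle, and the reason this is a corollary of the proof rather than of the statement, is verifying that when Proposition~\ref{prop cyclic} is written out in Section~\ref{sec:positive entropy} the hypothesis ``full support of finite type'' on the containing group $G$ is used only element-by-element, applied to a single commuting diffeomorphism at a time. Once this is transparent, the argument above applies to $g$ alone, even though we do not know that $\langle f, g \rangle$ has full support of finite type, and the corollary follows.
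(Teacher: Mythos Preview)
Your final paragraph correctly identifies the strategy: go into the proof of Proposition~\ref{prop cyclic} and check that the hypothesis ``full support of finite type'' is applied to one commuting diffeomorphism at a time.  But the concrete mechanism you supply in the earlier paragraphs is \emph{not} what that proof does, and it has real gaps.

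The paper's proof of Proposition~\ref{prop cyclic} never invokes a full horseshoe $\Lambda$ or the automorphism group of a subshift.  From Katok's theorem it extracts only a single hyperbolic periodic point $p$ for $f$ together with a transverse homoclinic point $q$, and then works with the expansion factor homomorphism $\phi:\Cent^2_p(f)\to\R^+$ recording the eigenvalue of $Dh_p$ along $W^s(p,f)$.  The dichotomy that drives the ``moreover'' clause is: either $\phi(h)\ne 1$, in which case $p$ is a hyperbolic fixed point of $h$ with transverse homoclinic point $q$ and hence $h$ has positive entropy; or $\phi(h)=1$, in which case $W^s(p,f)\cup W^u(p,f)\subset\Fix(h)$, the homoclinic grid $K_n=f^{-n}(J_s)\cup f^n(J_u)$ lies in $\Fix(h)$ and has complement with arbitrarily many components, and full support of finite type for $h$ then forces $h=id$.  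For the corollary one applies this to the power of $g$ that lands in $\Cent^2_p(f)$; the $\langle g\rangle$-orbit of $p$ is finite because hyperbolic fixed points of $f$ with prescribed eigenvalues cannot accumulate.

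Your horseshoe route breaks down at two points.  First, nothing guarantees $g(\Lambda)=\Lambda$: the image $g(\Lambda)$ is another $f$-invariant hyperbolic set conjugate to $\Lambda$, but $f$ has many such sets and there is no canonical one that $g$ must respect.  (This is exactly why the paper works with a single periodic point rather than a Cantor set.)  Second, your argument that $\Lambda\subset\Fix(g)$ is impossible does not go through: a Cantor set contained in a closed set with finitely many connected components does not force that set to have interior --- a single component of $\Fix(g)$ could be a one-dimensional continuum containing all of $\Lambda$.  Finally, the assertion that an infinite-order element of the centralizer of a subshift of finite type automatically has positive topological entropy is not a standard fact and would itself require proof; the paper's expansion-factor argument avoids any such symbolic claim entirely.
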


 We remark that in contrast to our subsequent results
  Proposition~\ref{prop cyclic} and its corollary make no assumption
  about preservation of a measure $\mu.$

We can now state our main result,  whose proof is given in section~\ref{main section}.

\begin{thm}\label{main thm}
 Suppose $M$ is a compact oriented surface of genus $0$
and  $G$ is a subgroup
 of $\Symp^\infty_\mu(M)$ which has full support of finite type, 
e.g. a subgroup of $\Symp^\omega_\mu(M)$.  Suppose further that $G$ has an infinite normal solvable subgroup. Then $G$ is virtually abelian.
\end{thm}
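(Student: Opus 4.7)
The plan is to first extract an infinite normal abelian subgroup $A \triangleleft G$ from $H$, possibly after replacing $G$ by a finite-index subgroup, and then to split cases according to the topological entropy of a chosen infinite-order element $f \in A$. Extraction uses the derived series $H \supseteq H^{(1)} \supseteq \cdots \supseteq H^{(n)} = \{1\}$, whose terms are characteristic in $H$ and therefore normal in $G$; the last nontrivial term $H^{(n-1)}$ is abelian, and if it is infinite we set $A := H^{(n-1)}$. The cases where $H^{(n-1)}$ is finite, or where $A$ happens to be purely torsion, require some additional bookkeeping (replacing $G$ by the centralizer of a finite characteristic subgroup, and using that a large torsion abelian group cannot act faithfully and analytically on a genus $0$ surface while maintaining full support of finite type), but neither affects the virtually abelian conclusion.

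By Corollary~\ref{dichotomy} applied within the abelian subgroup $A$, either every infinite-order element of $A$ has positive topological entropy, or none does. In the \emph{positive entropy case}, Proposition~\ref{prop cyclic} gives that $\Cent(f,G)$ is virtually cyclic. Since $A$ is abelian, $A \subseteq \Cent(f,G)$, so $A$ itself is virtually infinite cyclic; in particular $\Aut(A)$ is finite. The conjugation action of $G$ on $A$, well-defined by normality of $A$, is then a homomorphism $G \to \Aut(A)$ with finite image, so its kernel $\Cent_G(A)$ has finite index in $G$ and is contained in the virtually cyclic group $\Cent(f,G)$. Hence $G$ is virtually cyclic, and a fortiori virtually abelian.

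The \emph{zero entropy case} is the main obstacle. The genus $0$ hypothesis must enter substantively here, since the analogous statement fails on surfaces of positive genus. The expected route is to attach to each infinite-order $h \in A$ a dynamical invariant—such as rotation-number data on the components of $M \setminus \Fix(h)$, kept combinatorially finite by the full-support-of-finite-type hypothesis and made meaningful by the zero entropy of $h$—and to use that every $g \in G$ acts on $A$ by conjugation through an automorphism that is compatible with this invariant structure on all elements of $A$ simultaneously. Rigidity results for zero-entropy analytic area-preserving maps on a genus $0$ surface, presumably established in the body of the paper, should then force the image of $G \to \Aut(A)$ to be finite and $\Cent_G(A)$ itself to be virtually abelian. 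Carrying out this rigidity analysis—especially controlling how commuting zero-entropy symplectomorphisms share their dynamical decompositions on $M$—is the technical heart of the theorem.
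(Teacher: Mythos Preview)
Your reduction to an infinite normal abelian subgroup and your positive-entropy argument are sound; the latter, via finiteness of $\Aut(A)$ for virtually cyclic abelian $A$, is a clean alternative to the paper's Proposition~\ref{A +entropy}, which instead computes entropies directly to show $gf^k g^{-1} = f^{\pm k}$ for all $g \in G$.

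The real gap is the zero-entropy case, which you acknowledge but do not prove. Two points. First, the paper requires a further split you do not make: once entropy is zero, either $A$ is a \emph{pseudo-rotation group} (every nontrivial element has exactly $\chi(M)$ fixed points) or $A$ contains a \emph{multi-rotational} element $f \in \zz(M)$ (infinite order, more than $\chi(M)$ periodic points, hence by Theorem~\ref{max-annuli} a nonconstant rotation-number function on some maximal invariant annulus). In the pseudo-rotation case (Proposition~\ref{A pseudo-r}) the paper embeds $A$ in $\T$ via mean rotation number on $M \setminus F$; since conjugation preserves mean rotation number, an index-two subgroup of $G$ centralizes $A$, and Lemma~\ref{2periodic} finishes. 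This part is close in spirit to your $\Aut(A)$ outline. Second, and more seriously, the multi-rotational case does \emph{not} go through $\Aut(A)$, and your framework gives no purchase there since $A$ can have large automorphism group. The paper instead argues geometrically: inside a maximal annulus $U \in \cA_f$ one selects nowhere-dense irrational level sets of $\rho_{f|_U}$, trims them (Lemma~\ref{breve}) so each is the common frontier of its two complementary components, and uses a measure-comparison argument exploiting normality of $A$ (Lemma~\ref{disjoint or equal}) to produce a finite-index $G_0 \le G$ preserving infinitely many disjoint open subannuli of $U$. Any commutator $h \in [G_0,G_0]$ then has mean rotation number zero on each such annulus, hence a fixed point in each by Lemma~\ref{int rho}, forcing $\Fix(h)$ to have infinitely many components and contradicting full support of finite type unless $h = id$ (Lemma~\ref{ending lemma}). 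This invariant-annulus mechanism is the substantive content your sketch is missing.
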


There are several interesting corollaries of this result. To motivate the first we recall the following result.

\begin{thm}[Farb-Shalen \cite{Farb-Shalen}]
Suppose $f \in \Df0^\omega(S^1)$ has 
infinite order.  Then the centralizer of $f$ in
$\Df0^\omega(S^1)$, is virtually abelian.
\end{thm}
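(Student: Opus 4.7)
The plan is to split into cases based on the rotation number $\rho(f) \in \R/\Z$.

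First I would handle the case when $\rho(f)$ is irrational. Since $f$ is analytic (hence $C^2$), Denjoy's theorem applies and $f$ is topologically conjugate to the rigid rotation $R_{\rho(f)}$. A standard density argument shows that the centralizer of an irrational rotation in $\Homeo_+(S^1)$ consists only of rotations, so after transport by the conjugacy the centralizer $\Cent^\omega(f)$ embeds into a subgroup of $S^1$. In particular it is abelian, and certainly virtually abelian.

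The substantive case is $\rho(f) = p/q \in \Q$. Then $f^q$ has rotation number $0$, so $\Fix(f^q)\neq \emptyset$; since $f$ has infinite order, $f^q$ is a nontrivial analytic diffeomorphism and $F := \Fix(f^q)$ is a finite (nonempty) set by Proposition~\ref{analytic}. Any $g \in \Cent^\omega(f)$ commutes with $f^q$ and therefore preserves $F$, giving a homomorphism $\Cent^\omega(f) \to \Sym(F)$ with finite image. Let $K$ be its kernel. Each element of $K$ fixes $F$ pointwise and hence preserves every component arc of $S^1 \setminus F$. On a closed component $\bar I$ of $S^1\setminus F$, $f^q|_{\bar I}$ is an analytic diffeomorphism fixing only the endpoints, and elements of $K|_{\bar I}$ are analytic diffeomorphisms of $\bar I$ (fixing its endpoints) that commute with $f^q|_{\bar I}$.

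The key step is to show $K$ is abelian. For this I would invoke the classical fact (via the Szekeres embedding, or Kopell's lemma together with the standard analysis of $C^2$ interval diffeomorphisms without interior fixed points) that the centralizer in $\Diff^2(\bar I)$ of an interval diffeomorphism with no fixed points in the interior is abelian, since every such centralizing element is determined by its one-jet at an endpoint and embeds into the unique $C^1$ Szekeres flow of $f^q|_{\bar I}$. Applied to each component arc $\bar I$, this tells us $K|_{\bar I}$ is abelian; since the restriction map $K \to \prod_I \Diff^\omega(\bar I)$ is injective, $K$ itself is abelian. Combined with the finiteness of $\Cent^\omega(f)/K$, this proves $\Cent^\omega(f)$ is virtually abelian.

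The main obstacle is the invocation of the one-dimensional centralizer rigidity result used on each interval $\bar I$; the rest is bookkeeping about the permutation action on the finite set $F$. Note that analyticity is essential precisely for the finiteness of $F$, which is what reduces the problem to finitely many Kopell-type pieces.
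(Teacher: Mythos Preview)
The paper does not prove this theorem at all: it is stated with attribution to Farb--Shalen \cite{Farb-Shalen} and used only as motivation for the analogous question on surfaces. There is no proof in the paper to compare your proposal against.

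That said, your outline is essentially the standard argument and appears correct. The irrational case is handled cleanly via Denjoy plus the elementary fact that the homeomorphism centralizer of an irrational rotation is the rotation group. In the rational case your reduction is right: analyticity forces $\Fix(f^q)$ to be finite and nonempty, the centralizer permutes this finite set, and the kernel $K$ acts on each complementary arc commuting with a $C^2$ (indeed $C^\omega$) interval diffeomorphism with no interior fixed points. The step you flag as the ``main obstacle'' is exactly where the content lies; the Szekeres/Kopell package does give that the $C^1$ centralizer of such an interval map is the (abelian) flow, so $K$ restricted to each arc is abelian, and injectivity of the product restriction map finishes it. One small point worth making explicit: on a given arc an element of $K$ might a priori have interior fixed points, and it is Kopell's lemma (not Szekeres alone) that rules this out and forces it to be the identity on that arc; you gesture at this but it is the place where $C^2$ regularity is genuinely used.
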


 There are a number of instances where results about the algebraic
structure of $\Diff(S^1)$ have close parallels for $\Symp_\mu(M)$.
By analogy with the result of Farb and Shalen above, 
it is natural to ask the following question.

\begin{question}
Suppose $M$ is a compact surface and 
 $G$ is a subgroup of $\Df0^2(M)$ with full support of finite type
and $f \in G$  has infinite order. Is the centralizer of $f$ in $G$
always virtually abelian?
\end{question}

The following result gives a partial answer to this question.  

\begin{prop}\label{prop: cent}  
Suppose $M$ is a compact surface of genus $0$, $H$ 
is a subgroup of $\Symp^\infty_\mu(M)$ with full support of 
finite type and $f \in H$  
has infinite order, then $\Cent^\infty_\mu(f, H)$, the centralizer of $f$
in $H,$ is virtually abelian.  In particular this applies 
when $H \subset \Symp_\mu^\omega(M)$.
\end{prop}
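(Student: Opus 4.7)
The plan is to obtain Proposition~\ref{prop: cent} as an essentially immediate consequence of Theorem~\ref{main thm}, by taking $G = \Cent^\infty_\mu(f,H)$ and exhibiting an infinite normal solvable subgroup inside it.

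First I would verify that $G := \Cent^\infty_\mu(f,H)$ itself satisfies the hypotheses of Theorem~\ref{main thm}. Since $G$ is a subgroup of $H \subset \Symp^\infty_\mu(M)$, every nontrivial element of $G$ is a nontrivial element of $H$, so by hypothesis it has full support of finite type. Thus $G$ has full support of finite type as a group.

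Next I would produce the required infinite normal solvable subgroup of $G$. The natural candidate is $\langle f \rangle$. By definition of the centralizer every element of $G$ commutes with $f$, so $\langle f \rangle$ lies in the center of $G$; in particular $\langle f \rangle$ is normal in $G$. Since $f$ has infinite order, $\langle f \rangle$ is an infinite cyclic group, hence abelian and therefore solvable. This gives an infinite normal solvable subgroup of $G$.

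With both hypotheses of Theorem~\ref{main thm} verified for $G$, we immediately conclude that $G = \Cent^\infty_\mu(f,H)$ is virtually abelian. The ``In particular'' clause about $H \subset \Symp^\omega_\mu(M)$ is then automatic from Proposition~\ref{analytic}, which guarantees that any subgroup of $\Diff^\omega(M)$ has full support of finite type. There is no real obstacle here: the content of the proposition is entirely contained in Theorem~\ref{main thm}, and the only step is recognizing $\langle f \rangle$ as a central, hence normal, infinite cyclic subgroup of the centralizer.
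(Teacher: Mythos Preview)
Your proposal is correct and follows exactly the same approach as the paper: apply Theorem~\ref{main thm} to $G = \Cent^\infty_\mu(f,H)$, using the infinite cyclic group $\langle f\rangle$ as the normal abelian (hence solvable) subgroup. Your write-up simply makes explicit the easy verifications (that $G$ inherits full support of finite type from $H$, and that $\langle f\rangle$ is central and therefore normal in $G$) which the paper leaves implicit.
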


\begin{proof}
If we apply Theorem~\ref{main thm} to the group $G = \Cent^\infty_\mu(f,H)$ and 
observe that the cyclic group generated by $f$ is a normal abelian subgroup,
then we conclude that $G$ is virtually abelian.
\end{proof}

 Question 1.9 of \cite{FH-ent0} asks if any finite index subgroup of
$\mcg(\Sigma_g)$ can act faithfully by area preserving diffeomorphisms
on a closed surface.  The following corollary of
Proposition~\ref{prop: cent} and Proposition~\ref{analytic} gives a
partial answer in the special case of analytic diffeomorphisms and $M$
of genus $0$.

\begin{cor}
Suppose $H$ is a finite index subgroup of $\mcg(\Sigma_g)$ with $g \ge 2$
or $\Out(F_n)$ with $n \ge 3$, and $G$ is a group
with the property that every element of infinite order has 
a virtually abelian centralizer.
Then any homomorphism $\nu: H \to G$ has
non-trivial kernel. In particular this holds  if $G = \Symp^\omega_\mu(S^2).$

\end{cor}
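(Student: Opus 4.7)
The plan is to prove the contrapositive. Suppose $\nu: H \to G$ is injective; we will derive a contradiction. It suffices to exhibit an element $\alpha \in H$ of infinite order whose centralizer $\Cent_H(\alpha)$ is not virtually abelian: then $\nu(\alpha) \in G$ has infinite order, $\nu$ restricts to an injection $\Cent_H(\alpha) \hookrightarrow \Cent_G(\nu(\alpha))$, and since any subgroup of a virtually abelian group is virtually abelian, this contradicts the hypothesis on $G$.

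When $H$ has finite index in $\mcg(\Sigma_g)$ with $g \ge 2$, take a non-separating simple closed curve $\gamma \subset \Sigma_g$ with Dehn twist $T_\gamma$, and choose $k \ge 1$ so that $\alpha := T_\gamma^k \in H$. The centralizer of $T_\gamma^k$ in $\mcg(\Sigma_g)$ agrees with that of $T_\gamma$, and it contains the image, via extension by the identity across $\gamma$, of the mapping class group $\mcg(\Sigma_{g-1,2})$ of the cut surface. Since $g \ge 2$, this cut surface has negative Euler characteristic, so its mapping class group contains non-abelian free subgroups (for instance those generated by sufficiently high powers of two independent pseudo-Anosov classes on $\Sigma_{g-1,2}$). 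This non-virtual-abelianness persists upon intersection with the finite-index subgroup $H$.

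When $H$ has finite index in $\Out(F_n)$ with $n \ge 4$, fix a basis $x_1, \ldots, x_n$ of $F_n$, let $\tau \in \Out(F_n)$ be the class of the Nielsen transvection $x_1 \mapsto x_1 x_2$, $x_i \mapsto x_i$ for $i \ne 1$, and pick $k \ge 1$ with $\alpha := \tau^k \in H$. Any automorphism of $F_n$ supported on the free factor $\langle x_3, \ldots, x_n \rangle$ commutes with $\tau$ in $\Aut(F_n)$, and the triviality of the centralizer of $\{x_1, x_2\}$ in $F_n$ upgrades this construction to an injection $\Aut(F_{n-2}) \hookrightarrow \Cent_{\Out(F_n)}(\tau)$; for $n-2 \ge 2$ the group $\Aut(F_{n-2})$ contains non-abelian free subgroups and so is not virtually abelian. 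The case $n = 3$ requires a different input since $\Aut(F_1)$ is finite: realize $F_3 = \pi_1(\Sigma_{1,2})$ and apply the Dehn--Nielsen--Baer theorem to get an injection $\mcg(\Sigma_{1,2}) \hookrightarrow \Out(F_3)$; take $\alpha$ to be a power of a Dehn twist along a curve separating $\Sigma_{1,2}$ into a once-punctured torus $\Sigma_{1,1}$ and a thrice-punctured sphere. The centralizer of this twist in $\mcg(\Sigma_{1,2})$ contains the image of $\mcg(\Sigma_{1,1}, \partial)$, which surjects onto $SL_2(\Z)$ and is therefore not virtually abelian.

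Finally, the ``in particular'' statement follows because Proposition~\ref{analytic} ensures $G = \Symp^\omega_\mu(S^2)$ has full support of finite type, so Proposition~\ref{prop: cent} applied with $H = G$ tells us that every infinite-order element of $G$ has a virtually abelian centralizer, verifying the hypothesis. The main technical subtlety is the $\Out(F_3)$ case, which requires invoking the surface embedding since the naive transvection argument degenerates; the remaining cases reduce to elementary facts about centralizers of Dehn twists or of Nielsen transvections.
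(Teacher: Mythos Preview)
Your argument is correct in outline and reaches the same conclusion, but it proceeds quite differently from the paper's proof, especially in the $\Out(F_n)$ case.

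For $\mcg(\Sigma_g)$ the two proofs are close in spirit: both exhibit a Dehn twist whose centralizer in $H$ is not virtually abelian. The paper does this by hand, choosing three curves $\alpha_1,\alpha_2,\alpha_3$ with $\alpha_1,\alpha_2$ disjoint from $\alpha_3$ but meeting each other once, so that suitable powers $T_1,T_2$ lie in $\Cent_H(T_3)$ and generate a group no finite-index subgroup of which is abelian. Your version instead invokes the structural fact that the centralizer of a Dehn twist contains the mapping class group of the cut surface, together with the existence of free subgroups there. Both are fine; the paper's is more self-contained, yours appeals to standard background.

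For $\Out(F_n)$ the approaches diverge more substantially. The paper gives a single explicit construction valid for all $n\ge 3$: with generators $A,B,C$ it defines automorphisms $\Phi,\Psi,\Theta$ (supported on $B$ and $C$ respectively) such that $\Phi$ commutes with $\Psi$ and $\Theta$, while $[\Theta^k,\Psi^k]$ is checked directly to be non-inner for every $k$. This avoids any case split and any appeal to Dehn--Nielsen--Baer or to facts about which groups contain free subgroups. Your route embeds $\Aut(F_{n-2})$ in $\Cent_{\Out(F_n)}(\tau)$ for $n\ge 4$ and falls back on a surface realization for $n=3$. The latter works, but the bookkeeping with punctures versus boundary (you write $\mcg(\Sigma_{1,1},\partial)$ after having identified $\pi_1(\Sigma_{1,2})$ with $F_3$ via Dehn--Nielsen--Baer, which really wants punctures) would need to be tightened in a final write-up; a cleaner choice is to cut along a non-separating curve and use that the complement is a four-punctured sphere. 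The paper's explicit computation sidesteps all of this at the cost of a short verification of commutation and non-commutation in $\Aut(F_n)$.
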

\begin{proof}  
Suppose at first that $H$ is a finite index subgroup of
$\mcg(\Sigma_g)$ with $g \ge 2$.  Let $\alpha_1, \alpha_2$ and
$\alpha_3$ be simple closed curves in $M$ such that $\alpha_1$ and
$\alpha_2$ are disjoint from $\alpha_3$ and intersect each other
transversely in exactly one point.  Let $T_i$ be a Dehn twist around
$\alpha_i$ with degree chosen so that $T_i \in H$.  Then $T_1$ and
$T_2$ commute with $T_3$ but no finite index subgroup of the group
generated by $T_1$ and $T_2$ is abelian.  It follows that $\mu$ cannot
be injective.

Suppose now that $H$ is a finite index subgroup of $\Out(F_n)$ with
$n\ge 3$.  Let $A,B$ and $C$ be three of the generators of $F_n$.
Define $\Phi \in \Aut(F_n)$ by $B \mapsto BA$, define $\Psi \in
\Aut(F_n)$ by $C \mapsto CBAB^{-1}A$ and define $\Theta \in \Aut(F_n)$
by $C \mapsto CABAB^{-1}$, where $\Phi$ fixes all generators other
than $B$ and where $\Psi $ and $\Theta$ fix all generators other than
$C$.  Let $\phi, \psi$ and $\theta$ be the elements of $\Out(F_n)$
determined by $\Phi, \Psi$ and $\Theta$ respectively.  It is
straightforward to check that $\Phi$ commutes with $\Psi$ and $\Theta$
and that for all $k > 0$, $\Psi^k$ (which is defined by $C \mapsto
C(BAB^{-1}A)^k$) does not commute with $\Theta^k$ (which is defined by
$C \mapsto C(ABAB^{-1})^k$).  Moreover, $[\Theta^k,\Psi^k]$ is not a
non-trivial inner automorphism because it fixes both $A$ and $B$.  It
follows that $[\theta^k,\psi^k]$ is non-trivial for all $k$ and hence
that no finite index subgroup of the group generated by $\psi$ and
$\theta$, and hence no finite index subgroup of $\Cent(\phi)$, 
 the centralizer of $\phi$ in $\Out(F_n),$ is
abelian.  The proof now concludes as above.
\end{proof}

Another interesting consequence of Theorem~\ref{main thm}  is the
following result.  

\begin{prop}\label{prop: solv}
Suppose $M$ is a compact surface of genus $0$ and  $G$ 
is a solvable subgroup of $\Symp^\infty_\mu(M)$
with full support of 
finite type, then $G$ is virtually abelian.
\end{prop}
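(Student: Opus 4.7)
The plan is to derive this directly from Theorem~\ref{main thm} by observing that a solvable group $G$ is itself a normal solvable subgroup of itself, so the only substantive issue is whether $G$ is infinite (so that Theorem~\ref{main thm} applies) or finite (in which case the conclusion is automatic).

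First I would dispose of the trivial case: if $G$ is finite, then the trivial subgroup $\{1\}$ has finite index in $G$, so $G$ is virtually abelian by definition. Otherwise $G$ is infinite, and since $G$ is solvable and normal in itself, $G$ is an infinite normal solvable subgroup of $G$. The hypothesis on $G$ (being a subgroup of $\Symp^\infty_\mu(M)$ with full support of finite type on a genus $0$ surface) is exactly the hypothesis of Theorem~\ref{main thm}, so that theorem applies with the normal solvable subgroup taken to be $G$ itself, yielding that $G$ is virtually abelian.

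There is essentially no obstacle here since Theorem~\ref{main thm} has already done all the work. The only point worth flagging is that one should be careful to include the finite case separately, since strictly speaking the hypothesis of Theorem~\ref{main thm} asks for an \emph{infinite} normal solvable subgroup; this is a cosmetic concern that is handled in one line.
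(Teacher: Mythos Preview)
Your argument is correct and matches the paper's approach: the paper states this proposition explicitly as an ``interesting consequence of Theorem~\ref{main thm}'' and gives no separate proof, so the intended derivation is exactly the one you wrote---take $G$ itself as the infinite normal solvable subgroup (handling the finite case trivially).
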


The {\em Tits alternative} (see J. Tits \cite{Tits})
is satisfied by a group $G$ if every
subgroup (or by some definitions every finitely generated subgroup) 
of $G$ is either virtually solvable or contains a
non-abelian free group. This is a deep property known for
finitely generated linear groups, mapping class groups  \cite{Iv1}, \cite{mccarthy:tits}, and the outer automorphism group of a free group   \cite{bfh:tits1} \cite{bfh:tits2}. It is an important open question
for $\Diff^\omega(S^1)$ (see \cite{Farb-Shalen}). It is known that 
this property is not satisfied for $\Diff^\infty(S^1)$ (again
see \cite{Farb-Shalen}).

This naturally raises the question for surfaces.

\begin{conj}[{\bf Tits alternative}]\label{tits} 
If $M$ is a compact surface
then every finitely generated subgroup of $\Symp_\mu^\omega(M)$
is either virtually solvable or contains a non-abelian free group.
\end{conj}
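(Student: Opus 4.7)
The natural plan is to dichotomize on the existence of an infinite normal solvable subgroup. Let $G$ be a finitely generated subgroup of $\Symp^\omega_\mu(M)$. If $G$ contains an infinite normal solvable subgroup, then Proposition~\ref{analytic} ensures $G$ has full support of finite type and Theorem~\ref{main thm} forces $G$ to be virtually abelian, hence virtually solvable. The remaining and genuinely hard case is that every normal solvable subgroup of $G$ is finite; here the goal is to exhibit two elements of $G$ that freely generate a non-abelian free subgroup.

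I would first split the free-subgroup case on topological entropy. If some $f \in G$ has positive entropy, Proposition~\ref{prop cyclic} forces the centralizer of $f$ in $G$ to be virtually cyclic, so there is $g \in G$ no power of which commutes with a power of $f$. The strategy is a dynamical ping-pong: Katok's work \cite{Katok2} gives topological horseshoes for $f$ with stable/unstable branches, and one tries to choose large iterates $f^N$ and $h^N$, where $h$ is built from $g$ and its conjugates by high powers of $f$, whose attracting and repelling regions on $M$ are pairwise disjoint. Standard ping-pong then produces a free subgroup on two generators.

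If every element of $G$ has zero entropy, one invokes the much more rigid structure of zero-entropy area-preserving analytic surface diffeomorphisms on a genus-zero surface: rotation vectors on invariant annuli, fixed-point index constraints, and a Birkhoff-type decomposition of $M$ into invariant disks and annuli. The plan is to show that either the action of $G$ on some invariant annular piece factors through a non-elementary group of circle homeomorphisms — from which a free subgroup is extracted by a rotation-number ping-pong — or $G$ admits a non-trivial normal abelian subgroup, contradicting our standing hypothesis and collapsing the zero-entropy case into the first dichotomy.

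The main obstacle in both branches is promoting soft dynamical data (entropy, rotation numbers, periodic-point structure) into the combinatorial disjointness required for ping pong. The zero-entropy branch is the more serious obstruction: without a normal solvable subgroup to lean on, one must rule out the possibility that every pair of elements of $G$ shares a common invariant circle or a common rotation direction, and this is precisely the kind of difficulty that keeps the parallel Tits alternative for $\Diff^\omega(S^1)$ open (see \cite{Farb-Shalen}). For this reason I would expect the realistic target of the paper to be a special case — say, when $G$ already contains an element of positive topological entropy, or when the action is subject to an additional fixed-point or invariant-set hypothesis — rather than the full conjecture.
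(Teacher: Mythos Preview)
The statement you are addressing is stated in the paper as a \emph{conjecture}, not a theorem; there is no proof of it in the paper. The paper explicitly leaves the full Tits alternative open and proves only the special case recorded as Theorem~\ref{tits special case}: if $G\subset\Symp^\omega_\mu(M)$ (with $M$ compact oriented genus zero) contains an infinite order element of entropy zero with at least three periodic points, i.e.\ an element of $\cZ(M)$, then $G$ either contains a nonabelian free subgroup or is virtually abelian. Your closing paragraph correctly anticipates that only a special case is within reach, but your guess of \emph{which} special case is off: the paper does not treat the positive-entropy branch at all, and its actual result lives entirely inside the zero-entropy regime.

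It is also worth comparing your sketch of the zero-entropy branch with what the paper does. You propose to factor the action through a circle and run a rotation-number ping-pong there, or else produce a normal abelian subgroup. The paper's argument is different and more direct: starting from $f\in\cZ(M)$ and a maximal annulus $U\in\cA_f$, it looks at the trimmed level sets of $\rho_{f|_U}$. Either some finite-index subgroup of $G$ preserves infinitely many of them, in which case Lemma~\ref{ending lemma}/Lemma~\ref{abelian case} forces that subgroup to be abelian; or there exist $h_0\in G$ and a trimmed level set $C$ with $h_0(C)\cap C\neq\emptyset$ and $h_0(C)\neq C$. Setting $g=h_0 f h_0^{-1}$, the two trimmed level sets $C$ and $h_0(C)$ cross in the sense of Lemma~\ref{free case}, and a ping-pong carried out on nested $f$- and $g$-invariant annuli (Lemma~\ref{trans num} supplies the stretching) yields that large powers of $f$ and $g$ generate a free group. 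No passage to a circle action and no normal-subgroup dichotomy is used; the ping-pong table is built from the level-set geometry of a single element of $\cZ(M)$ and one conjugate.
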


In the final section of this paper we are able to use the techniques developed
to prove an important special case of this conjecture.

\begin{thm}\label{tits special case}
Suppose that $M$ is a compact oriented genus zero surface, that $G$ is a subgroup of $\Symp^\omega_\mu(M)$ and that $G$ contains
an infinite order element with entropy zero and at least three
periodic points. Then either $G$ contains a subgroup isomorphic to
$F_2$, the free group on two generators, or $G$ has an abelian subgroup 
of finite index.
\end{thm}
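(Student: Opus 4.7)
The plan is to use the centralizer of $f$ as a pivot: either it already has finite index in $G$ (making $G$ virtually abelian), or enough room remains outside it to run a ping-pong argument and produce a copy of $F_2$.

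Let $f \in G$ denote the given element and set $C = \Cent^\omega_\mu(f, G)$. By Proposition~\ref{prop: cent}, $C$ is virtually abelian, so if $[G:C] < \infty$ then $G$ itself is virtually abelian and the second alternative holds. Assume therefore $[G:C] = \infty$; our goal is to exhibit a rank-two free subgroup. The next step is to attach to $f$ a canonical invariant essential structure $\mathcal{Y} \subset M$ playing the role of a rotation axis. After replacing $f$ by a power, we may assume $\Fix(f)$ contains at least three points; applying the Thurston--Nielsen classification to the mapping class induced by $f$ on $M \smallsetminus \Fix(f)$, and using that zero entropy forbids a pseudo-Anosov component, yields a finite-order or reducible normal form. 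In either case one extracts a canonical $f$-invariant essential object $\mathcal{Y}$---for instance an invariant essential multicurve, or an invariant annular region on which $f$ acts by an irrational rotation. Canonicity identifies the stabilizer $\Stab_G(\mathcal{Y})$ with $C$ up to finite index, so the $G$-orbit of $\mathcal{Y}$ is infinite.

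From this infinite orbit we choose $g_1, g_2 \in G$ whose translates $g_i\mathcal{Y}$ are in general position relative to $\mathcal{Y}$ and to each other. Each conjugate $g_i f g_i^{-1}$ has entropy zero, is area preserving, and preserves $g_i\mathcal{Y}$, and a dynamical analysis shows that for sufficiently large $N$ the six sets $g_i^{\pm N}\mathcal{Y}$ $(i=1,2)$ are pairwise disjoint and each $g_i^{\pm N}$ carries the complement of a small neighborhood of $g_i^{\mp N}\mathcal{Y}$ into a neighborhood of $g_i^{\pm N}\mathcal{Y}$. The Klein ping-pong lemma then yields $\langle g_1^N, g_2^N\rangle \cong F_2$, completing the second case.

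The hardest step is establishing the attractor/repeller picture on which the ping-pong depends. In the zero-entropy setting there are no hyperbolic expansion or contraction estimates to invoke, so the required dynamics must be extracted from purely topological rigidity of the canonical structure $\mathcal{Y}$ together with the area-preserving, analytic, genus zero hypotheses. These ingredients should together prevent a non-stabilizing element from repeatedly returning $\mathcal{Y}$ close to itself, and it is here that the dynamical machinery developed earlier in the paper for analytic area preserving sphere diffeomorphisms is expected to do the essential work.
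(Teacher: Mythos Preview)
Your proposal has a genuine gap at its core: the ping-pong scheme you describe is incompatible with area preservation. You ask that for large $N$ each $g_i^{\pm N}$ carry the complement of a small neighborhood of one copy of $\mathcal{Y}$ into a small neighborhood of another. That is an attractor/repeller picture, and it simply cannot occur for elements of $\Symp_\mu^\omega(M)$: the image of a full-measure set under a $\mu$-preserving diffeomorphism is again a full-measure set, so it can never be squeezed into a neighborhood of small measure. Zero-entropy, area-preserving surface maps have no contracting or expanding regions, and this is precisely why the ``hardest step'' you flag cannot be filled in along the lines you sketch. A secondary but related problem is that your ping-pong players are the $g_i^{N}$, arbitrary elements of $G$, rather than conjugates of $f$; nothing ties the dynamics of $g_i$ to the structure $\mathcal{Y}$, whereas it is the rotational dynamics of $f$ (and its conjugates) that one actually controls. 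Finally, the identification of $\Stab_G(\mathcal{Y})$ with $C$ up to finite index is asserted but not justified: centralizing $f$ forces one to preserve its canonical invariants, but preserving $\mathcal{Y}$ does not force commutation with $f$.

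The paper's argument avoids these obstacles by running a ping-pong of a different flavor. The invariant structure used is a \emph{trimmed level set} $C_1$ of the rotation-number function on a maximal annulus $U\in\cA_f$. The dichotomy is not about $[G:\Cent(f)]$ but about whether some finite-index subgroup preserves infinitely many such level sets (giving infinitely many invariant annuli and hence, via Lemma~\ref{ending lemma}, an abelian finite-index subgroup). If not, one finds $h_0\in G$ with $h_0(C_1)\cap C_1$ nonempty and proper, and sets $C_2=h_0(C_1)$, $g=h_0 f h_0^{-1}$. The players are then $f^n$ and $g^n$: each preserves a thin annulus $A_i$ around $C_i$ on which its rotation interval is nontrivial, and the ping-pong is the \emph{stretching} of transverse arcs around these annuli (Lemma~\ref{trans num}), not contraction onto them. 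An arc crossing $A_1$ is, under $f^n$, forced to wrap so as to contain a subarc crossing $A_2$; under $g^n$ that subarc in turn produces one crossing $A_1$; and so on. This is fully compatible with area preservation and is what substitutes for the hyperbolic North--South dynamics you were hoping for.
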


We observe that one cannot expect the virtually abelian (as opposed
to solvable) conclusion to hold more generally as the subgroup
of $\Symp^\omega_\mu(\T^2)$ generated by an ergodic translation
and the automorphism with matrix
\[
\begin{pmatrix}
1 &  1\\
0 &  1\\
\end{pmatrix}
\]
is solvable, but not virtually abelian.

\section{Maximal Annuli for elements of 
{$\Symp^\infty_\mu(M).$}}

\begin{defn}\label{defn: annular comp}
 Suppose $M$ is a compact oriented surface. 
 The {\em annular compactification} of an open annulus $U \subset M$
is obtained by blowup on an end whose frontier is a single point
and by the prime end compactification otherwise.  We will denote
it by $U_c$ (see 2.7 \cite{FH-ent0} for details). 
\end{defn}

 For any diffeomorphism $h$ of $M$ which 
leaves $U$ invariant there is a canonical extension to a
homeomorphism $h_c: U_c \to U_c$ which is functorial in the
sense that $(hg)_c = h_c g_c$.

\begin{defn}
Suppose $M$ is a compact genus zero surface,
$f \in \Symp^\infty_\mu(M)$,  and that the number
of periodic points of $f$ is greater than the Euler
characteristic of $M$.
 If $f$ has infinite order and entropy $0$,
we will call it a {\em multi-rotational
diffeomorphism.}  This set of diffeomorphisms 
will be denoted $\zz(M)$.
\end{defn}

 The rationale for the terminology {\em multi-rotational}
comes from the 
following result which is a distillation of several results
from \cite{FH-ent0} (see Theorems 1.2, 1.4 and 1.5 from that paper).
In particular, if $f \in \zz(M)$ then
every point of $\Int(M) \setminus  \Fix(f)$ has a well defined rotation number
(with respect to any pair of components from $\partial M \cup \Fix(f)$ and
there are non-trivial intervals of rotation numbers.

\begin{thm}\label{max-annuli}
 Suppose $M$ is a compact genus zero surface and
$f \in \zz(M)$.
The collection $\cA = \cA_f$ of maximal $f$-invariant open annuli in 
$\Int(M) \setminus \Fix(f)$ satisfies the following properties:
\begin{enumerate}
\item The elements of $\cA$ are pairwise disjoint.
\item The union $\displaystyle{\bigcup_{U\in \cA}U}$ is a full measure
open subset of {$M \setminus \Fix(f)$.}
\item \label{item:U is essential} Each $U \in \cA$
is essential in $\Int(M) \setminus \Fix(f).$

\item \label{item:rho non-constant} 
For each $U \in \cA$, the rotation number $\rho_f: U_c \to S^1$ is continuous and
non-constant.  Each component of the level set of $\rho_f$
which is disjoint from $\partial U_c$ is essential in $U$,
i.e. separates $U_c$ into two components, each containing a component of  $\partial U_c$.
\end{enumerate}
\end{thm}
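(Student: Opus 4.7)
The plan is to assemble $\cA$ and its properties by drawing on the structure theory developed in \cite{FH-ent0}, verifying each of the four listed items either as a direct citation of Theorems 1.2, 1.4, 1.5 of that paper or as a short topological consequence. I would define $\cA$ as the poset of $f$-invariant open annuli in $\Int(M) \setminus \Fix(f)$, ordered by inclusion, and first establish that maximal elements exist via Zorn's Lemma. The chain condition is the one thing that needs checking: a nested increasing union of $f$-invariant open annuli in a genus zero surface is an open connected $f$-invariant subsurface, and cannot be a disk because any $f$-invariant open disk in $\Int(M) \setminus \Fix(f)$ would, by the Brouwer plane translation theorem together with area preservation, force a fixed point of $f$ inside.

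For (1), I would argue that if $U, V \in \cA$ intersect, then $U \cup V$ is an $f$-invariant open connected subset of $\Int(M) \setminus \Fix(f)$ which, by the same disk-exclusion argument, must be an annulus (using that the genus of $M$ is zero, so the possible topological types of connected open subsurfaces are restricted). Maximality then forces $U = V$. For (3), essentiality is immediate once one observes that if $U$ were inessential in $\Int(M) \setminus \Fix(f)$, it would bound a disk component in the complement, which is again excluded by Brouwer plus area preservation; alternatively, this is stated directly in \cite{FH-ent0}.

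Item (2) is the analytic heart of the theorem, and I would quote Theorem 1.2 of \cite{FH-ent0}: the hypothesis $f \in \zz(M)$ (entropy zero and more periodic points than the Euler characteristic allows for by Lefschetz) gives a prime-end-type rotation number defined on a full measure set, and each such point lies in an invariant annulus on which the rotation is continuous. This is precisely the content needed. Item (4) likewise follows from Theorem 1.5 of \cite{FH-ent0}: in each maximal annulus $U$, the extension of $f$ to $U_c$ is a surface homeomorphism with no wandering intervals on the ends, so its rotation number function is continuous in the Birkhoff sense; non-constancy and the essential separation property of level sets come from the multi-rotational hypothesis together with the fact that the two ends of $U_c$ carry distinct rotation numbers (otherwise maximality would allow $U$ to be enlarged).

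The only point that is not a direct translation from \cite{FH-ent0} is the pairwise disjointness claim (1); this is the main obstacle, since two maximal invariant annuli could a priori intersect in a complicated way and their union could fail to be an annulus if $M$ had higher genus. The genus zero hypothesis and the disk-exclusion argument above are precisely what makes the union-of-two-annuli argument work, and this is where I expect to spend the bulk of the topological care.
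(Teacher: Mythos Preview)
The paper does not actually prove this theorem: it is stated as ``a distillation of several results from \cite{FH-ent0} (see Theorems 1.2, 1.4 and 1.5 from that paper)'' and no further argument is given. Your treatment of items (2), (3) and (4) amounts to the same thing---you cite the same theorems from \cite{FH-ent0}---so in that respect your proposal matches the paper's approach exactly.

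Where you go beyond the paper is in attempting a self-contained argument for existence (via Zorn) and for item (1), and here there is a genuine gap. Your Zorn step is fine: an increasing nested union of open annuli is a disk or an annulus, since every compact subsurface of the union sits inside one of the annuli. But your argument for disjointness is not: you claim that if two maximal $f$-invariant annuli $U,V$ meet, then $U\cup V$ ``by the same disk-exclusion argument, must be an annulus (using that the genus of $M$ is zero, so the possible topological types of connected open subsurfaces are restricted).'' Genus zero does not restrict the topology to disk-or-annulus---a connected open planar surface can have any number of ends. Concretely, in a four-punctured sphere take $U$ a wide annular neighborhood of a circle separating punctures $\{a,b\}$ from $\{c,d\}$ and $V$ a wide annular neighborhood of a circle separating $\{a,c\}$ from $\{b,d\}$; then $U\cup V$ is a four-holed sphere, not an annulus, and no Brouwer-type disk exclusion rules this out. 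Your argument uses only $f$-invariance of $U\cup V$ plus topology, and that is not enough. The disjointness in \cite{FH-ent0} comes from the explicit construction of the maximal annuli out of the rotation-number structure for entropy-zero maps, not from an abstract maximality argument; if you want a self-contained proof of (1) you will need to invoke that structure (essentially, that the rotation numbers measured in $U$ and in $V$ must be compatible on the overlap, which forces the cores to be parallel), not just the planar topology of $M$.
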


We will make repeated use of the properties in the following straightforward
corollary.

\begin{cor}\label{Z centralizer}
Suppose $M$ is a compact  genus zero surface,
$f \in \zz(M)$
and $\Cent^\infty_\mu(f)$ denotes its centralizer in $\Symp^\infty_\mu(M)$. 
\begin{enumerate}
\item \label{item: cA periodic}
If $g \in \Cent^\infty_\mu(f)$ and $U\in \cA_f$ then $g(U) \in \cA_f$ and the
$\Cent^\infty_\mu(f)$-orbit of $U$ is finite.  
\item \label{item: inv level} If $U \in \cA_f$ then any $g \in \Cent^\infty_\mu(f)$ which satisfies 
$g(U) = U$ must preserve the components of each level set of $\rho_f: U_c \to S^1$.
\item If $f$ has support of finite type then the 
set $\cA_f$ of maximal annuli for $f$ is finite.
\end{enumerate}
\end{cor}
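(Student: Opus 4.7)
The plan is to derive each claim from Theorem \ref{max-annuli} together with the observation that any $g \in \Cent^\infty_\mu(f)$ is a measure-preserving, orientation-preserving diffeomorphism of $M$ that satisfies $g(\Fix(f)) = \Fix(f)$.

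For part (1), I would first show that $g(U) \in \cA_f$. Commutation with $f$ gives $f(g(U)) = g(f(U)) = g(U)$, so $g(U)$ is $f$-invariant; since $g(\Fix(f)) = \Fix(f)$, the image $g(U)$ lies in $\Int(M) \setminus \Fix(f)$; and since $g$ is a homeomorphism, $g(U)$ is an open annulus. Maximality transfers: any $f$-invariant open annulus in $\Int(M) \setminus \Fix(f)$ strictly containing $g(U)$ would pull back under $g^{-1}$ to a strictly larger $f$-invariant open annulus containing $U$, contradicting maximality of $U$. For finiteness of the $\Cent^\infty_\mu(f)$-orbit, the elements of the orbit are pairwise disjoint by Theorem \ref{max-annuli}(1) and all have the same positive $\mu$-measure since $g$ is measure-preserving; as $\mu(M) < \infty$, only finitely many such sets can coexist.

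For part (2), I would invoke functoriality of the annular compactification to get $g_c f_c = f_c g_c$ on $U_c$, and then use the characterization of the rotation number as an asymptotic rotation rate to conclude that $\rho_f \circ g_c = \pm \rho_f$, with sign $+$ if $g_c$ preserves each boundary component of $U_c$ and $-$ if it swaps them (these are the only possibilities because $g$ is orientation-preserving, so $g_c$ is an orientation-preserving self-homeomorphism of the annulus $U_c$). Hence $g_c$ carries each level set $\rho_f^{-1}(\alpha)$ onto the level set $\rho_f^{-1}(\pm \alpha)$ and, being a homeomorphism, sends connected components to connected components.

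For part (3), the plan is combinatorial. Since $f$ has support of finite type, $M \setminus \Fix(f)$ has only finitely many components, so it suffices to bound $\cA_f \cap W$ for each such component $W$. Each $W$ is a connected planar open surface whose frontier in $M$ lies in $\Fix(f) \cup \partial M$, so $W$ has finitely many ends and is of finite topological type. The key observations are: (i) in such a $W$, any family of pairwise disjoint, pairwise non-isotopic essential simple closed curves is finite; and (ii) by maximality, no two distinct elements of $\cA_f \cap W$ can have cores that are isotopic in $W$, since otherwise the sub-annulus spanned by them inside $W$ would be a strictly larger $f$-invariant open annulus in $\Int(M) \setminus \Fix(f)$. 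Combining these, the cores of the elements of $\cA_f \cap W$ represent a finite set of isotopy classes with at most one representative each, so $\cA_f \cap W$ is finite. I expect the main technical obstacle to be justifying (ii): one must produce the spanning sub-annulus inside $\Int(M) \setminus \Fix(f)$ and verify that it is $f$-invariant, which requires careful topological bookkeeping around the frontiers of the two annuli.
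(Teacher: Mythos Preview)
Your arguments for (1) and (3) are correct.  For (1) they match the paper's essentially verbatim.  For (3) the paper also begins from the (unjustified) assertion that distinct elements of $\cA_f$ are mutually non-parallel in $\Int(M)\setminus\Fix(f)$, but then bounds their number by a different combinatorial route: it takes core curves $E$ for a finite subset of $\cA_f$, observes via Theorem~\ref{max-annuli}(\ref{item:U is essential}) that each component of $M\setminus E$ either contains a component of $\Fix(f)\cup\partial M$ or has negative Euler characteristic, and then runs an Euler-characteristic count on the dual tree to bound $|E|$ uniformly.  Your finite-type argument inside each component $W$ of $M\setminus\Fix(f)$ is a legitimate alternative and perhaps more direct; the obstacle you flag in~(ii) is precisely the step the paper leaves implicit.

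For (2), however, there is a genuine gap.  The conclusion, as it is used later in the paper (see Lemma~\ref{2periodic} and Lemma~\ref{disjoint or equal}), is that \emph{each individual} component of \emph{each} level set is $g$-invariant, not merely that $g$ sends components to components.  You have established only the latter.  The paper supplies the missing step with an area argument: by Theorem~\ref{max-annuli}(\ref{item:rho non-constant}) an interior component $C$ of a level set separates $U_c$ into two pieces; if $g(C)\ne C$ then $g(C)$ lies strictly to one side of $C$, so one piece of $U_c\setminus g(C)$ strictly contains the corresponding piece of $U_c\setminus C$, contradicting $\mu$-preservation.  Your explicit distinction of the cases $\rho_f\circ g_c=\pm\rho_f$ is more careful than the paper (which tacitly assumes $g_c$ preserves the ends of $U_c$), but having raised the minus-sign case you must then dispose of it: if $g_c$ swaps the ends then $g_c$ does not even carry a generic level set to itself, so either that case must be ruled out or an additional argument supplied.
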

\begin{proof}
If $g \in \Cent^\infty_\mu(f)$ then $g(\Fix(f)) = \Fix(f)$
so $\Int(M) \setminus \Fix(f)$ is $g$-invariant.  Also
$f(g(U)) = g(f(U)) = g(U)$ so $g(U)$ is $f$-invariant.
Clearly $U$ is a maximal $f$-invariant annulus in 
$\Int(M) \setminus \Fix(f)$ if and only if $g(U)$ is.
This proves $g(U) \in \cA_f.$  The $\Cent^\infty_\mu(f)$-orbit of $U$
consists of pairwise disjoint annuli each with the same
positive measure.  There can only be finitely many of them
in $M.$  This proves (1).

To show (2) we observe that $g$ is a topological conjugacy from $f$ to
itself and rotation number is a conjugacy invariant.  Hence $g$ must
permute the components of the level sets of $\rho_f$.  Clearly those
level sets which contain a component of $\partial U_c$ must be
preserved. Since any other such component separates $U$, if it were
not $g$-invariant the fact that $g$ is area preserving would be
contradicted.

Since the elements of $\cA_f$ are maximal $f$-invariant annuli in
$\Int(M) \setminus \Fix(f)$ , they are mutually non-parallel in
$\Int(M) \setminus \Fix(f)$ .  Let $E$ be the union of a set of core
curves for some given finite subset of $\cA_f$.
Theorem~\ref{max-annuli}-\pref{item:U is essential} implies that each
component of {$M \setminus E$} either contains a component of $\Fix(f)
\cup \partial M$ or has negative Euler characteristic. 
The dual graph for $E$ is a tree that has one edge for each 
element of $E$ and has at
most as many valence one and valence two vertices as 
there are components of $\Fix(f)
\cup \partial M$.  Since the Euler characteristic of the tree
is $1$ there is a uniform bound on the number of its edges.
It follows that the cardinality of $E$, and hence
the number of elements of $\cA_f$, is uniformly bounded.  This proves
(3).
\end{proof}

\section{The positive entropy case.} \label{sec:positive entropy}

\begin{lemma}\label{exp factor}
Suppose that $M$ is a closed surface, that 
$f\in \Diff^2(M)$ and that $g$ commutes with $f$.
Suppose further that
$f$ has a hyperbolic fixed point $p$ of saddle type,
and that $g$ fixes $p$ and 
preserves the branches of $W^s(p,f)$. Then there is a $C^1$
coordinate function $t$ on $W^s(p,f)$ and a 
unique number $\alpha>0$ such that in 
these coordinates $g(t) = \alpha t.$  In particular $\alpha$ is
an eigenvalue of $Dg_p.$
\end{lemma}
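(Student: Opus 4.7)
My strategy is to linearize $f$ along the invariant one-manifold $W^s(p,f)$ and then exploit the commutation $fg = gf$ to force $g$ to act linearly in the same coordinates.

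First I would restrict $f$ to $W^s(p,f)$. Since $p$ is a hyperbolic saddle and $f$ is $C^2$, the stable manifold theorem gives $W^s(p,f)$ as a $C^2$ injectively immersed one-manifold on which $f$ acts as a $C^2$ contraction fixing $p$, with multiplier $\lambda := Df_p|_{E^s_p}$, $0 < |\lambda| < 1$. Because $\dim W^s(p,f) = 1$, the Sternberg resonance condition is vacuous, so the classical Koenigs--Sternberg linearization theorem produces a $C^1$ coordinate $t$ on $W^s(p,f)$ with $t(p) = 0$ in which $f|_{W^s(p,f)}$ is the linear map $m_\lambda\colon t \mapsto \lambda t$. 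Concretely, one may take $t = \lim_{n\to\infty} \lambda^{-n}\, s \circ f^n$ for any initial local $C^2$ coordinate $s$ near $p$; the limit converges in $C^1$ on $W^s(p,f)$.

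Next I would restrict $g$ to $W^s(p,f)$. Since $g$ commutes with $f$ and fixes $p$, it permutes the stable sets of fixed points of $f$ and hence preserves $W^s(p,f)$; in the coordinate $t$ it becomes a $C^1$ diffeomorphism $\tilde g\colon \R \to \R$ fixing $0$ and satisfying $\tilde g(\lambda t) = \lambda \tilde g(t)$. Differentiating this identity gives $\tilde g'(\lambda t) = \tilde g'(t)$, which iterates to $\tilde g'(\lambda^n t) = \tilde g'(t)$ for all $n$. Letting $n \to \infty$ and using continuity of $\tilde g'$ yields $\tilde g'(t) \equiv \tilde g'(0) =: \alpha$, so $\tilde g(t) = \alpha t$. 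The hypothesis that $g$ preserves each branch of $W^s(p,f)$ translates to $\tilde g\{t > 0\} = \{t > 0\}$, forcing $\alpha > 0$, and the linear form of $\tilde g$ makes $\alpha$ unique. Finally, $T_p W^s(p,f) = E^s_p$ is a $Dg_p$-invariant line, and, read through the isomorphism $dt|_p$, the restriction of $Dg_p$ to it is multiplication by $\tilde g'(0) = \alpha$, so $\alpha$ is an eigenvalue of $Dg_p$.

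The main obstacle is the $C^1$ linearization of $f$ along $W^s(p,f)$: one must know that a one-dimensional $C^2$ contraction near a hyperbolic fixed point is $C^1$ conjugate to its linear part. This is classical and follows from the $C^1$ convergence of $\lambda^{-n} f^n$; once it is in hand, the remaining steps are short manipulations of a one-variable functional equation.
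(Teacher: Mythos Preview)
Your proof is correct and follows essentially the same approach as the paper: linearize $f|_{W^s(p,f)}$ via Sternberg's theorem to get $f(t)=\lambda t$, then use the commutation relation to obtain $\tilde g'(\lambda^n t)=\tilde g'(t)$ and pass to the limit. You supply a bit more detail than the paper (the explicit Koenigs limit for $t$, the justification of $\alpha>0$ from branch preservation, and the identification of $\alpha$ as the eigenvalue on $E^s_p$), but the argument is the same.
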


\begin{proof}
By Sternberg linearization (see Theorem 2 of \cite{Sternberg})
there is a $C^1$ coordinate function $t(x)$ on
$W^s(p,f)$ in which the restriction of $f$ (also denoted $f$)
satisfies $f(t) = \lambda t$ where $\lambda \in (0,1)$ is
the eigenvalue of $Df_p$ corresponding to the eigenvector
tangent to $W^s(p,f)$.

In these coordinates $g(t) = \lambda^{-n} g( \lambda^n t).$ Applying
the chain rule gives $g'(t) = g'( \lambda^n t).$ Letting $n$ tend to
infinity we get $g'(t) = g'(0)$ so $g'(x)$ is constant and $g(t) =
\alpha t$ where $\alpha = g'(0).$
\end{proof}

\begin{defn} \label{preserve branches}
Suppose that $M$ is a compact surface and that $f \in \Diff^r(M)$  has a
hyperbolic fixed point $p$ of saddle type.   We define $\Cent^r_p(f)$ to be
the subgroup of elements of $\Diff^r(M)$ which commute with $f$, fix
the point $p,$ and preserve the branches of $W^s(f,p).$ Let $\R^+$
denote the multiplicative group of positive elements of $\R$.  The
{\em expansion factor homomorphism}
\[
\phi: \Cent^r_p(f) \to \R^+,
\] 
is defined by $\phi(g) = \alpha$ where $\alpha$ is the unique number
given by Lemma~(\ref{exp factor}) for which $g(x) = \alpha x$ in
$C^1$ coordinates on $W^s(f,p).$  It is immediate that $\phi$ is
actually a homomorphism.  We also observe that $\phi(g)$ is
just the eigenvalue of $Dg_p$ whose eigenvector is tangent to 
 $W^s(f,p).$
\end{defn}

The following result is due to Katok \cite{Katok2} who stated
it only in the analytic case, but gave a proof very similar to the one
below.
\bigskip

\noindent {\bf Proposition~\ref{prop cyclic}} ([Katok \cite{Katok2}]). {\em
Suppose $G$ is a subgroup of $\Diff^2(M)$ which has full support of
finite type and $f \in \Diff^2(M)$ has positive
topological entropy.  Then the centralizer of $f$
in $G$, \  $\Cent^2(f,G)$, is virtually cyclic.  Moreover, every infinite
order element of $\Cent^2(f,G)$ has positive topological entropy.}

\begin{proof}  
By a result of Katok \cite{katok:horseshoe} there is a hyperbolic periodic
point $p$ for $f$ of saddle type with a transverse homoclinic point $q$.  
Let  $\phi: \Cent^2_p(f) \to \R^+$ be the expansion factor homomorphism of  Definition~\ref{preserve branches} and  let   $ H =\Cent^2_p(f) \cap G $.    Then $H$ has finite index in $\Cent^2(f,G)$
because otherwise the {$\Cent^2(f,G)$} orbit of $p$ is 
infinite and consists
of hyperbolic fixed points of $f$ all with the same eigenvalues.
This is impossible because a limit point would be a non-isolated
hyperbolic fixed point.  For the main statement of the proposition, it suffices to show that $H$ is cyclic and for this it  suffices to  show that the restriction    $\phi|_H: H \to \R^+$  is injective and has a discrete image.

To show that $\phi|_H$ is injective it suffices  to show that
  if $g \in H$ and $\phi(g) = 1$ then 
$g = id.$   But $\phi(g) = 1$ implies $W^s(p,f) \subset \Fix(g)$.
Note that if $\phi': \Cent^2_p(f^{-1}) \to \R^+$  is the expansion homomorphism for $f^{-1}$ then $\phi(g)\phi'(g) = det(Dg_p) = 1$ so we also
know that $W^u(p,f) \subset \Fix(g)$. Hence we need only show that 
this implies $g = id$.  Let $J_s$ be the interval in
$W^s(p,f)$ joining $p$ to $q$ and define $J_u \subset W^u(p,f)$
analogously.  Define $K_n = f^{-n}(J_s) \cup f^{n}(J_u)$.  Then the
number of components of the complement of $K_n$ tends to $+\infty$
with $n.$  Since $g$ has full support of finite type,  each component $V$ of the complement of $K_n$ contains a component of $M \setminus \Fix(g)$, in contradiction to the fact that $M \setminus \Fix(g)$ has only finitely many components.   We conclude that $\phi$ is injective.

To show that $\phi|_H$ has discrete image we assume that
there is a sequence $\{g_n\}$ of elements of $H$ such that
$\lim \phi(g_n) = 1$, and show that $\phi(g_n) = 1$ for $n$
sufficiently large.  
 Let $I_s = f^{-1}(J_s)$ and $I_u = f(J_u)$. Since $I_s$ and
$I_u$ are compact  intervals and the point $q$ is a point in the interior
of each where they intersect transversely, there is a neighborhood
$U$ of $q$ such that $U \cap I_s \cap I_u =\{q\}.$ But for $n$
sufficiently large $g_n(q) \in U$ and $g_n(q) \in (I_s \cap I_u).$ It
follows that $g_n(q) =q$ for $n$ sufficiently large
 and hence $\phi(g_n) = 1.$  This proves
that the image of $\phi_H$ is discrete and hence that $H$ is cyclic.

 Each non-trivial element $h \in H$ has $\phi(h) \ne 1$ 
and $\phi'(h) \ne 1$.  Hence $p$ is a hyperbolic fixed point of $h$
and $q$ is a transverse homoclinic point for $h$.  It follows that
$h$ has positive entropy {(see Theorem 5.5 of \cite{smale:DDS}).}
\end{proof}

\begin{prop}\label{A +entropy}
Suppose $G$ is a  subgroup of
$\Diff^2(M^2)$ which has full support of finite type and $A$ is an
abelian normal subgroup of $G$.  If there is an element $f \in A$ with
positive topological entropy then $G$ is virtually cyclic.
\end{prop}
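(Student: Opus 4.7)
The plan is to use Proposition~\ref{prop cyclic} as the main input and then leverage normality of $A$ in $G$ together with the very rigid structure of a virtually cyclic abelian group.

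First, since $G$ has full support of finite type and $f\in A \subset G$ has positive topological entropy, Proposition~\ref{prop cyclic} applies and tells us that $C:=\Cent^2(f,G)$ is virtually cyclic, and moreover every infinite order element of $C$ has positive entropy. Because $A$ is abelian and $f\in A$, every element of $A$ commutes with $f$, so $A\subset C$. Subgroups of virtually cyclic groups are virtually cyclic (intersect with a cyclic finite index subgroup), so $A$ itself is virtually cyclic. Since $f$ has positive entropy it is of infinite order, so $A$ is infinite; being infinite, virtually cyclic and abelian, $A$ is necessarily finitely generated of the form $A\cong \Z\times T$ with $T$ a finite torsion group.

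Second, use normality: conjugation defines a homomorphism
\[
\Phi: G\longrightarrow \Aut(A), \qquad \Phi(g)(a)=gag^{-1},
\]
whose kernel is the centralizer $\Cent(A,G)$ of $A$ in $G$. Since $f\in A$, we have $\Cent(A,G)\subset \Cent(f,G)=C$, so $\ker\Phi$ is a subgroup of a virtually cyclic group and is therefore virtually cyclic.

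Third, observe that $\Aut(A)$ is finite. Any automorphism of $A=\Z\times T$ preserves the (characteristic) torsion subgroup $T$, restricting to an element of the finite group $\Aut(T)$; it induces $\pm 1$ on the quotient $A/T\cong\Z$; and it is then determined by the choice of image of a $\Z$-generator modulo $T$, which is one of finitely many elements of $T$. So $|\Aut(A)|\le 2\,|T|\,|\Aut(T)|<\infty$.

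Combining these, $[G:\ker\Phi]\le|\Aut(A)|<\infty$, so $\ker\Phi$ has finite index in $G$. Since $\ker\Phi$ is itself virtually cyclic, $G$ is a finite extension of a virtually cyclic group, hence virtually cyclic, as required. There is no serious obstacle here; the only thing that needs care is the structural claim that an infinite abelian virtually cyclic group has the form $\Z\times T$ with $T$ finite, which is a standard consequence of virtual cyclicity forcing finite generation and rank one.
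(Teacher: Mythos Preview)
Your proof is correct and takes a different route from the paper's. Both arguments start by applying Proposition~\ref{prop cyclic} to conclude that $A$ is virtually cyclic, but they diverge in how they pass from $A$ to $G$. The paper fixes a power $f^k$ generating a finite index subgroup of $A$, and for each $g\in G$ writes $gf^kg^{-1}=f^m$; it then invokes the dynamical fact that topological entropy is a conjugacy invariant to force $|m|=|k|$, yielding an index-two subgroup of $G$ centralizing $f^k$, to which Proposition~\ref{prop cyclic} applies again. Your argument replaces this entropy computation by the purely algebraic observation that $A\cong\Z\times T$ has finite automorphism group, so the conjugation map $G\to\Aut(A)$ has finite image and its kernel (contained in $\Cent^2(f,G)$) is already virtually cyclic. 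Your approach is more elementary and would work for any abstract group with an abelian normal subgroup of the shape $\Z\times T$, while the paper's argument stays closer to the dynamical theme and makes explicit use of the positive-entropy hypothesis at the key step.
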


\begin{proof}
It follows from Proposition~\ref{prop cyclic} that the group $A$ is virtually cyclic.  Since $f \in $ has positive entropy it is infinite order and generates a finite index subgroup $A_f$ of $A$. Hence there exists a positive integer $k$ such that $a^k \in A_f$ for all $a \in A$.  In particular, for each $g \in G$,    we have $g f^k  g^{-1} = (g f g^{-1})^k  = f^m$ for some $m \in \Z$ .   Since $f$ and $g f g^{-1}$ are conjugate,
the topological entropy $\ent(f) = \ent(g f g^{-1})$. Hence
\[
\ent(g f g^{-1})^{k} = |k| \ent(g f g^{-1})  = |k| \ent(f),
\text{ and  }\ent(f^{m}) = |m| \ent(f).
\]
We conclude that $m = \pm k$ and hence that $g f^k g^{-1} = f^{\pm k}$ for all $g \in G$.  Let $G_0$ be the subgroup of index at most two of $G$ such that $g f^kg^{-1} = f^k$ for all $g \in G_0$.  Then $G_0 \subset \Cent^2(f^k,G)$ and Proposition~\ref{prop cyclic} completes the proof. 
\end{proof}

\section{Mean Rotation Numbers}

In this section we record some facts (mostly well known)
 about rotation numbers for homeomorphisms of area preserving
homeomorphisms of the closed annulus.  In subsequent sections
we will want to apply these results when $M$ is a surface,
$f \in \Symp^\infty_\mu(M)$, and $U$ is an
$f$-invariant {\em open} annulus.  We will do this by considering
the extension of $f$ to the closed annulus $U_c$ which is the annular
compactification of $U$.  When the annulus $U$ is understood,
we will use $\rho_f(x)$ to mean the rotation number of $x \in U$ with
respect to the homeomorphism $f_c: U_c \to U_c$ of the closed
annulus $U_c$.

\begin{defn} Suppose that $f:\A \to \A$ is a homeomorphism of a closed
annulus $\A = S^1 \times [0,1]$ preserving a measure $\mu$.   For each lift $\ti f$, let $\Delta_{\ti f}( x) = p_1(\ti f(\ti x)) - p_1(\ti x)$ where $\ti x$ is a lift of $x$ and $p_1: \ti A   = \ti R \times [0,1] \to \R$ is projection onto the first factor.  
As reflected in the notation, $\Delta_{\ti f}(x)$ is independent
of the choice of lift $\ti x$ and hence may be considered
as being defined on $\A.$

If $X \subset \A$ is an $f$-invariant $\mu$-measurable
set then the {\em mean translation number relative to} $X$ 
 and $\ti f$ is defined to be
\[
\cT_{\mu}(\ti f, X) = \int_X \Delta_{\ti f}(x) \ d\mu.
\]
We define the {\em mean rotation number relative to} $X$, $\rho_\mu(f,X)$
to be the coset of $\cT_{\mu}(\ti f, X)$ mod $\Z$ thought of as an element of $\T = \R/\Z.$
\end{defn}

The mean rotation number is independent of the 
choice of lift $\ti f$ since different
lifts give values of $\cT(\ti f, X)$ differing by an element of $\Z$.

 We define the {\em translation number} of $x$ with
respect to $\ti f$ (see e.g. Definition~2.1 of \cite{FH-ent0}), by 
\[
\tau_{\ti f}(x) = \lim_{n \to \infty} \frac{1}{n}\Delta_{\ti f^n}(x).
\]
A straightforward application of 
the Birkhoff ergodic theorem implies that $\tau_{\ti f}(x)$ is
well defined for almost all $x$ and 
\[
\cT_\mu(\ti f, X) = \int_{X} \tau_{\ti f}(x)  \ d\mu.
\]

If $X$ is also $g$-invariant and $g$ preserves $\mu$ then
\[
\Delta_{\ti f \ti g}(x) = \Delta_{\ti g}(x) + \Delta_{\ti f}(\ti g(x))
\]
and hence integrating we obtain
\[
\cT_\mu(\ti f \ti g, X) = \cT_\mu(\ti f,X) + \cT_\mu(\ti g,X) \text{ and }
\rho_\mu(fg, X) = \rho_\mu(f,X) + \rho_\mu(g,X),
\]
\i.e. $f \mapsto \rho_\mu( f, X)$ is a homomorphism from the 
group of $\mu$ preserving homeomorphisms of $\A$ to
$\T = \R/\Z.$ 
Hence if $h = [g_1,g_2]$
for some $g_i : \A \to \A$ that preserve $\mu$ then $\rho_\mu(h) = 0.$

 \begin{lemma}\label{int rho}
Let $f: \A \to \A$ be an area preserving homeomorphism of a closed 
annulus isotopic to the
identity.  Suppose $X \subset \A$ is an $f$-invariant subset with
Lebesgue measure $\mu(X) > 0.$  If $\rho_\mu(f,X) = 0$
then $f$ has a fixed point in the interior of $\A$.
\end{lemma}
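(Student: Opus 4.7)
The plan is to apply a fixed-point theorem of Franks: if $F$ is an area-preserving homeomorphism of the closed annulus isotopic to the identity, $\tilde F$ is a lift, and there exists an $F$-invariant Borel probability measure $\nu$ with $\int \tau_{\tilde F}\,d\nu = 0$, then $\tilde F$ has a fixed point in the interior of $\R \times [0,1]$, which projects to an interior fixed point of $F$.

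First, I would normalize the given invariant set to a probability measure. Set $\nu := \mu|_X / \mu(X)$, which is an $f$-invariant Borel probability measure concentrated on $X$, and hence on $\Int(\A)$ up to a set of measure zero since Lebesgue measure does not charge the boundary. Directly from the definitions, $\int \tau_{\tilde f}\,d\nu = \cT_\mu(\tilde f,X)/\mu(X)$; moreover, if $T$ denotes the generator of the deck group of $\R \times [0,1] \to \A$, then replacing $\tilde f$ by $T^k \circ \tilde f$ shifts this integral by the integer $k$. The hypothesis $\rho_\mu(f,X) = 0$ therefore amounts to $\int \tau_{\tilde f}\,d\nu \in \Z$, so for a suitable choice of lift I may arrange $\int \tau_{\tilde f}\,d\nu = 0$ exactly. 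Applying Franks's theorem to this lift and this $\nu$ produces the desired interior fixed point of $f$.

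The main obstacle, should one wish to reprove Franks's theorem rather than cite it, lies in the subcase where $\tau_{\tilde f}$ vanishes $\nu$-almost everywhere. In that subcase, one applies Atkinson's recurrence theorem to the mean-zero integrable cocycle $\Delta_{\tilde f}$ over an ergodic component of $\nu$ to conclude that for $\nu$-almost every $x$ there are times $n_k \to \infty$ with $\Delta_{\tilde f^{n_k}}(x) \to 0$, and then combines this with Poincar\'e recurrence $f^{n_k}(x) \to x$ to obtain a non-wandering orbit of $\tilde f$ in $\R \times (0,1)$; a Brouwer-type argument then forces an interior fixed point. The complementary subcase, in which $\tau_{\tilde f}$ takes values of both signs on sets of positive $\nu$-measure, is handled by the intermediate-rotation-number form of the Poincar\'e--Birkhoff theorem: generic Birkhoff points produce $x,y$ with $\tau_{\tilde f}(x) < 0 < \tau_{\tilde f}(y)$, which forces a fixed point of $\tilde f$ with translation number in between.
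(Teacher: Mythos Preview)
Your approach is essentially the same as the paper's: normalize to an interior-supported invariant measure, choose a lift with vanishing mean translation, and split according to whether the pointwise translation number $\tau_{\tilde f}$ vanishes or takes both signs, invoking a Brouwer/Atkinson-type argument in the first case and a Poincar\'e--Birkhoff/returning-disk theorem of Franks in the second. The paper carries out exactly this two-case argument directly (citing Proposition~2.4 of \cite{FH-ent0} and Theorem~2.1 of \cite{F-Poincare}), whereas you package it as a single black-box ``Franks theorem'' and then sketch that same two-case proof; the only cosmetic difference is that the paper's dichotomy is ``$\tau_{\tilde f}=0$ on a set of positive measure vs.\ not'' rather than your ``$\tau_{\tilde f}=0$ a.e.\ vs.\ both signs,'' but under the zero-mean hypothesis these partitions are equivalent.
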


\begin{proof}
Replacing $X$ with $X \cap \Int(\A)$ we may assume $X \subset \Int(\A).$
If $\tau_{\ti f}(x) = 0$ on a positive measure subset of $X$ then the
fixed point exists by Proposition~2.4 of \cite{FH-ent0}.  
Otherwise for some $\epsilon >0$ 
there is a positive measure $f$-invariant subset $X^+$ on which $\rho_f > \epsilon$ and
another $X^-$ on which $\rho_f < -\epsilon$.  If $x \in X^+$ is recurrent
and not fixed then it is contained in a positively recurring disk 
which is disjoint from its $f$-image.
Similarly if $y \in X^-$ is recurrent
and not fixed then it is contained in a  negatively recurring disk
which is disjoint from its $f$-image.
Theorem 2.1 of \cite{F-Poincare}  applied to $f$ on the open annulus $\Int(\A)$ 
then implies there is a fixed point in $\Int(\A).$ 
\end{proof}

\begin{lemma}\label{lem: periodic}
 Suppose that $f: \A \to \A$ is an area preserving homeomorphism 
of a closed annulus which is
is isotopic to the identity.  Suppose also that $V \subset \A$ is
a connected open set which is inessential in $\A$ and such that 
$f^m(V) = V$ for some $m \ne 0$. Then there is a full measure subset $W$
of $V$ on which $\rho_f$ assumes a constant rational value.
\end{lemma}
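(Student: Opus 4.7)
The plan is to pass to the universal cover of $\A$ and exploit the fact that an inessential periodic open set lifts to a compact-in-the-horizontal-direction invariant set for a suitable lift of $f^m$.

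First I would set up the lifts. Let $p: \ti{\A} = \R \times [0,1] \to \A$ be the universal covering with generating deck transformation $T(s,t) = (s+1,t)$, and let $\ti f: \ti{\A} \to \ti{\A}$ be any lift of $f$. Since $V$ is connected, open, and inessential in $\A$, each component of $p^{-1}(V)$ maps homeomorphically onto $V$; fix one such component $\ti V$, and note that $p_1(\ti V) \subset \R$ is bounded. Because $\ti f^m$ is a lift of $f^m$ and $f^m(V)=V$, the lift $\ti f^m$ permutes the components of $p^{-1}(V)$, and since it commutes with $T$ the permutation acts as a single translation: there is an integer $k$ (independent of the component) with $\ti f^m(\ti V) = T^k(\ti V)$. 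Put $\ti g = T^{-k}\ti f^m$, so $\ti g$ is a lift of $f^m$ with $\ti g(\ti V) = \ti V$.

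Next I would control the translation number on $V$. For $\mu$-a.e.\ $x \in V$ the translation number $\tau_{\ti f}(x)$ exists by Birkhoff, hence so does $\tau_{\ti g}(x) = \tau_{\ti f^m}(x) - k = m\,\tau_{\ti f}(x) - k$. By the Poincar\'e recurrence theorem, $\mu$-a.e.\ point $x \in V$ is recurrent under $f^m$. For such a recurrent $x$, choose $\ti x \in \ti V$ over $x$; the whole $\ti g$-orbit of $\ti x$ lies in $\ti V$, so $\{p_1(\ti g^n(\ti x))\}_{n \ge 0}$ is a bounded subset of $\R$. Therefore
\[
\tau_{\ti g}(x) \;=\; \lim_{n\to\infty} \frac{p_1(\ti g^n(\ti x)) - p_1(\ti x)}{n} \;=\; 0.
\]
Combining, $m\,\tau_{\ti f}(x) = k$ for $\mu$-a.e.\ $x$ in the full-measure set $W \subset V$ of recurrent points where the limit exists. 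Thus $\rho_f(x) = \tau_{\ti f}(x) \bmod \Z = k/m$ on $W$, a constant rational value.

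I do not expect a serious obstacle beyond bookkeeping; the only point requiring care is verifying that $\ti f^m$ acts as a single translation on the components of $p^{-1}(V)$, which follows because $\ti f^m$ commutes with the $\Z$-action by $T$ and the components are permuted freely and transitively by $\langle T\rangle$. Everything else is a direct combination of Poincar\'e recurrence with the boundedness of $\ti V$ in the horizontal direction, which is exactly the content of $V$ being inessential.
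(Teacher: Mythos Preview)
Your argument contains one genuine gap: the assertion that $p_1(\ti V)\subset\R$ is bounded does not follow from $V$ being inessential. An inessential connected open set can have a horizontally unbounded lift. For a concrete example, in $\ti\A=\R\times[0,1]$ take $\ti V=\{(s,t):t\in(0,1),\ |s-\tan(\pi t/2)|<\epsilon\}$ for small $\epsilon$; the translates $T^n(\ti V)$ are pairwise disjoint, so $V:=p(\ti V)$ is a simply connected (hence inessential) open strip spiralling toward $S^1\times\{1\}$, yet $p_1(\ti V)$ is unbounded. Thus the sentence ``the whole $\ti g$-orbit of $\ti x$ lies in $\ti V$, so $\{p_1(\ti g^n(\ti x))\}_{n\ge 0}$ is bounded'' is not justified.

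The repair is immediate with the ingredients you already have. Since $p|_{\ti V}:\ti V\to V$ is a homeomorphism, its inverse is continuous; for a recurrent $x$ pick $n_j\to\infty$ with $f^{mn_j}(x)\to x$ in $V$, and then $\ti g^{\,n_j}(\ti x)=(p|_{\ti V})^{-1}\bigl(f^{mn_j}(x)\bigr)\to\ti x$ in $\ti V$. Hence $p_1(\ti g^{\,n_j}(\ti x))\to p_1(\ti x)$, and since $\tau_{\ti g}(x)$ exists a.e.\ by Birkhoff you may compute it along this subsequence to get $\tau_{\ti g}(x)=0$, whence $\tau_{\ti f}(x)=k/m$ as you wanted. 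This is precisely how the paper handles the unbounded case.

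With this correction your route is in fact leaner than the paper's. The paper first replaces $V$ by an $f^m$-invariant open \emph{disk} $D\supset V$ (using two lemmas from \cite{FH-ent0} on complementary components of inessential sets), invokes the Brouwer plane translation theorem to produce a fixed point in $D$, and only then obtains a lift preserving $\ti D$ and splits into the bounded/unbounded cases. You bypass the disk enlargement and Brouwer altogether by observing directly that some $T^{-k}\ti f^m$ preserves the chosen component $\ti V$; the cost is that $k$ is not known to be zero, but that is harmless since the conclusion only demands a constant rational value, which you exhibit as $k/m$.
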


\begin{proof}
Replacing $V$ with $V \cap \Int(\A)$ we may assume $V \subset
\Int(\A)$.   Replacing $f$ with $f^m$ it will suffice
to show that $f(V) = V$ implies $\rho_f =0$ on a full measure subset
of $V$. Since $V$ does not contain a simple closed curve that
separates the components of $\partial \A$, both components of
$\partial \A$ belong to the same component $X$ of $\A \setminus V$ by
Lemma 3.1 of \cite{FH-ent0}.   Note that $D := \A \setminus X$ 
contains $V$, is contained in $\Int(\A)$ and is
$f$-invariant.  Lemma 3.2 of \cite{FH-ent0} implies that $D$ is
connected and it is simply connected since its complement $X$  is connected.
Hence $D$ is an open disk.

By the Brouwer plane translation theorem $D$ contains a point of
$\Fix(f)$. Let $\ti \A$ be the universal cover of $\A$ and let $\ti D
\subset \ti \A$ be an open disk which is a lift of $D.$ Since $f$ has a
fixed point in $D$ there is a lift $\ti f: \ti \A \to \ti \A$ which has
a fixed point in $\ti D.$ Therefore $\ti f(\ti D) = \ti D.$ If
$p_1(\ti D)$ is bounded then it is obvious that $\rho_f$ is constant
and $0$ on $D$.  Otherwise, note that by Poincar\'e recurrence
and the Birkhoff ergodic theorem there is a full measure subset $\ti
W$ of $\ti D$ consisting of points which are recurrent and have a well
defined translation number.  Calculating the translation number of a
point $\ti x \in \ti W$ on a subsequence of iterates which converges
to a point of $\ti \A$ shows that it must be $0$. Hence $W$, the
projection of $\ti W$ to $D$, has the desired properties.
\end{proof}

\section{Pseudo-rotation subgroups of $\Symp^r_\mu(M)$.}

\begin{defn} Suppose $M$ is a compact oriented surface.
  A {\em pseudo-rotation subgroup} of $\Symp^r_\mu(M)$ with $r
  \ge 1,$ is a subgroup $G$ with the property that every non-trivial
  element of $G$ has exactly $\X(M)$ fixed points. An element $f \in
  \Symp^r_\mu(M)$ will be called a {\em pseudo-rotation} provided the
  cyclic group it generates is a pseudo-rotation group.
\end{defn}

Our definition may be slightly non-standard in that we consider
finite order elements of $\Symp^r_\mu(M)$ to be pseudo-rotations.
We observe that if $\X(M) < 0$ then
any pseudo-rotation group is trivial.  Since we assume $M$ is oriented
we have only the cases that $M$ is $\A,\ \D^2,$ or $S^2$ for which any
pseudo-rotations must have precisely $0,\ 1,$ or $2$ fixed points
respectively.  By far the most interesting case is $M = S^2$, since
we will show in Lemma~\ref{A-D abelian} that when $M = \D^2$ or $\A^2$ 
any pseudo-rotation group is abelian. This is not 
the case when $M = S^2$ since $SO(3)$ acting
on the unit sphere in $\R^3$ is a pseudo-rotation group.

There are three immediate  but
very useful facts about pseudo-rotations which we summarize
in the following Lemma:

\begin{lemma} \label{useful facts}
Suppose $M$ is a compact surface and $f \in \Symp^r_\mu(M)$
is a pseudo-rotation.
\begin{enumerate}
\item Either $f$ has finite order
or $\Fix(f) = \Per(f).$
\item If $\Fix(f)$ contains more than $\X(M)$ points then
$f = id.$  In particular if $f,g$ are elements of
a pseudo-rotation group which agree at more than $\X(M)$ points
then $fg^{-1} =id$ so $f = g.$
\item If $M = \D^2$, then the one fixed point of 
$f$ is in the interior of $\D^2$. 
\end{enumerate}
\end{lemma}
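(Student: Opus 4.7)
The three claims are of quite different flavors: (1) and (2) are essentially unpackings of the definition of pseudo-rotation, while (3) requires genuine two-dimensional dynamics. The plan is to dispatch (1) and (2) by bookkeeping and then establish (3) by combining Poincar\'e recurrence with the Brouwer plane translation theorem.

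For (1), I would assume $f$ has infinite order, so that every iterate $f^n$ with $n \ge 1$ is a non-trivial element of the cyclic pseudo-rotation group $\langle f \rangle$ and therefore satisfies $|\Fix(f^n)| = \X(M)$. Since $\Fix(f) \subseteq \Fix(f^n)$ and both sets have the same finite cardinality, they coincide; taking the union over $n$ gives $\Per(f) = \Fix(f)$. For (2), if $|\Fix(f)| > \X(M)$ then $f$ fails the defining property of a non-trivial element of the pseudo-rotation group $\langle f \rangle$ and so must be trivial. The ``in particular'' refinement then follows at once: for $f, g$ in a pseudo-rotation group $G$ the element $h := fg^{-1}$ lies in $G$, and its fixed set contains the coincidence set of $f$ and $g$; if that coincidence set has more than $\X(M)$ points then $h$ has too many fixed points to be non-trivial, so $h = id$.

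The substantive step is (3). Since $\X(\D^2) = 1$, the pseudo-rotation $f$ has a unique fixed point $p$, and I need to rule out $p \in \partial \D^2$. Arguing by contradiction, suppose $p \in \partial \D^2$. Then $f|_{\Int(\D^2)}$ is a fixed-point-free, orientation preserving homeomorphism of $\Int(\D^2) \cong \R^2$, i.e.\ a Brouwer homeomorphism; the Brouwer plane translation theorem then shows that every orbit is wandering, so $f|_{\Int(\D^2)}$ has no recurrent points. On the other hand, $f$ preserves the smooth volume form $\mu$, and $\mu(\Int(\D^2))$ is finite and positive, so Poincar\'e recurrence produces a full-measure set of recurrent points in $\Int(\D^2)$. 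This contradiction forces $p \in \Int(\D^2)$.

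The main obstacle is (3); parts (1) and (2) are essentially formal. The crucial input in (3) is the classical fact that a fixed-point-free orientation preserving homeomorphism of $\R^2$ admits no recurrent points, which is incompatible with preservation of a finite measure of full support.
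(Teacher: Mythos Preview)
Your proof is correct and follows the same approach as the paper: parts (1) and (2) are unwound directly from the definition of a pseudo-rotation group, and part (3) is obtained by combining Poincar\'e recurrence (from the finite invariant measure) with the Brouwer plane translation theorem to rule out a fixed-point-free restriction to the interior. The paper's own proof is a one-sentence version of exactly this argument.
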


\begin{proof}

Parts (1) and (2) are immediate from the definition of pseudo-rotation
and part (3) holds because otherwise the restriction of $f$ 
to the interior would have
recurrent points but no fixed point, contradicting the Brouwer
plane translation theorem.
\end{proof}

We will make repeated use of these properties.

\begin{prop}\label{rho constant}
Suppose $f \in \Symp^r_\mu(M)$ is a pseudo-rotation.
Then $U = \Int(M) \setminus \Fix(f)$ is an open annulus
and $\rho_f: U_c \to \T = \R/\Z$ 
is a constant function. Moreover, 
{if $G \subset \Symp^r_\mu(M)$ is an abelian pseudo-rotation group,
with $\Fix(g) = \Fix(f)$ for all $g \in G$, then the assignment 
$g \mapsto \rho_g$,  for $g \in G$, is an injective homomorphism,
so, in particular,
if $g$ has infinite order then the constant $\rho_g$ is irrational.}
\end{prop}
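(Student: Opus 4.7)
I would attack the three assertions in turn. First, to see that $U$ is an open annulus I would check the three possible cases: when $M = \A$, $\chi(M) = 0$ forces $\Fix(f) = \emptyset$ and $U = \Int(M)$ is already an open annulus; when $M = \D^2$, $\chi(M) = 1$ and Lemma~\ref{useful facts}(3) places the unique fixed point in $\Int(M)$, so $U$ is an open disk minus an interior point; when $M = S^2$, $U = S^2 \setminus \Fix(f)$ is $S^2$ minus two points. In every case $U_c$ is a closed annulus and $f$ extends canonically to $f_c: U_c \to U_c$.

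Second, I would prove constancy of $\rho_f: U_c \to \T$. If $f$ has finite order $n$, then for any lift $\ti f$ of $f_c$ to the universal cover of $U_c$ the iterate $\ti f^n$ is a lift of $\mathrm{id}$ and hence a deck transformation $T^k$, so $n \tau_{\ti f}(x) = k$ wherever $\tau_{\ti f}$ is defined, forcing $\rho_f \equiv k/n$ in $\T$. For $f$ of infinite order, I would argue by contradiction: if $\rho_f$ were non-constant, then for some rational $p/q$ strictly between two of its values there would be positive-measure $f$-invariant subsets $X_+, X_- \subset U_c$ on which an appropriate lift $\ti f^q T^{-p}$ of $f_c^q$ has strictly positive, respectively strictly negative, pointwise translation. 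The Brouwer plane translation and Franks recurrence argument used in the proof of Lemma~\ref{int rho} then produces a fixed point of $\ti f^q T^{-p}$ in the interior $U$ of $U_c$, equivalently a periodic point of $f$ of period $q$ and rotation number $p/q$ lying in $U$. Distinct rationals $p/q$ in the range of $\rho_f$ produce distinct periodic orbits, contradicting the fact that $\Per(f) = \Fix(f) \subset M \setminus U$ (Lemma~\ref{useful facts}(1)) has only $\chi(M)$ elements.

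Third, for the moreover statement, fix the abelian pseudo-rotation subgroup $G$ with $\Fix(g) = \Fix(f)$ for every $g \in G$. The first part applied to each $g$ gives a common annulus $U$ and a constant rotation number $\rho_g \in \T$ which coincides with the mean rotation number of $g$ on $U$. Additivity of mean rotation numbers on the common invariant set $U$, as recorded in the preceding section, makes $g \mapsto \rho_g$ into a homomorphism $G \to \T$. To see injectivity, suppose $\rho_g = 0$; then Lemma~\ref{int rho} applied to $g_c$ on $U_c$ produces a fixed point of $g$ in $\Int(U_c) = U$, contradicting $U \cap \Fix(g) = \emptyset$ unless $g = \mathrm{id}$ by Lemma~\ref{useful facts}(2). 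Finally, if $g$ has infinite order and $\rho_g = p/q$ is rational, then $g^q \in G$ is nontrivial but has $\rho_{g^q} = q \rho_g = 0$, contradicting injectivity, so $\rho_g$ must be irrational.

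The main obstacle is the constancy statement in the infinite-order case: because a pseudo-rotation has too few periodic points to be multi-rotational, Theorem~\ref{max-annuli} is unavailable and the extra periodic points must be generated by hand from the non-constancy hypothesis via the mean-translation and recurrence machinery underlying Lemma~\ref{int rho}.
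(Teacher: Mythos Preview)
Your proposal is correct and follows essentially the same route as the paper; the one difference is that the paper dispatches constancy of $\rho_f$ in a single line by invoking the Poincar\'e--Birkhoff theorem (non-constant $\rho_f$ forces periodic points of arbitrarily large period), whereas you reconstruct that conclusion by hand via the recurrence machinery behind Lemma~\ref{int rho}---so your worry about Theorem~\ref{max-annuli} being unavailable is misplaced, since neither argument uses it. For the ``moreover'' clause your argument and the paper's coincide up to the inessential order in which injectivity and irrationality are deduced.
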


\begin{proof} The only possibilities for $M$ are $S^2, \A$ or
 $\D^2$.   If $M = \D^2$, as remarked above, its one fixed 
point must be in the interior of $M$.
Hence in all cases $U$   is an open annulus.
If $\rho_f$ is not constant on $U_c$ then the 
Poincar\'e-Birkhoff Theorem (see Theorem (2.2) of
\cite{FH-ent0} for example) implies  there are 
periodic points with arbitrarily large period.
We may therefore conclude that $\rho_f$ is
constant and equal to $\rho_\mu(f)$.  
{ Suppose now that $G$ is an abelian
pseudo-rotation subgroup containing $f$ and $\Fix(f) = \Fix(g)$ for
all $g \in G$, so $U$ is $G$-invariant.
Since $\rho_\mu: G \to \T$ is a homomorphism
$\rho_\mu(g)$ being rational implies
$\rho_\mu(g^n) = 0$ for some $n$. Then Lemma~\ref{int rho}
implies the existence of a fixed point in $U$ for $g^{n}$.  Hence 
$g^{n} = id$. We conclude that if $g$ has infinite order then
 $\rho_\mu(g)$ is irrational.  If  $\rho_\mu(f) =  \rho_\mu(g)$
then  $\rho_\mu(fg^{-1}) = 0$ and the same argument shows 
$fg^{-1} = id$, so the assignment $f \mapsto \rho_\mu(f)$ 
for $f$ in the abelian group $G$ is injective.}
\end{proof}

We observed above that if $f$ is a non-trivial pseudo-rotation then
either $f$ has finite order or 
$\Per(f) = \Fix(f).$  We now prove the converse to this statement.

\begin{prop}\label{prop: pseudo-r}
Suppose $M$ is $\A,\ \D^2$ or $S^2$ and 
$G$ is a subgroup of $\Symp^\infty_\mu(M)$ with the property that every 
non-trivial element $g$ of $G$ either has finite order or 
satisfies $\Fix(g) = \Per(g)$.  Then $G$ is a pseudo-rotation group. 
\end{prop}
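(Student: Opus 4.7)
The plan is to show that every nontrivial $g \in G$ has exactly $\chi(M)$ fixed points, handling the finite-order and infinite-order-with-$\Fix(g)=\Per(g)$ cases separately and in each establishing the upper and lower bounds.

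For finite-order $g$ I would invoke Ker\'ekj\'art\'o's theorem: a nontrivial finite-order orientation-preserving homeomorphism of $M\in\{\A,\D^2,S^2\}$ is topologically conjugate to a standard rotation, and such a rotation has exactly $\chi(M)$ fixed points (none on $\A$, the origin on $\D^2$, the two poles on $S^2$). That disposes of the finite-order case, both upper and lower bounds at once.

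Suppose instead that $g$ has infinite order with $\Fix(g)=\Per(g)$. I would first argue that $g$ has zero topological entropy: otherwise Katok's horseshoe theorem \cite{katok:horseshoe} would produce periodic orbits of arbitrarily large prime period, directly contradicting $\Per(g)=\Fix(g)$. For the upper bound $|\Fix(g)|\le\chi(M)$ I then argue by contradiction. If $|\Fix(g)|>\chi(M)$, then $g$ is an infinite-order, entropy-zero element of $\Symp^\infty_\mu(M)$ with more than $\chi(M)$ periodic points, so by definition $g\in\zz(M)$. Theorem~\ref{max-annuli} then supplies a maximal $g$-invariant open annulus $U$ on which $\rho_g:U_c\to S^1$ is continuous and non-constant, so its image is a nontrivial arc and contains nonzero rationals $p/q$. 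Invoking Franks' theorem realizing every rational in the rotation interval of an area-preserving annular homeomorphism by a periodic orbit, one extracts a periodic orbit of period $q\ge 2$ in $U$; this orbit is not fixed, contradicting $\Per(g)=\Fix(g)$.

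Finally, for the lower bound $|\Fix(g)|\ge\chi(M)$ in the infinite-order case: the case $M=\A$ is vacuous; $M=\D^2$ follows from the Brouwer fixed point theorem; and for $M=S^2$ one first uses Lefschetz to get at least one fixed point, then observes that if there were exactly one, removing it would yield an orientation- and area-preserving fixed-point-free homeomorphism of $\R^2$, and the Brouwer plane translation theorem would then force the absence of recurrent points, contradicting Poincar\'e recurrence for the finite invariant measure inherited from $\mu$. The main obstacle I anticipate is the upper-bound step, specifically bridging from the non-constant rotation function supplied by Theorem~\ref{max-annuli} to an actual non-fixed periodic orbit via a sharp enough rotation-realization theorem for area-preserving annular maps.
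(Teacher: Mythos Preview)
Your proposal is correct and follows the same core strategy as the paper: rule out positive entropy via Katok, then for the upper bound invoke Theorem~\ref{max-annuli} to get a non-constant rotation function on some maximal annulus and realize a rational value by a genuine periodic orbit (the paper cites Corollary~2.4 of \cite{franks:recurrence} for this last step, which is exactly the Franks realization result you anticipate needing).

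The differences are in the peripheral steps. For the lower bound $|\Fix(g)|\ge\chi(M)$ the paper does not split into cases: it invokes Simon's theorem \cite{simon:index} that every isolated fixed point of an area-preserving surface map has Lefschetz index at most~$1$, so the Lefschetz number $\chi(M)$ forces at least $\chi(M)$ fixed points uniformly for all three surfaces. This is cleaner than your case-by-case argument via Brouwer and the plane-translation/recurrence trick, though yours is perfectly valid. Conversely, for the finite-order case the paper's written proof is terser than yours: its concluding sentence reads ``either $g$ has finite order or $\card(\Per(g))=\chi(M)$,'' so the verification that a nontrivial finite-order element has exactly $\chi(M)$ fixed points is left implicit (it follows, as elsewhere in the paper, from the fact that after averaging the metric each fixed point of the resulting isometry has index exactly $+1$). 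Your explicit appeal to Ker\'ekj\'art\'o fills this in more carefully.
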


\begin{proof} 
{No element  $g \in G$ can have positive entropy since that
would imply $g$ has points of arbitrarily high period
(see Katok \cite{katok:horseshoe}), a contradiction.}
 If $g \in G$, then it must have at least $\X(M)$
fixed points because the Lefschetz index of a fixed point of $g$ is at
most $1$ (see \cite{simon:index}, for example). 
 If $g$ is non-trivial and 
has more than $\X(M)$ fixed points it follows from 
part (\ref{item:rho non-constant}) of Theorem~\ref{max-annuli}
that there is an $g$-invariant annulus $U \subset int(M)$ for which
the rotation number function in continuous and non-constant.
It then follows from Corollary~2.4 of \cite{franks:recurrence} 
that there are periodic  points with arbitrarily high period,
contradicting our hypothesis. We conclude
that either $g$ has finite order or $\card(\Per(g)) = \X(M).$ 
\end{proof}

\begin{lemma} \label{three components} Suppose $U \subset M$ is an
open annulus and that $C_1$ and $C_2$ are disjoint closed connected
sets in $U$  each of which separates the ends of $U$ and 
such that both  $U \setminus C_1$ and $U \setminus C_2$
have two components.  Then $M \setminus (C_1 \cup C_2)$ has three
components: one with frontier contained in $C_1$, one with frontier
contained in $C_2$ and an open annulus with frontier contained in
$(C_1 \cup C_2)$.
\end{lemma}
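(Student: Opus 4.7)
My plan is to first unpack the nested structure of $C_1$ and $C_2$ inside $U$, use this to exhibit the three components of $U\setminus(C_1\cup C_2)$, then attach the appropriate pieces of $M\setminus U$ to obtain the three components of $M\setminus(C_1\cup C_2)$, and finally identify the middle piece as an open annulus using planar topology.

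\emph{Step 1 (nested structure).} Label the ends of $U$ as $0$ and $1$ and let $A_i^0,A_i^1$ denote the components of $U\setminus C_i$ containing ends $0,1$ respectively. Since $C_2$ is connected and disjoint from $C_1$, it lies in a single component of $U\setminus C_1$; after relabeling the ends we may assume $C_2\subset A_1^1$, and symmetrically $C_1\subset A_2^0$. The connected set $A_1^0$ is disjoint from $C_2$ and contains end $0$, so $A_1^0\subset A_2^0$; symmetrically $A_2^1\subset A_1^1$.

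\emph{Step 2 (decomposition).} Set $V := A_1^1\cap A_2^0$. Combining $U = A_1^0\sqcup C_1\sqcup A_1^1$ with $A_1^1 = V\sqcup C_2\sqcup A_2^1$ yields the disjoint decomposition
\[
U\setminus(C_1\cup C_2) = A_1^0\sqcup V\sqcup A_2^1,
\]
each piece open in $U$. Any limit point of $V=A_1^1\cap A_2^0$ not in $V$ lies in $\partial A_1^1\cup\partial A_2^0\subset C_1\cup C_2$, so $\fr_U(V)\subset C_1\cup C_2$; and evidently $\fr_U(A_1^0)\subset C_1$ and $\fr_U(A_2^1)\subset C_2$. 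Attaching the portion of $M\setminus U$ accumulating at end $0$ to $A_1^0$ and the portion at end $1$ to $A_2^1$ produces three components of $M\setminus(C_1\cup C_2)$ whose frontiers in $M$ are contained in $C_1$, in $C_1\cup C_2$, and in $C_2$ respectively.

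\emph{Step 3 (middle is an annulus).} To see $V$ is connected, note that $A_2^0 = (A_1^0\cup C_1)\sqcup V$ is connected and $A_1^0\cup C_1$ is a closed connected subset of $A_2^0$. A separation $V=V_1\sqcup V_2$ together with connectedness of $C_1$ would force some point of $C_1$ to lie in $\bar V_1\cap\bar V_2$, from which suitable neighborhoods yield a separation of $A_2^0$, a contradiction. For the topological type, $V$ is open, planar (as an open subset of $U$), orientable, and without boundary; its space of ends has exactly two elements, one collapsing $C_1$ and one collapsing $C_2$; and $\pi_1(V)\cong\Z$, generated by a simple closed curve $\gamma\subset V$ essential in $U$, which one produces as the boundary in $A_2^0$ of a sufficiently small neighborhood of a compact piece of $C_1$. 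The classification of non-compact, connected, orientable, genus-zero surfaces without boundary by their spaces of ends then gives $V\cong S^1\times(0,1)$.

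The main obstacle is Step 3: $C_1$ and $C_2$ are only assumed closed and connected, so there is no smoothness or tameness to exploit directly, and both the connectedness of $V$ and the identification of its ends require planar-topology arguments rather than coordinate descriptions. The workaround is to treat $V$ intrinsically as a planar open surface and invoke the classification of such surfaces by ends, rather than attempting to parametrize $V$ from the ambient $U$.
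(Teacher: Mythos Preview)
Your Steps 1 and 2 are fine, but Step 3 has real gaps. The argument for connectedness of $V$ does not work as written: from $A_2^0=(A_1^0\cup C_1)\sqcup V$ with $A_2^0$ connected and $A_1^0\cup C_1$ closed and connected, one \emph{cannot} conclude that $V$ is connected---removing a closed connected set from a connected open set may leave many components (remove a diameter from an open disk). Even granting that some point of $C_1$ lies in $\bar V_1\cap\bar V_2$, you have not explained how this produces a separation of $A_2^0$; in the diameter example every point of $C_1$ lies in both closures and there is no such separation. Your subsequent assertions that $V$ has exactly two ends and that $\pi_1(V)\cong\Z$ are stated rather than proved, and the recipe for producing an essential simple closed curve in $V$ (``boundary in $A_2^0$ of a small neighborhood of a compact piece of $C_1$'') is exactly where the possible wildness of $C_1$ bites: there is no reason such a boundary is a single curve, lies in $V$, or is essential.

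The paper's proof is both different and much shorter, and it avoids all of this. Rather than analyzing $V$ intrinsically, it works in $M$ regarded as a subsurface of $S^2$: writing the components of $M\setminus C_1$ as $X_1,X_2$ (with $C_2\subset X_2$) and of $M\setminus C_2$ as $Y_1,Y_2$ (with $C_1\subset Y_1$), one has $V=X_2\cap Y_1$ and
\[
M\setminus V=(X_1\cup C_1)\ \cup\ (Y_2\cup C_2),
\]
which is visibly two connected pieces, since each is the union of a connected closed set with the closure of a connected open set meeting it. The classical fact that an open connected planar domain whose complement in $S^2$ has exactly two components is an annulus then finishes the argument in one line. This complement count replaces your entire ends/$\pi_1$/classification program and needs no curves constructed inside $V$; you should reorganize Step~3 along these lines.
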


\begin{proof}    Clearly $M \setminus C_i$ has two components.
Let $X_1$ and $X_2$ [resp. $Y_1$ and $Y_2$] be the  components of $M \setminus C_1$ [resp. $M \setminus C_2$] labeled so that $C_2 \subset X_2$ and $C_1 \subset Y_1$.    Then $X_1$ and $Y_2$ are components of  $M \setminus (C_1 \cup C_2)$  with frontiers contained in $C_1$ and $C_2$ respectively and it suffices to show that $V = X_2 \cap Y_1$ is an open annulus.    
We use the fact
that an open connected subset of $S^2$ whose complement has two components is
an annulus. {Of course the same is true for $M$ which may be considered
as a subsurface of $S^2$.} But 
$X_1 \cup C_1 = \cl(X_1) \cup C_1$ is connected and similarly so is
$Y_2 \cup C_2 = \cl(Y_2) \cup C_2$.  Hence $M \setminus V $ has two 
components, namely $X_1 \cup C_1$ and $Y_2 \cup C_2$.  So $V$ is an annulus.
\end{proof}

Our next result includes a very special case of Proposition~(\ref{prop:
  cent}) namely we show that the centralizer of an infinite order
pseudo-rotation is virtually abelian.  In fact, for later use, we need
to consider a slightly more general setting, namely, not just the
centralizer of a single pseudo-rotation, but the centralizer of an
abelian pseudo-rotation group all of whose non-trivial elements have
the same fixed point set.  

\begin{lemma}\label{2periodic}
Suppose $A$ is an abelian pseudo-rotation subgroup of $\Symp^\infty_\mu(M)$ 
containing elements of arbitrarily large order. Suppose
also that there is a set $F$ with $\X(M)$ points such that $\Fix(f) = F$ for all
non-trivial $f \in A.$ 
Then the subgroup $\C_0$ of the centralizer, $\Cent^\infty_\mu(A),$ of $A$
 in $\Symp^\infty_\mu(M)$
consisting of those elements which pointwise fix $F$, is abelian and
has index at most $2$ in $\Cent^\infty_\mu(A).$
\end{lemma}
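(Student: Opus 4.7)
My plan is to prove the two conclusions separately.

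\textbf{Index bound.} For $g \in \Cent^\infty_\mu(A)$ and any nontrivial $f \in A$, the relation $gfg^{-1} = f$ gives $g(F) = g(\Fix f) = \Fix f = F$, so $\Cent^\infty_\mu(A)$ permutes $F$ setwise. The induced homomorphism $\Cent^\infty_\mu(A) \to \mathrm{Sym}(F)$ has kernel $\C_0$, and since $|F| = \X(M) \le 2$, the image has order at most $2$.

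\textbf{Reduction to boundary behavior.} Let $U = \Int(M) \setminus F$, an open annulus, with annular compactification $U_c$; every element of $\C_0$ extends to $U_c$ preserving each component of $\partial U_c$ setwise. The key input is that $\rho_\mu(A)$ is dense in $\T$: by Proposition~\ref{rho constant} applied to $A$, $\rho_\mu|_A$ is injective, and since $A$ contains elements of arbitrarily large order, $\rho_\mu(A)$ is an infinite subgroup of $\T$, hence dense. On each component of $\partial U_c$, any nontrivial $f \in A$ acts with rotation number $\rho_\mu(f) \ne 0$ and thus fixed-point-freely, so $A$ acts freely. A free abelian action on $S^1$ is topologically conjugate to a group of rotations, so $A|_{\partial U_c}$ is conjugate to a dense subgroup of $SO(2)$. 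Any $g \in \C_0$ then commutes with this dense group on $\partial U_c$, and by continuity with all of $SO(2)$, so $g|_{\partial U_c}$ is itself a rotation. Hence $\C_0|_{\partial U_c}$ is abelian, and for any $g_1, g_2 \in \C_0$ the commutator $c = [g_1, g_2]$ satisfies $c|_{\partial U_c} = \mathrm{id}$.

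\textbf{From boundary identity to full identity.} The main obstacle is promoting this to $c = \mathrm{id}$ on $M$. The plan is to establish that $\C_0$ is itself a pseudo-rotation group with common fixed set $F$, so that Proposition~\ref{rho constant} provides an injection $\rho_\mu\colon \C_0 \hookrightarrow \T$ and forces $\C_0$ abelian (with $c = \mathrm{id}$ then following from $\rho_\mu(c) = 0$). Suppose for contradiction a nontrivial $g \in \C_0$ has $\Fix(g) \supsetneq F$. Since $A$ acts freely on $U$ and is infinite, $g$ fixes an infinite set. Positive entropy for $g$ is ruled out by Proposition~\ref{prop cyclic} applied to $A$ (which has full support of finite type, as each nontrivial element has $\Fix = F$), since $A \subset \Cent^\infty_\mu(g)$ would then need to be virtually cyclic, incompatible with $A$ embedding in $\T$ with elements of arbitrarily large order. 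Nontrivial finite order for $g$ is ruled out by the classification of finite-order orientation-preserving area-preserving maps of $\A,\D^2,S^2$ combined with the requirement that $g$ commute with elements of $A$ having dense rotation numbers. Hence $g \in \zz(M)$, and applying Theorem~\ref{max-annuli} and Corollary~\ref{Z centralizer} to a maximal $g$-invariant annulus $U_\alpha$ should yield the contradiction: the non-constant rotation function $\rho_g$ on $U_\alpha$ combined with the invariance of essential level-set components of $\rho_g$ under the finite-index subgroup $\Stab_A(U_\alpha)$ (which inherits dense constant rotation numbers) is what I expect to be the main technical hurdle to extract.
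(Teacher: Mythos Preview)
Your index bound and your boundary analysis on $\partial U_c$ are fine, but the heart of your strategy---showing that $\C_0$ itself is a pseudo-rotation group with common fixed set $F$---is not just incomplete, it is \emph{false}. Take $M = \A$, so $\X(M)=0$ and $F=\emptyset$, and let $A$ be generated by an irrational rotation $R_\alpha(x,y)=(x+\alpha,y)$. Any smooth twist $g_\phi(x,y)=(x+\phi(y),y)$ is area-preserving and commutes with $R_\alpha$, hence lies in $\C_0$; but if $\phi$ has a zero, then $g_\phi$ fixes an entire circle and is not a pseudo-rotation. In this example $g_\phi\in\zz(M)$, the maximal $g_\phi$-annuli and all of their $\rho_{g_\phi}$ level sets are horizontal, and $A$ preserves every one of them. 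So the ``main technical hurdle'' you describe---a tension between non-constant $\rho_g$ and $A$-invariance of level sets---simply does not exist: $A$ can preserve every trimmed level set of $\rho_g$ without contradiction. Your argument therefore cannot be completed along these lines. Note also that Proposition~\ref{rho constant}, which you hoped to invoke at the end, requires its input group to be a pseudo-rotation group, so it is not available for $\C_0$.

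The paper's proof avoids this trap by never claiming anything about general $g\in\C_0$. Instead it works directly with a commutator $h=[g_1,g_2]$. The point is that $h$ has mean rotation number zero on $U_c$, so Lemma~\ref{int rho} forces $\Fix(h)\cap U\ne\emptyset$, giving $h$ more than $\X(M)$ fixed points. After ruling out finite order via Lefschetz, one has $h\in\zz(M)$ and picks $V\in\cA_h$. The contradiction is then extracted not from structural properties of $\C_0$ but from the hypothesis on $A$: in each case (V inessential in $U$; V essential and $\cl(V_0)$ is $\C_0$-invariant; V essential and not) one eventually produces an $A$-periodic open disk or inessential region in $U$, and Brouwer then gives a fixed point of some $f^m$ in $U$, contradicting $\Fix(f^m)=F$ for $f\in A$ of sufficiently large order. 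The key maneuver you are missing is this: use $\rho_\mu(h)=0$ to plant a fixed point of $h$ inside $U$, and then leverage the dynamics of $A$ (not of $\C_0$) on the resulting $\cA_h$-annuli to contradict the pseudo-rotation hypothesis on $A$.
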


\begin{proof} 
Elements of $A$ must have entropy $0$ since positive entropy implies
the existence of infinitely many periodic points by a result of Katok,
\cite{katok:horseshoe}.  Let $U$ be the open annulus $M \setminus (F
\cup \partial M)$.  The group  $\Cent^\infty_\mu(A)$ preserves 
$F \cup \partial M$ and hence $U$. The
subgroup $\C_0 \subset \Cent^\infty_\mu(A)$, whose elements fix the ends of $U$,
has index at most $2$.

We claim that the elements of $\C_0$ all have entropy $0$. Clearly we
need only consider elements of infinite order since all finite order
homeomorphisms have entropy $0$.  Hence the claim follows from
Corollary~(\ref{dichotomy}) if  there is an element of infinite
order in $A$.  Otherwise if $ \C_0$ contains an element with
positive entropy then it contains an element $g$ with a hyperbolic
fixed point $p \in U$.  Each point in the $A$-orbit of $p$ is in
$\Fix(g)$ and has the same set of eigenvalues for the derivative $Dg$.
It follows that the $A$-orbit of $p$ has finite cardinality, say $m$,
and hence that $p \in \Fix(f^m)$ for all $f \in A$.  
Part (2) of Lemma~\ref{useful facts} then implies that 
$f^m = id$ for all $f \in A$, {if $m > 2$.}  This contradicts
the fact that $A$ contains elements of arbitrarily high order.
This completes the proof that all elements of $\C_0$ have entropy zero.

To prove that $\C_0$ is abelian, we will show that each commutator $h$
of two elements in $\C_0$ is the identity by assuming that $h$ is non-trivial and arguing to a contradiction.  Since $h$ is a commutator
the map $h_c: U_c \to U_c$ has mean rotation number $\rho_\mu(h_c) =
0$ and hence, by Lemma~\ref{int rho}, $h$ has a fixed point in $U$.
Therefore $\Fix(h)$ contains more than $\X(M)$ points.  If $h$ has finite order  then in a suitable averaged metric it is
an isometry of $M$.  But then each fixed point must
have Lefschetz number $+1$ and hence by the Lefschetz theorem $h$
has $\X(M)$ fixed points, a contradiction.
We conclude therefore that $h$ has infinite order
and hence satisfies the hypothesis 
of Theorem~(\ref{max-annuli}). By this theorem there exists an
element $V \in \cA_h$ and it must be the case that $V \subset U$
since $V$ cannot contain a point of $\Fix(h) \supset F$.  We will
show this leads to a contradiction, either by showing that $h$ has a
fixed point in $V$ or by showing that some non-trivial $f \in A$ has
a fixed point in $U$. We may then conclude $h = id$ and hence that
$\C_0$ is abelian.

Suppose first that $V$ is
inessential in $U$.   Elements of $A$ commute with $h$ and hence
permute the elements of $\cA_h$ by Corollary~\ref{Z centralizer}. 
Since there can be only finitely many
elements of $\cA_h$ of any fixed area, the $A$-orbit
of $V$ is finite.   Hence there 
is $m$ such that $f^m(V) = V$ for all $f \in A$. 
The union of  $V$  with the component of its
complement which is disjoint from $F$ is an open disk $D \subset
U$ satisfying $f^m(D) = D$ for all $f \in A.$ 
It follows from the Brouwer plane translation theorem that $f^m$ 
has a fixed point in $D.$ If the order of $f$ is greater than $m$ then
$f^m$ is a non-trivial element of $A$ with fixed points not in $F$
contradicting the assumption that $\Fix(f) = F$ for all non-trivial $f \in A$.
So the possibility that $V$ is inessential in $U$ is contradicted.

We are now reduced to the case that $V$ is essential in $U$.  
It follows from part (\ref{item: cA periodic}) of 
Corollary~\ref{Z centralizer} that each element of $A$ preserves
$V$ or maps it to a disjoint element of $\cA_h$.  
Since the annulus $U$ is $A$-invariant and $V$ is essential in 
$U$ it follows that $V$ is $A$-invariant as the alternative
would contradict the fact that $A$ preserves area.
We want to replace $V$ with a slightly smaller essential
$V_0 \subset V$ which has the property that its frontier (in $M$) lies
in $V$ and has measure $0$. To do this we observe that $\rho_h$ is non-constant on $V$  and hence
has uncountably many level sets.  Hence there must
be two of its level sets $C_1, C_2$ which have measure $0$. Let $V_0$ be the
essential open annulus in $V$ whose frontier lies in $C_1 \cup C_2$ (see
Lemma~\ref{three components}).  Then $\mu(V_0) = \mu(\cl_{U_c}(V_0))$ since
$\cl_{U_c}(V_0) \setminus V_0 \subset C_1 \cup C_2.$  It follows from 
part (\ref{item: inv level}) of Corollary~\ref{Z centralizer} that
each $C_i$ is $A$-invariant and hence $V_0$ is also.

As a first subcase, suppose that $ \cl_{U_c}(V_0)$ is $g$-invariant 
for each $g \in
\C_0$.  Then $h$ is a commutator of elements 
that preserve $\cl_{U_c}(V_0)$, so
\[
\rho_\mu(h, \cl_{U_c}(V_0)) =  0,
\] (measured in $U_c$).  Since $V_0$ differs
from $\cl_{U_c}(V_0)$ in a set of measure zero, $\rho_\mu(h, V_0) = 0$ also.
The translation number  with respect to a lift $\ti h$ of a point $x \in V_0$
can be measured in either $U_c$ or $(V_0)_c$ giving $\tau_{\ti h}(x)$
and $\tau_{\ti h|_{\ti V_0}}(x)$.  But $x \in V_0$ lies on a compact
$h$-invariant level set $C_0$ which is in the interior of both $U_c$ 
and $(V_0)_c$.  This implies $\tau_{\ti h}(x) = \tau_{\ti h|_{\ti V_0}}(x)$.
Hence we conclude that $\rho_\mu(h) =  0,$ measured in $(V_0)_c$ and
by Lemma~\ref{int rho}, $h$ has a fixed point in $V_0$, a
contradiction.
  
The last subcase is that there exists $g \in \C_0$ such
that $g(V_0) \not \subset \cl_{M}(V_0) = \cl_{U_c}(V_0)$.  
Choose a component $W$ of $g(V_0) \cap
(U \setminus \cl_{U_c}(V_0))$.  Since $g$ leaves $U$ invariant and preserves area and since $g^{-1}(W) \cap W = \emptyset$, $W$ cannot contain a closed  curve
$\alpha$ that is essential in $U$.  Thus   $W$ is inessential in $U$.
As we noted above $V_0$ is $A$-invariant.  
Since  $g \in \C_0$, it commutes with $A$  so $g(V_0)$ is also
$A$-invariant. It follows that $A$ permutes the components of $g(V_0)
\cap (U \setminus \cl_{U_c}( V_0))$. In particular for some $m >
0$ we have $f^m(W) =W$ for all $f \in A$.  Letting $D$ be the union of
$W$ with any components of its complement which do not contain 
 ends of $U$ we
conclude that $D$ is an open disk and $f^m(D) = D$ for all $f \in A$.  By the
Brouwer plane translation theorem there is a point of $\Fix(f^m)$ in $D$.
Since $f^m$ is a pseudo-rotation and has more than $\X(M)$ fixed points,
$f^m = id$.  Since this holds for any $f \in A$ we have
contradicted the hypothesis that $A$ contains elements of arbitrarily high
order.
\end{proof}

\begin{lemma}\label{A-D abelian}
Suppose that $G$ is a pseudo-rotation subgroup
of $\Symp^r_\mu(M)$ where $M = \A$ or $\D^2$ and $r \ge 2$.
Then $G$ is abelian and $\Fix(g)$ is the
same for all non-trivial $g \in G.$
\end{lemma}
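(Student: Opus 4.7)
The plan is to reduce both cases to a statement about group actions on $S^1$ via restriction to a boundary circle. To set this up, I would first observe that for $M = \A$ every non-trivial $g \in G$ has $\Fix(g) = \emptyset$ (since $\X(\A) = 0$), and for $M = \D^2$ every non-trivial $g \in G$ has its unique fixed point in $\Int(\D^2)$ by Lemma~\ref{useful facts}(3). Since elements of $G \subset \Symp^r_\mu(M) \subset \Df0^r(M)$ are isotopic to the identity, they preserve each component of $\partial M$ setwise, so fixing a boundary circle $C \subset \partial M$ and restricting yields a group homomorphism $\pi: G \to \Homeo_+(C) \cong \Homeo_+(S^1)$.

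Next I would verify two properties of $\pi$. For injectivity: if $g \in \ker \pi$ then $C \subset \Fix(g)$, which contains more than $\X(M)$ points, so Lemma~\ref{useful facts}(2) forces $g = e$. For freeness of the image: every non-trivial $g \in G$ has $\Fix(g) \subset \Int(M)$, so $\pi(g)$ has no fixed points on $C$. Thus $\pi$ embeds $G$ into a group of orientation-preserving homeomorphisms of $S^1$ in which every non-trivial element is fixed-point-free. By H\"older's theorem for the circle (on any such group the Poincar\'e rotation number provides an injective homomorphism into $(\T,+)$), $\pi(G)$ is abelian, and hence so is $G$.

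The common fixed point conclusion then follows easily. For $M = \A$ it is vacuous, since every non-trivial element has empty fixed point set. For $M = \D^2$: fix non-trivial $f, g \in G$; by abelianness $fg = gf$, so $f(g(p_f)) = g(f(p_f)) = g(p_f)$, placing $g(p_f) \in \Fix(f) = \{p_f\}$ and hence $g(p_f) = p_f$. Then $p_f \in \Fix(g) = \{p_g\}$ forces $p_g = p_f$, so all non-trivial elements share the common fixed point $p_f$.

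The main obstacle is the invocation of H\"older's theorem. For the $M = \A$ case one can bypass it entirely using the paper's mean rotation number machinery: $\rho_\mu: G \to \T$ is a homomorphism, so any commutator $h$ has $\rho_\mu(h) = 0$; by Lemma~\ref{int rho} this gives $h$ a fixed point in $\Int(\A)$, forcing $h = e$ by the pseudo-rotation hypothesis. The $M = \D^2$ case is harder to keep self-contained because $\D^2$ itself is not an annulus, so one either appeals to H\"older (or an equivalent result for free circle actions) or must first establish a common fixed point by some separate argument in order to blow it up and reduce to the annulus setting.
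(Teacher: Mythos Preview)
Your proposal is correct and follows essentially the same route as the paper. The paper treats the two cases separately: for $M = \A$ it uses the mean rotation number argument you describe (commutators have $\rho_\mu = 0$, hence a fixed point in $\Int(\A)$ by Lemma~\ref{int rho}, hence are trivial), and for $M = \D^2$ it restricts to $\partial \D^2$, checks injectivity and freeness exactly as you do, and invokes H\"older's theorem (citing \cite{Farb-Shalen}). Your version simply unifies the two cases under the boundary-restriction argument, which works equally well for the annulus; the paper's separate treatment of $\A$ avoids H\"older there but is otherwise no more elementary. The common-fixed-point deduction for $\D^2$ is identical in both.
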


\begin{proof} 
If $M =\A$ then $\Fix(g) = \emptyset$ for all non-trivial $ g \in G$. Since $\rho_\mu([f,g]) = 0$  for each $f,g \in G$,     Lemma~\ref{int rho} implies that each
$[f,g]$ has a fixed point in the interior of 
$\A$ and hence is the identity.   Thus each $f$ and $g$ commute  and we are done.

We assume for the remainder of the proof that 
 $M = \D^2$.  For each $f \in G$  
we consider $\hat f$, the restriction of $f$ to $\partial \D^2.$   Let $\hat G$ be the image in $\Diff^r(S^1)$ of $G$ under the homomorphism $f \mapsto \hat f$.

Lemma~\ref{useful facts} (3) implies that $f$ fixes a point in the interior of $\D^2$.  If  $f ^k$ fixes a point in $\partial \D^2$   for some $k \ge 1$ then $f^k$ has more than $\X(M)$ fixed points and so is the identity.  We conclude that if $\hat f$
has a point of period $k$ then $f$ is periodic of period $k$.
In particular, if  $\hat f:S^1 \to S^1$ has a fixed point then $f = id.$
{This proves that $\hat G$ acts freely on $S^1$.
It also proves that the restriction homomorphism
$f \mapsto \hat f$ is an isomorphism  $G \to \hat G$.
Since $\hat G$ acts freely on $S^1$ it follows from 
H\"older's Theorem (see, e.g., Theorem 2.3 of \cite{Farb-Shalen})
that $\hat G$ (and hence $G$) is abelian.}

Suppose $f,g \in G$ and let $\Fix(f) =  \{q\}$.  Since $f$ and $g$ commute, $g$ preserves $\{q\}$ and so fixes $q$.  This proves that $\Fix(f) = \Fix(g)$
and completes the proof. 
\end{proof}

  If $f \in \Diff^r(S^2)$ and $p \in \Fix(f)$ we can compactify $S^2 \setminus \{p\}$ by blowing up
$p$, i.e. adding a boundary component on which we extend $f$ by letting it
be the projectivization of $Df_p$.  More precisely we define
$\hat f_p: S^1 \to S^1$ by considering the boundary as the unit circle in
$\R^2$ and letting
\[
\hat f_p(v) = \frac{Df_p(v)}{|Df_p(v)|}.
\]

If $\Fix(f)$ contains two points then we may blow up these points
and obtain the annular compactification $\A$ of $S^2 \setminus \Fix(f)$.

\begin{cor}\label{fp-stabilizer}
Suppose that $G$ is a pseudo-rotation subgroup
of $\Symp^\infty_\mu(S^2)$ and that $G_p$ is the stabilizer of a 
point $p \in S^2$.  Then $G_p$ is abelian and there 
exists $q \in S^2$ such that $\Fix(g) = \{p,q\}$ for  
all non-trivial $g \in G_p$.    
\end{cor}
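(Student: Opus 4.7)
The plan is to blow up $S^2$ at $p$ to obtain a closed disk $D$ with $\partial D \cong S^1$, on which each $g \in G_p$ extends to a diffeomorphism $\tilde g : D \to D$ whose restriction to $\partial D$ is the projectivized derivative $\hat g_p(v) = Dg_p(v)/|Dg_p(v)|$ of $Dg_p \in SL(2,\R)$. The interior fixed point of $\tilde g$ is the second fixed point $q_g$ of $g$ on $S^2$, so once we know $\hat g_p$ has no fixed points on $\partial D$ for every non-trivial $g \in G_p$, the family $\{\tilde g : g \in G_p\}$ will behave like a pseudo-rotation group on the disk and the argument of the $M = \D^2$ case of Lemma~\ref{A-D abelian} will carry over.

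The heart of the proof is therefore to verify that $\hat g_p$ is fixed-point-free on $S^1$ for every non-trivial $g \in G_p$. For infinite order $g$, the cyclic group $\langle g \rangle$ is an abelian pseudo-rotation group with $\Fix(g^n) = \Fix(g) = \{p,q_g\}$ for all $n \ne 0$ (since $\Fix(g) \subseteq \Fix(g^n)$ and both have cardinality $2$), so Proposition~\ref{rho constant} applies and $\rho_g$ is a constant irrational on the annular compactification $U_c$ of $U = S^2 \setminus \{p,q_g\}$. Identifying the $p$-end of $U_c$ with $\partial D$, the Poincar\'e rotation number of $\hat g_p$ equals this irrational $\rho_g$, so $\hat g_p$ has no periodic points. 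For finite order $g$, Bochner's linearization theorem applied to the cyclic group $\langle g \rangle$ yields a smooth conjugacy between $g$ near $p$ and the linear map $Dg_p$; the pseudo-rotation condition $|\Fix(g)| = 2$ rules out $Dg_p = I$ (which would fix a whole neighborhood of $p$), and a non-trivial finite order element of $SL(2,\R)$ is conjugate to a non-trivial rotation, so $\hat g_p$ is a non-trivial finite-order orientation-preserving homeomorphism of $S^1$ and hence has no fixed points.

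With fixed-point-freeness in hand, the restriction homomorphism $\Phi : G_p \to \Diff^\infty(S^1)$, $\Phi(g) = \hat g_p$, is injective (its kernel consists of $g$ with $\hat g_p = id$, which forces $g = id$ by the previous step), and $\Phi(G_p)$ acts freely on $S^1$, so H\"older's theorem forces $\Phi(G_p)$, and hence $G_p$, to be abelian. To finish, fix any non-trivial $g_0 \in G_p$ with second fixed point $q$; for any other non-trivial $h \in G_p$, commutativity of $h$ and $g_0$ implies $h$ preserves $\Fix(g_0) = \{p,q\}$, and since $h(p) = p$ we obtain $h(q) = q$, so $q \in \Fix(h) = \{p,q_h\}$ and therefore $q_h = q$. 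The main obstacle will be the rotation-number identification in the infinite order case: one has to check carefully that the paper's annular compactification at the isolated fixed point $p$ agrees (up to orientation-preserving homeomorphism of the circle) with the projectivized-derivative blow-up, so that the constant $\rho_g$ computed inside $U_c$ really coincides with the Poincar\'e rotation number of the circle homeomorphism $\hat g_p$.
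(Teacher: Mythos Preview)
Your approach is the paper's: blow up at $p$ and reduce to the disk case handled by Lemma~\ref{A-D abelian} (whose proof via H\"older's theorem you essentially reproduce). The paper does this in three sentences, simply citing Lemma~\ref{A-D abelian} without verifying that the image of $G_p$ is a pseudo-rotation subgroup of $\D^2$; the check you add that $\hat g_p$ is fixed-point-free for non-trivial $g$ supplies exactly that missing hypothesis, and the obstacle you flag dissolves because by Definition~\ref{defn: annular comp} the annular compactification at a single-point end \emph{is} the blow-up, so the $p$-boundary of $U_c$ is literally $\partial D$ and $g_c|_{\partial D}=\hat g_p$.
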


\begin{proof}   
Suppose that $f \in G_p$.  Blowing up the point $p$ and extending $f$
we obtain $\bar f: \D^2 \to \D^2.$  This construction is functorial 
in the sense that if $h = fg$ then $\bar h = \bar f \bar g.$  Hence the
assignment $f \mapsto \bar f$ is a homomorphism from $G_p$
to $\Symp^\infty_\mu(\D^2)$.  This homomorphism is injective 
by part (2) of Lemma~\ref{useful facts}.
The result now follows from Lemma~\ref{A-D abelian}.
\end{proof}

It is not quite the case that for abelian pseudo-rotation subgroups 
 of $\Symp^\infty_\mu(S^2)$ that
the fixed point set for non-trivial elements is independent of the element.
There is essentially one exception, namely the group generated by rotations
of the sphere through angle $\pi$ about two perpendicular axes.

\begin{lemma}\label{lem: abelian rot}
If $G$ is an abelian pseudo-rotation subgroup of $\Symp^\infty_\mu(S^2)$ then
either $\Fix(g)$ is the same for every non-trivial element $g$ of $G$
or $G$ is isomorphic to $\Z/2\Z \oplus \Z/2\Z$.  In the latter case 
the fixed point sets of non-trivial elements of $G$ are pairwise
disjoint and hence their union contains six points.
\end{lemma}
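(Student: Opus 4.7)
The plan is to analyze how non-trivial elements of $G$ interact through their fixed point sets, using a basic dichotomy forced by the pseudo-rotation property. For any two commuting non-trivial elements $g, h \in G$, the map $h$ preserves $\Fix(g) = \{p_1, p_2\}$, so either $h$ fixes both points---forcing $\Fix(h) \supseteq \Fix(g)$ and hence $\Fix(h) = \Fix(g)$ since $|\Fix(h)| = \X(S^2) = 2$---or $h$ swaps them, giving $\Fix(g) \cap \Fix(h) = \emptyset$. In the second case $g$ must likewise swap the points of $\Fix(h)$ (else $\Fix(h) \subseteq \Fix(g)$, a contradiction), so $g^2$ fixes the four points of $\Fix(g) \cup \Fix(h)$ and Lemma~\ref{useful facts}(2) yields $g^2 = id$; similarly $h^2 = id$.

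Assume now $G$ does not satisfy the common fixed set condition, so that one can pick non-trivial $g, h \in G$ with $\Fix(g) \ne \Fix(h)$. The dichotomy then supplies $\Fix(g) \cap \Fix(h) = \emptyset$ and $g^2 = h^2 = id$. A short calculation shows $gh$ swaps the points of both $\Fix(g)$ and $\Fix(h)$, so $\Fix(gh)$ is disjoint from each; moreover $(gh)^2 = g^2 h^2 = id$ by commutativity, and $gh \ne id$ (else $g = h^{-1} = h$). Thus $\{id, g, h, gh\} \subseteq G$ is a copy of $\Z/2\Z \oplus \Z/2\Z$ whose three non-trivial elements have pairwise disjoint fixed sets, totaling six points.

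The remaining step, and the bulk of the work, is to show $G = \{id, g, h, gh\}$. Given any non-trivial $k \in G$, I would apply the dichotomy to the three pairs $(k,g)$, $(k,h)$, $(k,gh)$. Suppose first that $\Fix(k)$ equals one of the three existing fixed sets, say $\Fix(k) = \Fix(g) = \{p_1, p_2\}$. Then $k, g$ both lie in the stabilizer $G_{p_1}$, which by Corollary~\ref{fp-stabilizer} is abelian with every non-trivial element having fixed set exactly $\{p_1, p_2\}$, so Proposition~\ref{rho constant} provides an injective rotation number homomorphism $G_{p_1} \to \T$. Since the dichotomy applied to $(k,h)$ forces $k^2 = id$, both $k$ and $g$ have rotation number $1/2$, hence $k = g$; the subcases $\Fix(k) = \Fix(h)$ and $\Fix(k) = \Fix(gh)$ are handled identically. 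Otherwise $\Fix(k)$ is disjoint from all three, and using that $g$ and $k$ each swap the points of $\Fix(h)$, one computes directly that $gk$ fixes both points of $\Fix(h)$, so $\Fix(gk) \supseteq \Fix(h)$ and the pseudo-rotation condition forces either $gk = id$ (hence $k = g$) or $\Fix(gk) = \Fix(h)$, in which case the previous subcase applied to $gk$ gives $gk = h$ and $k = gh$; both outcomes contradict the disjointness of $\Fix(k)$ from $\Fix(g) \cup \Fix(gh)$.

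I expect the delicate point to be this final case analysis, and in particular verifying that coincidence of $\Fix(k)$ with one of the three existing fixed sets really does pin $k$ down. The whole argument turns on having the common-fixed-set hypothesis of Proposition~\ref{rho constant}, which is not directly available in $G$ but becomes available after restricting to the stabilizer $G_{p_1}$ and invoking Corollary~\ref{fp-stabilizer}.
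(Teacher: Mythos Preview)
Your argument is correct. The opening dichotomy and the deduction that $g^2 = h^2 = id$ match the paper exactly, and your derivation that $\Fix(gh)$ is disjoint from the other two fixed sets is fine.

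Where you diverge from the paper is in the proof that $G = \{id, g, h, gh\}$. The paper observes that any $f \in G$ must preserve each of $\Fix(g)$ and $\Fix(h)$, hence acts on the four-point set $\Fix(g) \cup \Fix(h)$ by one of the four pair-preserving permutations; each of these is realized by some element $g_0 \in \{id, g, h, gh\}$, so $f$ and $g_0$ agree at four points and Lemma~\ref{useful facts}(2) gives $f = g_0$ in one stroke. Your route---splitting into the cases $\Fix(k) = \Fix(g)$ (etc.) versus $\Fix(k)$ disjoint from all three, and in the first case passing to the stabilizer $G_{p_1}$ so that Corollary~\ref{fp-stabilizer} supplies the common-fixed-set hypothesis needed for the rotation-number injection of Proposition~\ref{rho constant}---is sound but noticeably longer. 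In effect you are re-deriving, via rotation numbers, the conclusion that two order-two elements with the same fixed set must coincide, whereas the paper bypasses this by working with four fixed points rather than two. Conversely, you obtain the pairwise disjointness of the three fixed sets directly in your construction of $gh$, while the paper proves it separately at the end (by noting that a common fixed point would force a common fixed set, hence an injection of $\Z/2\Z \oplus \Z/2\Z$ into $\T$, which is impossible). So both proofs invoke Proposition~\ref{rho constant} once, but at different stages and for different purposes.
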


\begin{proof}
Let $g,h$ be distinct non-trivial elements of $G$ and suppose 
$\Fix(g) \ne \Fix(h)$.
Since $G$ is abelian $g(\Fix(h)) = \Fix(h)$.
{Since $\Fix(g) \ne \Fix(h)$ and each contains two points, these
sets are disjoint.} Hence
$g$ switches the two points of $\Fix(h)$.  In this case $g$ has
two fixed points and a period $2$ orbit and hence $g^2$ has four fixed
points and must be the identity.  We conclude that $g$ has order two.
Switching the roles of $g$ and $h$ we observe that $h$ has order two.

Let $G_0$ be the abelian group generated by $g$ and $h$ 
so $G_0 \cong \Z/2\Z \oplus \Z/2\Z$. The possible actions of
an element of $G_0$ on $\Fix(g) \cup \Fix(h)$ are: fix all four points,
switch the points of one pair and fix the other, or switch both pairs. 
If $f \in G$ then $f$ preserves $\Fix(g)$ and  $\Fix(h)$ so its action on
$\Fix(g) \cup \Fix(h)$ must be the same as one of the elements of $G_0$.
Since $f$ agrees with an element of $G_0$ at four points it must, in fact,
be an element of $G_0$.  Hence any abelian pseudo-rotation group
containing non-trivial elements whose fixed point sets do not 
coincide must be isomorphic to $\Z/2\Z \oplus \Z/2\Z$.

Suppose now  $G \cong \Z/2\Z \oplus \Z/2\Z$ is 
a pseudo-rotation subgroup of $\Symp^\infty_\mu(S^2).$
The three non-trivial elements of $G$  must have pairwise disjoint fixed
point sets.  To see this note that otherwise there would be two distinct
elements,  say, $g$ and $h$ with a common fixed point. As observed above this
implies $\Fix(g) = \Fix(h)$. But any two non-trivial elements of $G$
generate it and hence $\Fix(f)$ is the same for all elements $f \in G.$
Then by Proposition~\ref{rho constant} there is an injective homomorphism
from $G$ to $\T$. This is a contradiction since $\T$ contains a unique
element of order two.

\end{proof}

\begin{remark}\label{rmk: 6pts}
Suppose in the previous lemma
$G \cong \Z/2\Z \oplus \Z/2\Z$.  Let
$F$  be the union of three sets $F_i = \{a_i, b_i\},\  1\le i \le 3$
where $F_i = \Fix(h_i)$ and $h_i$ is one of the non-trivial elements
of $G$. The elements of $G$ preserve each of the sets $F_i$ and if $h$ is a
non-trivial element of $G$ it must fix the points of one of the $F_i$'s
while switching the points of the other two (since  $h$ 
cannot fix four points).
\end{remark}

Suppose $A$ is a subgroup of the circle $\T = \R/\Z$  
and $\G = A \rtimes_\phi (\Z/2\Z)$ is the
semidirect product determined by the homomorphism $\phi: \Z/2\Z  \to \Aut(A)$ 
given by $\phi(1) = i \in \Aut(A)$ where 
$i: A \to A$ is the involution
 $i(h) = -h$.  Then $\G$ is called a {\em generalized dihedral group.}

\begin{lemma}\label{lem: F finite}
Suppose $G$ is a pseudo-rotation subgroup of $\Symp^\infty_\mu(S^2)$
such that either $G$ leaves invariant a non-empty finite set $F \subset S^2$, or
$G$ has a non-trivial normal solvable subgroup.
{Then either 
\begin{enumerate}
\item There is a $G$-invariant set $F_0$ containing two points in which case
$G$ is isomorphic to a subgroup of
the generalized dihedral group $A \rtimes_\phi (\Z/2\Z)$ for some subgroup
$A$ of the circle $\T,$ or
\item Every $G$-invariant set contains more than two points in which case
$G$ is finite.
\end{enumerate}
Moreover, if the set $F_0$ in (1) is pointwise fixed then $G$ is isomorphic
to a subgroup of $\T.$
}
\end{lemma}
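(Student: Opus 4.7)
The plan is to reduce both hypotheses to the existence of a finite $G$-invariant set, then dichotomize on whether one such set has exactly two points. If a finite $G$-invariant set $F$ is already given, I take it. Otherwise let $A$ be the last non-trivial term in the derived series of the normal solvable subgroup $N$ supplied by the hypothesis; being characteristic in the normal subgroup $N$, $A$ is normal in $G$, and $A$ is an abelian pseudo-rotation subgroup. Lemma~\ref{lem: abelian rot} applied to $A$ produces either a single $2$-point set that serves as the fixed set of every non-trivial element of $A$---and this set is $G$-invariant because conjugation by $G$ permutes elements of $A$ and hence their (coinciding) fixed sets---or the isomorphism type $A\cong\Z/2\Z\oplus\Z/2\Z$, in which case the union of the three pairwise-disjoint fixed sets is a $G$-invariant $6$-point set by the same reasoning.

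With a finite $G$-invariant set in hand, I first treat the case that no $G$-invariant set has exactly two points. A $G$-fixed point $p$ is then impossible: Corollary~\ref{fp-stabilizer} would pair $p$ with a second point $q$ such that $\Fix(g)=\{p,q\}$ for every non-trivial $g\in G$, producing a $G$-invariant pair. Hence every $G$-invariant set has at least three points, and in particular $|F|\ge 3$. The restriction map $G\to\mathrm{Sym}(F)$ is then injective, because any element in its kernel fixes more than $\X(S^2)=2$ points and so is trivial by Lemma~\ref{useful facts}(2); this gives conclusion~(2).

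Otherwise a $G$-invariant $F_0=\{p,q\}$ exists. Let $G_0\le G$ be the subgroup fixing $p$ and $q$ pointwise, of index at most $2$. By Corollary~\ref{fp-stabilizer}, $G_0$ is abelian and $\Fix(g)=F_0$ for every non-trivial $g\in G_0$, so Proposition~\ref{rho constant} delivers an injective homomorphism $\rho\colon G_0\to\T$ with image $A$. If $G=G_0$ then $G$ embeds in $\T$ via $\rho$, which is simultaneously the moreover-clause and a realization of $G$ as a subgroup of $A\rtimes_\phi(\Z/2\Z)$. If not, any $g\in G\setminus G_0$ swaps $p$ and $q$ and is itself a pseudo-rotation, so $\Fix(g)\cap F_0=\emptyset$; then $g^2$ fixes the four distinct points $F_0\cup\Fix(g)$, forcing $g^2=\mathrm{id}$ by the pseudo-rotation count. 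Fixing one such $g_0$, I define $\psi\colon G\to A\rtimes_\phi(\Z/2\Z)$ by $\psi(h)=(\rho(h),0)$ and $\psi(hg_0)=(\rho(h),1)$ for $h\in G_0$.

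The main obstacle I expect is the identity $\rho(g_0 h g_0^{-1})=-\rho(h)$ for $h\in G_0$, which is what makes $\psi$ a homomorphism. Geometrically it is clear: $g_0$ is orientation-preserving on $S^2$ and swaps the two ends of the annulus $U=S^2\setminus F_0$, so in any product structure $S^1\times I$ on $U$ it must reverse the $S^1$-orientation in order to compensate for reversing the $I$-orientation. I would make this rigorous by passing to the universal cover $\tilde U=\R\times I$, observing that any lift $\tilde g_0$ conjugates the generating deck translation $T$ to $T^{-1}$, and then computing directly that $\tilde g_0\tilde h\tilde g_0^{-1}$ has translation function equal to $-\tau_{\tilde h}$. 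Granted this identity, the homomorphism property of $\psi$ is routine, and injectivity is immediate because $\rho$ is injective on $G_0$ and $\psi$ sends $G\setminus G_0$ into the coset with second coordinate $1$.
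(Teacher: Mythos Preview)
Your proof is correct and follows essentially the same route as the paper: reduce the solvable hypothesis to a finite invariant set via the last term of the derived series and Lemma~\ref{lem: abelian rot}, then split on whether a $2$-point invariant set exists, using Corollary~\ref{fp-stabilizer} and Proposition~\ref{rho constant} exactly as the paper does. Your case organization hews more closely to the dichotomy in the lemma statement (the paper instead cases on $|F|=1,2,$ or $>2$ and promotes a single fixed point to a pair), and you spell out the semi-direct product map and the orientation-reversal identity $\rho(g_0hg_0^{-1})=-\rho(h)$ in more detail than the paper, which simply notes that swapping the ends of $U$ reverses the orientation of $H_1(U)$; but the substance is the same.
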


\begin{proof}
{Suppose first $G$ has a non-trivial normal solvable subgroup $N$.
We will reduce  this case to the case that there is a finite $G$-invariant set.
Let $H$ be the last non-trivial
element of the derived series of $N$ so $H$ is an abelian subgroup.
Since $H$ is characteristic in $N$ it is normal in $G$.}

By Lemma~\ref{lem: abelian rot} the set
\[
F = \bigcup_{h \in H, h\ne id} \Fix(h)
\]
is finite.
If $g \in G$ and $h \in H$ then $ghg^{-1} \in H$.  Since $\Fix(ghg^{-1}) = g(\Fix(h))$, it follows
that $g(F) = F.$  Hence we need only prove the conclusion of our result
under the assumption that $G$ leaves invariant a finite set $F$.

 Let $A$ be the finite index subgroup of $G$ which
pointwise fixes $F$.  
{By Corollary~\ref{fp-stabilizer} $A$ is abelian.
If $F$ contains more than two points then $A$
must be trivial and $G = G/A$ is finite since $A$ had finite index.  
If $F$
is a single point then by Corollary~\ref{fp-stabilizer} 
there is a set $F'$ containing $F$ and one other point 
which are the common fixed points for every element of $A$.
As above the set $F'$ is $G$-invariant and we replace $F$ by $F'$, 
so we may assume $F$ contains two points.}

In this case $A$ has index at most two and $\Fix(h) = \Fix(h')$ 
for all $h,h' \in A$.  {From 
Propostion~\ref{rho constant} it follows that $A$ is isomorphic to 
a subgroup of $\T$.}

{
If $h \in G \setminus A$ then $h$ interchanges the two points of $F$ and
reverses the orientation of $H_1(U)$ where $U = S^2 \setminus F.$  It
follows that $\rho_\mu (h^{-1} g h) = -\rho_\mu (g)$ for all $g \in A.$
Also $h$ has two fixed points and $h^2$ fixes the points of $F$ so it has
four fixed points and we conclude that $h^2 = id$. Hence the map
$\phi: \Z/2\Z \to G$ given by $\phi(1) = h$ is a homomorphism so
$G \cong A \rtimes_\phi \Z/2\Z$.
}

\end{proof}

\begin{prop}\label{A pseudo-r}
If $G$ is a  subgroup of $\Symp^\infty_\mu(M)$
which has an infinite normal abelian subgroup $A$ which is a
pseudo-rotation group then $G$ {has an abelian subgroup
of index at most two.}
\end{prop}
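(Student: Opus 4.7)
The plan is: first, show that every non-trivial element of $A$ has the same fixed-point set $F$ with $|F|=\X(M)$; second, identify a finite-index subgroup $G_0\le G$ that pointwise fixes $F$ and centralizes $A$; third, apply Lemma~\ref{2periodic} to conclude that $G_0$ is abelian.

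Since $A$ is a non-trivial pseudo-rotation group, $M$ must be one of $\A$, $\D^2$, or $S^2$. When $M=\A$ or $\D^2$, Lemma~\ref{A-D abelian} yields a common fixed set $F$ for every non-trivial $a\in A$; when $M=S^2$, Lemma~\ref{lem: abelian rot} gives the same conclusion, since the alternative $A\cong\Z/2\Z\oplus\Z/2\Z$ is ruled out by the hypothesis that $A$ is infinite. By Proposition~\ref{rho constant} the mean rotation number defines an injective homomorphism $\rho_\mu\colon A\hookrightarrow\T$, so $A$ has elements of arbitrarily large order: the $N$-torsion of $\T$ is finite for every $N$, hence any infinite subgroup of $\T$ cannot have bounded exponent.

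Because $A$ is normal in $G$, for non-trivial $a\in A$ and any $g\in G$ we have $g(F)=g(\Fix(a))=\Fix(gag^{-1})=F$, so $G$ setwise preserves $F$. Let $G_0\le G$ denote the subgroup of elements that fix $F$ pointwise; then $G_0$ has index at most $|F|!\le 2$ in $G$. I claim that $G_0\subset \Cent^\infty_\mu(A)$. Each $g\in G_0$ lies in $\Df0^\infty(M)$ and so is orientation preserving and preserves each component of $\partial M$; together with $g|_F=\mathrm{id}$, this implies that the canonical extension of $g$ to the annular compactification $U_c$ of $U=\Int(M)\setminus F$ preserves both ends and the orientation of $U_c$. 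Conjugation by such a $g$ therefore preserves mean rotation numbers, giving $\rho_\mu(gag^{-1})=\rho_\mu(a)$ for every $a\in A$, and the injectivity of $\rho_\mu$ on $A$ forces $gag^{-1}=a$.

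With the centralization in hand, Lemma~\ref{2periodic} applies to $A$ (abelian, with elements of arbitrarily large order and common fixed set $F$) and shows that the subgroup $\C_0$ of $\Cent^\infty_\mu(A)$ consisting of elements that pointwise fix $F$ is abelian. Since $G_0\subset\C_0$, the subgroup $G_0$ is abelian and has index at most two in $G$, as required. The main obstacle I expect is the centralization claim $G_0\subset\Cent^\infty_\mu(A)$: without the pointwise fixing of $F$, an element of $G$ could swap the two points of $F$ on $S^2$ and conjugate $A$ by the order-two automorphism $\rho\mapsto-\rho$ of $\T$, which is precisely the behaviour that prevents $G$ itself from being abelian and that is why the index cannot in general be improved below two.
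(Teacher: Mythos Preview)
Your proof is correct and follows essentially the same approach as the paper's: identify the common fixed set $F$ via Lemmas~\ref{A-D abelian} and~\ref{lem: abelian rot}, pass to the index-$\le 2$ pointwise stabilizer $G_0$, use injectivity of $\rho_\mu$ on $A$ (Proposition~\ref{rho constant}) together with conjugation-invariance of mean rotation number to show $G_0$ centralizes $A$, and then invoke Lemma~\ref{2periodic}. The only cosmetic difference is that you establish ``elements of arbitrarily large order'' up front via the embedding $A\hookrightarrow\T$, whereas the paper phrases this as a dichotomy at the end (bounded order would force $A$ finite); the content is identical.
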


\begin{proof}
Since $A$ is non-trivial,   $M$ is $\A, \D^2,$ or $S^2$.
 Since $A$ is infinite, {Lemma~\ref{A-D abelian} and
Lemma~\ref{lem: abelian rot}}
imply there is a set $F$ containing $\X(M) \le 2$ points
such that $F = \Fix(h)$ for all $h \in A.$ 
Let $U = M \setminus F$.  Observe that in all cases
$U$ is an annulus.  
The set $F$ must be invariant under $G$
since $F = \Fix(ghg^{-1}) = g(\Fix(h)) =  g(F)$ for every element $g$ in $G$.   
The subgroup $G_0$ of $G$ that pointwise fixes $F$ has index at most two. Also
elements of $G_0$ leave $U$ invariant and their restrictions to $U$
are isotopic to the identity.

{Since 
the set $\Fix(h)$ is the same for all $h \in A$,
by Proposition~\ref{rho constant} the homomorphism} $\rho_\mu: A \to \T^1$
given by $h \mapsto \rho_\mu(h)$ is injective,
where $\rho_\mu(h)$ is the mean rotation number on $U_c$.
Since conjugating $h$ by an element $g \in G_0$
does not change its mean rotation number we conclude
that $h = ghg^{-1}$ for all $h \in A.$  This means that $G_0$ is contained in
$\Cent^\infty_\mu(A,G),$ the centralizer of $A$ in $G$.

{If $A$ contains elements of arbitrarily large order then  it 
follows from Lemma~\ref{2periodic} that $G_0$ is abelian and we
are done.} So we may assume the order of elements in $A$ is bounded.
Since the order of elements of $A$ is bounded
there are only finitely many possible values for $\rho_\mu(f)$ with
$f \in A.$  Hence, since the assignment $f \mapsto \rho_\mu(f)$ is injective,
we may conclude $A$ is finite in contradiction to our hypotheses.

\end{proof}

\begin{ex}
{
Let $A$ be the subgroup of all rational rotations around the $z$-axis
and let $G$ be the $\Z/2\Z$ extension of $A$ obtained by adding an
involution $g$ which rotates around the $x$-axis and reverses the
orientation of the $z$-axis. More precisely let the homomorphism
$\phi: \Z/2\Z \to \Aut(A)$ be given by $\phi(1) = i_g$ where
$i_g(h) = g^{-1}hg = h^{-1}$. 
Every element of $A$ has finite
order while $A$ itself has infinite order.  Moreover, $G \cong
A \rtimes_\phi (\Z/2\Z)$ is not
abelian even though it has an index two abelian subgroup.}
\end{ex}

{
We are now able to classify up to isomorphism
all pseudo-rotation subgroups  of $\Symp^\infty_\mu(S^2)$ 
which have a non-trivial normal solvable subgroup.  We
denote by $\phi$ the homomorphism $\phi: \Z/2\Z \to \T$ defined
by $\phi(1)  = i$, where $i$ is the automorphism of $\T, \ i(t) = -t$.
}

\begin{prop}\label{char pseudo-r}
If $G$ is a  pseudo-rotation subgroup of $\Symp^\infty_\mu(S^2)$
which has a normal solvable subgroup then $G$ is isomorphic
to either a subgroup of the generalized dihedral group 
$\T \rtimes_\phi (\Z/2\Z)$ or a subgroup of the group $\O$
of orientation preserving symmetries of the regular octahedron
(or equivalently the orientation preserving symmetries of the cube).
\end{prop}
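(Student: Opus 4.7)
The plan is to invoke Lemma \ref{lem: F finite} to split into two cases, dispatch the generalized dihedral case directly, and then use the classification of finite groups acting on $S^2$ to handle the remaining finite case.

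First I apply Lemma \ref{lem: F finite}. In its case (1), there is a $G$-invariant set $F_0$ of two points, and $G$ is isomorphic to a subgroup of $A \rtimes_\phi (\Z/2\Z)$ for some subgroup $A \subset \T$; since such a group embeds canonically in $\T \rtimes_\phi (\Z/2\Z)$ (using the restriction of the involution $t \mapsto -t$), this already yields the first alternative of the conclusion. So I may assume we are in case (2): $G$ is finite and no $G$-invariant subset of $S^2$ has one or two points.

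Next I would identify $G$, up to isomorphism, with a finite subgroup of $SO(3)$. The most direct route is to average an arbitrary smooth Riemannian metric on $S^2$ over the finite group $G$ to obtain a $G$-invariant metric, then apply uniformization to conjugate this metric to a constant multiple of the round metric, thereby realizing $G$ as a finite subgroup of the isometry group $SO(3)$ of the round sphere. Alternatively one can cite the Brouwer--Ker\'ekj\'art\'o theorem that every finite subgroup of $\Homeo^+(S^2)$ is topologically conjugate to a subgroup of $SO(3)$. The classical classification of finite subgroups of $SO(3)$ then gives that $G$ is isomorphic to one of the cyclic groups $C_n$, a dihedral group $D_n$, the tetrahedral group $A_4$, the octahedral group $S_4$, or the icosahedral group $A_5$.

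Finally I apply the two remaining constraints. Each nontrivial $C_n$ and each $D_n$ preserves the pair of antipodal points fixed by its cyclic rotation subgroup, violating the case (2) condition. The group $A_5$ is simple and non-solvable, so its only normal solvable subgroup is trivial, contradicting the hypothesis that $G$ contains a nontrivial normal solvable subgroup. The remaining possibilities are $G \cong A_4$ or $G \cong S_4$, and both are subgroups of $\O \cong S_4$, which gives the second alternative of the conclusion. The main obstacle is the step identifying a finite subgroup of $\Symp^\infty_\mu(S^2)$ abstractly with a subgroup of $SO(3)$; the averaging-plus-uniformization argument is standard but must be cited carefully. A more self-contained alternative is to apply Riemann--Hurwitz directly to the branched cover $S^2 \to S^2/G$, which under the case (2) hypothesis forces the branch data to be $(2,3,3)$, $(2,3,4)$, or $(2,3,5)$ with $|G| = 12, 24, 60$ respectively; however, recovering the abstract isomorphism type of $G$ from its orbit data still essentially reduces to the $SO(3)$ classification.
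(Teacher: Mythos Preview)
Your proof is correct but follows a different route from the paper. Both arguments begin by invoking Lemma~\ref{lem: F finite} to reduce to the case where $G$ is finite and admits no invariant set of at most two points. From there you appeal to extrinsic machinery: averaging plus uniformization (or Brouwer--Ker\'ekj\'art\'o) to conjugate $G$ into $SO(3)$, then the classical list of finite rotation groups, eliminating $C_n$ and $D_n$ by the orbit condition and $A_5$ by simplicity. The paper instead stays inside the pseudo-rotation framework already built up: it shows, via Lemma~\ref{lem: abelian rot}, that the last nontrivial term $A$ of the derived series of the normal solvable subgroup must be the Klein four-group $\Z/2\Z \oplus \Z/2\Z$; the three fixed-point pairs $F_i$ of the involutions in $A$ give a canonical six-point $G$-invariant set. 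The paper then constructs by hand an injective homomorphism $\theta: G \to O(3)$ recording how $G$ permutes the three pairs with signs, and a case analysis using rotation numbers (Proposition~\ref{rho constant} and Corollary~\ref{fp-stabilizer}) forces $\det \theta(g) = 1$ for all $g$. Since $\theta(G)$ preserves the six octahedron vertices $(\pm 1,0,0), (0,\pm 1,0), (0,0,\pm 1)$, it lands in $\O$. Your approach is shorter and more conceptual but imports substantial outside results; the paper's is self-contained within its own dynamical toolkit, at the cost of a somewhat intricate determinant computation.
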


\begin{proof}
Suppose first that there is a $G$-orbit $F_0$ containing two 
points. Then by Proposition~\ref{lem: F finite}, $G$ is isomorphic to
a subgroup of the generalized dihedral group $\T \rtimes_\phi (\Z/2\Z)$ and
we are done.  If this is not the case, then by the same proposition $G$
is finite and every $G$-orbit contains at least three points. 

Let $A$ be the last non-trivial element of the derived series 
of the normal solvable subgroup of $G$, so $A$ is abelian. 
Since $A$ is a characteristic subgroup of that normal solvable subgroup it
is normal in $G$.  Hence for any
$g \in G, \  g(\Fix(A)) = \Fix(g A g^{-1}) = \Fix(A)$ and we observe 
$\Fix(A)$ cannot contain only two points since that is the case we already
considered. Therefore not all elements of $A$ have the same set of fixed
points and we can apply  Lemma~\ref{lem: abelian rot} to conclude the group
$A$ is isomorphic to $\Z/2\Z \oplus \Z/2\Z$.
{We observed in Remark~\ref{rmk: 6pts} that there is an $A$-invariant set
$F$ which is the union of three sets $F_i = \{a_i, b_i\},\  1\le i \le 3$
where $F_i = \Fix(h_i)$ and $h_i$ is one of the non-trivial elements
of $A$. We also noted there that the  elements of 
$A$ preserve the pairs $F_i$, fixing the points
of one of them and switching the points in each of the other pairs.
Since $A$ is normal in $G$, if $h_j = g^{-1}h_ig \in A$ 
then $F_j = \Fix(h_j) = g(\Fix(h_i) = g(F_i)$ so the 
elements of $G$ must permute the three pairs $F_i.$}

We define a homomorphism $\theta: G \to O(3)$ as
follows.  {Given $g \in G$ define a matrix $P = P_g$ by 
$P_{ij} = 1$ if $g(a_j) = a_i$ and 
$g(b_j) = b_i$, $P_{ij} = -1$ if $g(a_j) = b_i$ and
$g(b_j) = a_i$,} 
and $P_{ij} = 0$ otherwise. Each row and column has one non-zero
entry which is $\pm 1$ so $P_g \in O(3).$  It is straightforward to
see that $\theta(g) = P_g$ defines a homomorphism to $O(3)$.  It is
also clear that it is injective since $P_g = I$ implies that $g$
fixes the six points of $F$.  Note that if $h_i$ is a non-trivial
element of $A$, then  $\theta(h_i)$ must be diagonal
since $h_i(F_j) = F_j$, and $\theta(h_i)$ must have two entries
equal to  $-1$ since $h_i$ switches the points of two of the $F_j$'s.
It follows that $\theta(A)$ is precisely the
diagonal entries of $O(3)$ with an even number of $-1$'s.

We need to show that $\theta(G)$ lies in $SO(3)$, i.e., all its
elements have determinant $1$. Clearly
$\theta(A) \subset SO(3)$.  We denote the symmetric group on
three elements by $\cS_3$ and think of it as the permutation matrices
in $O(3)$. We define a homomorphism 
$\bar \theta : G \to \cS_3$ by $\bar \theta(g) = Q \in O(3)$ where
$Q_{ij} = |P_{ij}|$ and $P= \theta(g).$ We observe that $A$ is in the 
kernel of $\bar \theta.$  In fact $A$ equals the 
kernel of $\bar \theta$.  To see this note that if $\bar \theta(g) = I$ 
then $P_g = \theta(g)$ is a diagonal matrix and hence 
$\ker(\bar \theta)$ is abelian.  If $g \in \ker(\bar \theta) \setminus
A$ then the abelian group generated 
by $g$ and $A$ is not $\Z/2\Z \oplus \Z/2\Z$
contradicting Lemma~\ref{lem: abelian rot}. We conclude 
$A = \ker(\bar \theta)$. 

Elements of $\cS_3$ have order one, two or three.  In case 
$\bar \theta(g)$ has order one or three, $g^3$ is in $A$ and hence 
$\det(\theta(g)) = \det(\theta(g))^3 = \det(\theta(g^3))= 1$ 
so $\theta(g) \in SO(3)$.  

If $\bar \theta(g)$ has 
order two and $P_g = \theta(g)$ has determinant $-1$ we argue to a contradiction. 
In this case, without loss of generality we may assume 
\[
P_g =
\begin{pmatrix}
a & 0 & 0\\
0 & 0 & b\\
0 & c & 0\\
\end{pmatrix}
\]
where $\det(P_g) = -abc = -1.$ 
Hence $g^2 = id$. 

If $a =1$ then $bc = 1$ and $P_g^2 = I.$  We consider the $g$ invariant annulus $S^2 \setminus F_1$
and observe that in this annulus $\rho_\mu(g) = 1/2.$  But if $h_1 \in A$ is
the element with $\theta(h_1) = diag(1, -1, -1)$ then $h_1(U) = U$ and  
$\rho_\mu(h_1) = 1/2.$ Since $g$ and $h$ both fix $F_1$ pointwise
they commute by Corollary~\ref{fp-stabilizer} 
and by Proposition~\ref{rho constant}
$\rho_\mu$ is defined and injective on the 
subgroup generated by $g$ and $h_1$.
We conclude $g = h_1$ a contradiction.
Finally suppose $a = -1$ and $bc=-1$.  If $h_2 \in A$ is
the element with $\theta(h_2) = diag(-1, 1, -1)$ then 
\[
\theta(h_2 g) =
\begin{pmatrix}
1 & 0 & 0\\
0 & 0 & b\\
0 & -c & 0\\
\end{pmatrix},
\]
so by the previous argument $\det(\theta(h_2 g)) = 1$. Since
$\det(\theta(h_2)) = 1$ this contradicts the assumption
that  $\det(\theta(g)) = -1$. This completes the proof that 
$\theta(G) \subset SO(3)$.

Since $\theta(G)$ preserves the 
set $\{(\pm 1, 0, 0), (0, \pm 1, 0), (0, 0, \pm 1)\}$ of vertices
of the regular octahedron, it follows that $\theta(G)$ is a subgroup of
the group $\O$ of orientation preserving symmetries of the octahedron.

\end{proof}

\section{Proof of the Main Theorem}\label{main section}

In this section we prove 
\vspace{.1in}

\noindent
{\bf Theorem~(\ref{main thm})}
{\em Suppose $M$ is a compact oriented surface of genus $0$
and  $G$ is a subgroup
 of $\Symp^\infty_\mu(M)$ which has full support of finite type, 
e.g. a subgroup of $\Symp^\omega_\mu(M)$.  Suppose further that $G$ has an infinite normal solvable subgroup. Then $G$ is virtually abelian.
}

\vspace{.1in}

Throughout this section $M$ will denote 
a compact oriented surface of genus $0$, perhaps with boundary.
We begin with a lemma that allows us to replace the hypothesis that $G$ has an infinite normal solvable subgroup with the hypothesis that $G$ has an infinite normal abelian subgroup.

\begin{lemma}\label{solv to abel}
If $G$ is a  subgroup of  $\Symp^\infty_\mu(M)$,  which contains an
infinite normal solvable subgroup, then $G$ contains a finite
index subgroup which has an infinite normal abelian subgroup.
\end{lemma}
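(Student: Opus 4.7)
The plan is to descend the derived series of $N$ until we reach a term with finite commutator subgroup, and then to extract an abelian characteristic subgroup from the resulting two-step nilpotent structure.

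Write $N = N^{(0)} \supsetneq \cdots \supsetneq N^{(d)} = 1$ for the derived series; each $N^{(i)}$ is characteristic in $N$ and hence normal in $G$. Let $j$ be the largest index for which $B := N^{(j)}$ is infinite, so that $K := N^{(j+1)}$ is finite, $B/K$ is abelian, and both $B$ and $K$ are normal in $G$. If $K$ is trivial, then $B$ is already an infinite normal abelian subgroup and we take $G_1 := G$. Otherwise, since $K$ is finite, the conjugation homomorphism $G \to \Aut(K)$ has finite image, and its kernel $G_1 := \Cent_G(K)$ is a finite-index normal subgroup of $G$ in which $K$ is central. Set $B_1 := B \cap G_1 = \Cent_B(K)$; this has finite index in $B$ (hence is infinite) and is normal in $G_1$ as an intersection of $G$-normal subgroups. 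A direct computation yields $[B_1, B_1] \subseteq K \cap \Cent_B(K) = Z(K)$, and since $B_1$ centralizes $K$ one also has $Z(K) \subseteq Z(B_1)$. Thus $B_1$ is nilpotent of class at most two, with finite central commutator subgroup.

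Set $n := |[B_1, B_1]|$. Because $[B_1, B_1] \subseteq Z(B_1)$, the commutator map $(x,y) \mapsto [x,y]$ on $B_1$ is bilinear in each argument; and because every element of $[B_1, B_1]$ has order dividing $n$ by Lagrange, this gives $[x^n, y] = [x, y]^n = 1$ for all $x, y \in B_1$. Hence every $x^n$ lies in $Z(B_1)$, so the subgroup $B_1^n$ generated by $n$-th powers of elements of $B_1$ is contained in $Z(B_1)$; in particular $B_1^n$ is abelian. It is also characteristic in $B_1$ (being generated by an automorphism-invariant set) and therefore normal in $G_1$. If $B_1^n$ is infinite, then $B_1^n$ is the desired infinite normal abelian subgroup of the finite-index subgroup $G_1$ of $G$, and the proof is complete.

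The main obstacle is the remaining case in which $B_1^n$ is finite. Then $B_1/B_1^n$ has exponent dividing $n$, and $B_1$ itself is an infinite, two-step nilpotent, locally finite subgroup of $\Symp^\infty_\mu(M)$ of bounded exponent. To handle this case one must appeal to the dynamical structure of volume-preserving diffeomorphisms of $M$: for instance, that finite subgroups of $\Symp^\infty_\mu(M)$ are smoothly conjugate into a compact group of isometries of $M$, and that an infinite abelian subgroup of $\Symp^\infty_\mu(M)$ necessarily has unbounded exponent. These constraints force $B_1$ to admit a common invariant structure (e.g.\ a shared fixed point or an invariant essential annulus) which, after possibly passing to a further finite-index subgroup of $G_1$, yields the required infinite normal abelian subgroup.
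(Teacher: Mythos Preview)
Your group-theoretic reduction is sound through the construction of $B_1^n$: passing to $G_1 = \Cent_G(K)$, showing $B_1$ is nilpotent of class at most two with $[B_1,B_1]$ finite and central, and then using bilinearity of the commutator to get $B_1^n \subseteq Z(B_1)$ are all correct. The problem is the final case, where $B_1^n$ is finite. Here you do not give a proof; you only assert that ``dynamical constraints'' will produce the desired abelian subgroup. As written this is a gap, not an argument. Note that your stated constraint---``an infinite abelian subgroup of $\Symp^\infty_\mu(M)$ necessarily has unbounded exponent''---even if true, does not apply directly: $B_1$ is class-two nilpotent, not abelian, and the obvious abelian pieces you have in hand ($[B_1,B_1]$, $B_1^n$, and a priori $Z(B_1)$) are all finite in this case, while the infinite abelian quotient $B_1/[B_1,B_1]$ is not a subgroup of $\Symp^\infty_\mu(M)$. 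So you would need a genuinely new argument to rule out or exploit an infinite, bounded-exponent, locally finite, class-two nilpotent subgroup of $\Symp^\infty_\mu(M)$, and you have not supplied one.

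The paper's proof avoids this difficulty entirely by a different mechanism: induction on the derived length of $N$. Let $A$ be the \emph{last} non-trivial term of the derived series (so $A$ is abelian, and characteristic in $N$, hence normal in $G$). If $A$ is infinite one is done. If $A$ is finite, average the metric so $A$ acts by isometries, let $F$ be the finite set of fixed points of non-trivial elements of $A$, and pass to the finite-index normal subgroup $G_0$ of $G$ fixing $F$ pointwise. With $N_1 = N \cap G_0$, the last term $A_1$ of the derived series of $N_1$ lies in $A \cap G_0$; the point is that $A_1$ is \emph{trivial}. This is where the geometry enters: if $|F| > \chi(M)$, a non-trivial isometry isotopic to the identity has exactly $\chi(M)$ fixed points (Lefschetz), contradiction; if $|F| = \chi(M)$, blow up $F$ to get an annulus, observe that elements of $A_1$ are commutators and hence have mean rotation number zero, so by Lemma~\ref{int rho} they fix an interior point and again Lefschetz forces triviality. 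Thus the derived length of $N_1$ is strictly smaller than that of $N$, and induction finishes. The paper's scheme therefore never needs to analyze bounded-exponent phenomena; it trades your purely group-theoretic endgame for a short Lefschetz/rotation-number computation that makes the derived length drop.
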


\begin{proof}
Let $N$ be the infinite normal solvable subgroup.  The proof is by
induction on the length $k$ of the derived series of $N$.  If $k=0$
then $N$ is abelian and the result holds.  Assume the result holds for
$k \le k_0$ for some $k_0 \ge 0$ and suppose the length of the derived
series for $N$ is $k = k_0 + 1$.  Let $A$ be the abelian group which
is the last non-trivial term in the derived series of $N$.  The group
$A$ is invariant under any automorphism of $N$ and hence is normal in
$G$. If $A$ is infinite we are done.

We may therefore assume $A$ is finite  and hence that  in a suitable
metric $A$ is a group of isometries.  No non-trivial orientation preserving
isometry of $M$ can have infinitely many fixed points so we 
let $F$ be the finite set  of fixed
points of non-trivial elements of $A$. Since $A$ is normal $g(F) = F$
for any $g \in G.$   Let $G_0$ be the {normal finite index subgroup} 
of $G$ which pointwise fixes $F.$   

{Let $N_1 = G_0 \cap N$ so $N_1$ is an infinite solvable normal  
subgroup of $G_0$}
and the $k$-th term $A_1$ in the derived series of $N_1$ is contained
in $A \cap G_0$.  We claim that $A_1$ is trivial.  If $F$ contains more
than $\X(M)$ points  then this follows from the fact that a non-trivial
isometry isotopic to the identity fixes exactly $\X(M)$ points since
each fixed point must have Lefschetz index $+1$.  

 Otherwise $F$ contains $\X(M)$ points and we may
blow up these points to form an annulus $\A$ on which $N_1$ acts
preserving each boundary component.  Since each element of $A_1$ is a
commutator of elements in $N_1$, it acts on $\A$ with mean rotation
number zero. Each element of $A_1$ therefore contains a fixed point in
the interior of $\A$ by Lemma~\ref{int rho}.  {However, a 
finite order isometry of the annulus which is isotopic to the identity
and contains an interior fixed point must be the identity. This
is because every fixed point must have Lefschetz index $+1$ and the Euler
characteristic of $\A$ is $0.$}  This shows that $A_1$ acts trivially
on $\A$ and hence on $M$. This verifies the 
claim {that $A_1$ is trivial} and hence
the length of the derived series for $N_1$ is at most $k_0$.  The
inductive hypothesis completes the proof.
\end{proof}


The next lemma states the condition we use to prove that   $G$ is virtually abelian.

\begin{lemma}  \label{ending lemma} Suppose $G_0$ is a subgroup of $\Symp^\infty_\mu(M)$ which has full support of finite type.  Suppose further that there is an infinite family of disjoint  $G_0$-invariant open annuli.  Then $G_0$ is abelian.
\end{lemma}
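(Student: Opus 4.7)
Let $f, g \in G_0$; I will show $h := [f,g] = id$, which establishes that $G_0$ is abelian.

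For each invariant annulus $U_i$, the assignment $G_0 \to \Z/2\Z$ recording whether an element swaps the two ends of $U_i$ is a homomorphism, so the commutator $h$ preserves each end, and its extension $h_c: U_{i,c} \to U_{i,c}$ is isotopic to the identity. The mean rotation number is a homomorphism into the abelian group $\T$, so $\rho_\mu(h, U_{i,c}) = 0$. Lemma~\ref{int rho} then produces a fixed point $p_i$ of $h$ in $U_i$. Since the $U_i$ are pairwise disjoint, $\Fix(h)$ contains infinitely many points.

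The next step is to rule out $h \ne id$. If $h$ had finite order, averaging a Riemannian metric would realize it as a finite-order orientation-preserving isometry isotopic to the identity, hence conjugate to a rotation of $M$; such a map has at most $\X(M) \le 2$ fixed points, contradicting the infinitude of the $p_i$. If $h$ had positive topological entropy, Katok's theorem would provide a hyperbolic saddle fixed point; but the $p_i$ must accumulate somewhere in $\Fix(h)$, and the non-isolation argument from the proof of Proposition~\ref{prop cyclic} forbids a non-isolated hyperbolic fixed point in a $C^2$ diffeomorphism. Hence $h$ has infinite order, entropy zero, and more than $\X(M)$ periodic points, so $h \in \zz(M)$.

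By Corollary~\ref{Z centralizer}(3) the collection $\cA_h$ of maximal $h$-invariant annuli in $\Int(M) \setminus \Fix(h)$ is finite. For each $i$, since $h$ has full support, $U_i \setminus \Fix(h)$ is a nonempty open subset of $U_i$; I would select a component $W_i$ of $U_i \setminus \Fix(h)$ essential in $U_i$, which after passing to an appropriate iterate of $h$ is $h$-invariant, and so lies in some $V_i \in \cA_h$. Pigeonholing into the finite collection $\cA_h$, some $V \in \cA_h$ contains $W_{i_k}$ for infinitely many indices $i_k$.

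The main obstacle I expect is the last step: turning this pigeonhole into a genuine contradiction. The $W_{i_k}$ are pairwise disjoint, essential in their own $U_{i_k}$, and sit in a single maximal annulus $V$; this is not absurd a priori, since an annulus can accommodate infinitely many disjoint essential subannuli. The contradiction must come from combining the $G_0$-invariance of each $U_{i_k}$ (not merely $h$-invariance), the area-preserving character of $G_0$, and the nonconstancy of $\rho_h$ on $V$ from Theorem~\ref{max-annuli}(4)---essentially producing infinitely many distinct components of $\Fix(h)$ adjacent to the $W_{i_k}$'s, contradicting the finite-type assumption. Once the contradiction is obtained, $[f,g] = id$ for every $f, g \in G_0$ and $G_0$ is abelian.
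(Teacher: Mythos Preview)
Your opening matches the paper: for each invariant annulus $V$ the commutator $h$ extends to $V_c$ with mean rotation number zero, and Lemma~\ref{int rho} supplies a fixed point of $h$ in $V$. After that there are two genuine gaps. The positive-entropy case is not ruled out: Katok produces \emph{some} hyperbolic saddle, but the accumulation point of your $p_i$ need not be that saddle nor hyperbolic at all, so nothing prevents $h$ from having an isolated saddle together with a non-hyperbolic fixed set elsewhere. And even granting $h\in\zz(M)$, the pigeonhole step is weaker than you acknowledge: a component $W_i$ of $U_i\setminus\Fix(h)$ need not lie in any element of $\cA_h$ (Theorem~\ref{max-annuli} guarantees only full-measure coverage), there is no evident reason an \emph{essential} such component exists, and as you say, packing infinitely many disjoint sets into one annulus is no contradiction by itself.

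The paper avoids the trichotomy entirely with a direct topological argument. The idea you are missing is that for each invariant $V$ the set $\fr(V)\cup\Fix(h|_V)$ is \emph{disconnected}: in the two-point end compactification $S$ of $V$, if $\Fix(h_S)$ were connected then each component of $S\setminus\Fix(h_S)$ would be an open disk, $h_S$-invariant by Brown--Kister, and Brouwer would force a fixed point inside it, a contradiction. With this in hand, pass to an infinite subfamily so that either every $V$ has a component of $\Fix(h)$ meeting both components of $\fr(V)$, or none does. In the first case the disconnectedness forces a further component of $\Fix(h)$ entirely inside each $V$; in the second, no single component of $\Fix(h)$ can meet more than two of the disjoint annuli. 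Either way $\Fix(h)$ has infinitely many components, contradicting finite type directly---no appeal to $\zz(M)$, $\cA_h$, or any case split on the dynamics of $h$ is needed.
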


\proof We assume that $[G_0,G_0]$ contains a non-trivial element $h$
and argue to a contradiction. For each $G_0$-invariant open annulus
$V$, let $V_c$ be its annular compactification and let $h_c :V_c \to
V_c$ be the extension of $h|_V$ 
{(see Definition~\ref{defn: annular comp}
or Definition~2.7 of \cite{FH-ent0} for details). 
Since $h$ is a commutator of
elements of $G_0$ and since $G_0$ extends to an action on $V_c$, $h_c$
is a commutator and so has mean rotation number zero.  Lemma~\ref{int
  rho} therefore implies that $\Fix(h) \cap V \ne \emptyset$.

By assumption, $\Fix(h)$ does not contain any open set and so
$\Fix(h|_V)$ is a proper subset of $V$.  We claim that $\fr(V) \cup
\Fix(h|_V)$ is not connected.  To see this, let $S$ be the end
compactification of $V$ obtained by adding two points, one for each
end of $V$ and let $h_S : S \to S$ be the extension of $h|_V$ that
fixes the two added points.  If $\fr(V) \cup \Fix(h|_V)$ is connected
then each component $W$ of $S \setminus \Fix(h|_S)$ is an open disk.
A result of Brown and Kister \cite{brnkist} asserts $W$ is
$h_S$-invariant.  However, then the Brouwer plane translation theorem
would imply that $h$ has a fixed point in $W$.  This contradiction
proves the claim.

By passing to a subfamily of the $G_0$-invariant annuli, we may assume
that the following is either true for all $V$ or is false for all $V$:
some component of $\Fix(h)$ intersects both components of the frontier
of $V$.  In the former case, the interior of each $V$ contains a
component of $\Fix(h)$.  In the latter case, no component of $\Fix(h)$
intersects more than two of the annuli in our infinite family.  In
both cases, $\Fix(h)$ has infinitely many components in contradiction
to the assumptions that $h$ is non-trivial and that $G_0$ has full
support of finite type.
\endproof

We need an elementary topology result.

\begin{lemma} \label{breve}  Suppose that $C\subset \Int(M)$ is a closed connected set which is nowhere dense and has two complementary components $U_1$ and $U_2$.    Then $C' = \fr(U_1) \cap\ \fr(U_2)$ is a closed connected set with two complementary components, {$U_1'$ and $U_2'$}, each of which has frontier $C'$ and is equal to the  interior of its closure. Moreover $U_i$ is dense
in $U_i'$.
\end{lemma}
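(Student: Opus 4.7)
The plan is a three-step argument: first decompose $C$ relative to the frontiers of $U_1$ and $U_2$, then build each $U_i'$ by adjoining to $U_i$ the part of $C$ accessible only from $U_i$, and finally prove $C'$ is connected via a Mayer--Vietoris argument after capping $M$ off to $S^2$.

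\textbf{Steps 1 and 2 (point-set construction).} Since each $U_i$ is open and $C$ is nowhere dense, $\fr(U_i)\subset C$ and $C=\fr(U_1)\cup\fr(U_2)$. Setting $A_i:=\fr(U_i)\setminus\fr(U_{3-i})$ partitions $C=C'\sqcup A_1\sqcup A_2$; I define $U_i':=U_i\cup A_i$. Each $x\in A_i$ lies outside $\cl(U_{3-i})$ and so has a neighborhood $V$ disjoint from $\cl(U_{3-i})$; any $y\in V\cap C$ must lie in $\fr(U_i)\setminus\fr(U_{3-i})=A_i$, so $V\subset U_i'$. Hence each $U_i'$ is open, $M=U_1'\sqcup C'\sqcup U_2'$, and $A_i\subset\cl(U_i)$, so $U_i\subset U_i'\subset\cl(U_i)$. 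Consequently $U_i'$ is connected (being sandwiched between a connected set and its closure), $U_i$ is dense in $U_i'$, $\cl(U_i')=\cl(U_i)$, and
\[
\fr(U_i')=\cl(U_i)\setminus U_i'=\fr(U_i)\setminus A_i=C'.
\]
Since every point of $C'$ lies in $\fr(U_{3-i}')$ and thus has every neighborhood meeting $U_{3-i}'$, no point of $C'$ is interior to $\cl(U_i')$, yielding $U_i'=\Int(\cl(U_i'))$.

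\textbf{Step 3 (connectedness of $C'$).} I embed $M$ in $S^2$ by capping off each boundary circle with an open disk. Since $\partial M\subset U_1\cup U_2$, each cap is absorbed by a single $U_i$, producing disjoint connected open sets $\hat U_1,\hat U_2\subset S^2$ with $S^2\setminus C=\hat U_1\sqcup\hat U_2$, $\fr_{S^2}(\hat U_i)=\fr_M(U_i)$, and the same set $C'$. Applying the preceding construction in $S^2$ produces connected closed sets $X=\cl(\hat U_1')$ and $Y=\cl(\hat U_2')$ satisfying $X\cup Y=S^2$ and $X\cap Y=C'$; the Mayer--Vietoris sequence in \v Cech cohomology,
\[
0\to H^0(S^2)\to H^0(X)\oplus H^0(Y)\to H^0(C')\to H^1(S^2)=0,
\]
forces $H^0(C')\cong\Z$, proving $C'$ is connected. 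The first two steps are essentially bookkeeping; the genuine content is this final one, which requires both a careful compactification (to handle $\partial M$) and a planar cohomological input (using \v Cech theory because the closed sets involved are not a priori CW).
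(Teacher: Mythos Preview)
Your proof is correct. For Steps~1--2 the paper takes a slightly different tack: instead of building $U_i'=U_i\cup A_i$ explicitly, it shows via an elementary path argument that any arc from $U_1$ to $U_2$ in $M\setminus C'$ must in fact meet $C'$, deduces from density of $U_1\cup U_2$ that $M\setminus C'$ has exactly two components, and then verifies the frontier and interior-of-closure claims essentially as you do. Your direct construction is a bit cleaner and avoids the path argument entirely. The real difference is Step~3: the paper's proof is actually silent on the connectedness of $C'$ (it establishes every other clause of the lemma but not that one), whereas you supply a genuine argument via the \v{C}ech Mayer--Vietoris sequence after capping off to $S^2$---equivalently, an appeal to the unicoherence of $S^2$ (if $S^2=X\cup Y$ with $X,Y$ closed and connected, then $X\cap Y$ is connected because $H^1(S^2)=0$). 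So your route is not only correct but in this respect more complete than the paper's; the use of \v{C}ech theory is appropriate since the closed sets in question need not be triangulable.
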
 

\proof To see that $C'$ separates $U_1$ and $U_2$, suppose that
$\sigma$ is a closed path in $M \setminus C'$ with initial endpoint
in $U_1$ and terminal endpoint in $U_2$.  Then $\sigma \cap \fr(U_2)
\ne \emptyset$ and we let $\sigma_0$ be the shortest initial segment
of $\sigma$ terminating at a point $x \in \sigma \cap \fr(U_2)$.  Each
$y\in \sigma_0 \setminus x$ has a neighborhood that is disjoint from
$U_2$.  Since $C$ has empty interior, every neighborhood of $y$ must
intersects $U_1$.  It follows that every neighborhood of $x$ must
intersect $U_1$ and hence that $x \in C'$.  This contradiction proves
that $C'$ separates $U_1$ and $U_2$.  Since   the union of
  $U_1$ and $U_2$ is dense in $M$, the components $V_1$ and $V_2$
of $M \setminus C'$ that contain them are the only components of
$M \setminus C'$.  Every neighborhood of a point in $C'$ intersects
both $U_1$ and $U_2$ and so intersects both $V_1$ and $V_2$.  Thus $C'
\subset \fr(V_1) \cap \fr(V_2)$.  Since $\fr(V_1), \fr(V_2) \subset
C'$ we have that $C' \subset \fr(V_1) \cap \fr(V_2) \subset \fr(V_i)
\subset C'$ and hence $C' = fr(V_1) = fr(V_2)$ 
which implies that $V_1$ and $V_2$ are the interior of
their closures. Since $C$ is nowhere dense in $M$ it follows
that  $U_1 \cup U_2$ is dense in $M$ and hence that 
$U_i$ is dense in $U_i'$.
\endproof

\begin{notn}  If $C$ and $C'$ are as in Lemma~\ref{breve} then we say that $C'$ is obtained from $C$ by {\em trimming}.
Recall that if $f \in \zz(M)$ and $U \in \cA_f$, then the
rotation number function $\rho = \rho_{f|_U}: U \to S^1$ is well defined
and continuous.  A component of a point pre-image  of $\rho$ is {\em a   level set for $\rho$} and is said to be {\em irrational} if its
$\rho$-image is irrational and to be {\em interior} if it is disjoint
from the frontier of $U$.  If $C$ is an interior level set which is nowhere
dense and $C'$ is obtained from $C$ by trimming
then we will call $C'$ a {\em trimmed level set}.  The collection of all trimmed
level sets for $f$ will be denoted $\C(f).$
\end{notn}

\begin{lemma} \label{disjoint or equal} Suppose $G$ is a
subgroup of $\Symp^\infty_\mu(M)$ containing an abelian normal
subgroup $A$, that $f \in \zz(M)$ lies in $A$ and that $U \in
\cA_f$.     Suppose further that  $C_1'$ and $C_2'$ are obtained from nowhere dense irrational interior level sets $C_1$ and $C_2$  for $\rho = \rho_{f|_U}$ by trimming.  Letting $B_i$ be the  component of $M\setminus (C_1'\cup C_2')$ with frontier $C'_i$ and $V$ be the component of $M\setminus (C_1'\cup C_2')$, 
with frontier $(C_1'\cup C_2')$, assume that
 \begin{enumerate}
\item $\mu(V) < \mu(B_1)<  \mu(B_2).$ \label{item:V is small} 
\item   $\mu(B_2 ) > \mu(M)/2.$ \label{item:more than half}  
\end{enumerate} 
Then for all $g \in G$,     either $g(V) \cap V= \emptyset$  or $g( V) = V$.   
In particular, there is a finite index subgroup of $G$ that preserves $V$.
\end{lemma}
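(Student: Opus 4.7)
For the ``In particular'' clause, I would use a standard orbit-stabilizer argument: the $G$-orbit of $V$ consists of pairwise disjoint-or-equal open sets of common measure $\mu(V) > 0$, so the orbit is finite (of size at most $\mu(M)/\mu(V)$), and the stabilizer of $V$ in $G$ then has finite index.

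For the main disjoint-or-equal statement, the plan is to argue by contradiction: assume $g(V)\cap V\ne\emptyset$ and $g(V)\ne V$, and derive an impossible measure configuration. First, since $g\in G\subset \Symp^\infty_\mu(M)$ is measure preserving and carries the partition $M = B_1 \sqcup V \sqcup B_2 \sqcup (C_1'\cup C_2')$ to the partition $M = g(B_1)\sqcup g(V)\sqcup g(B_2)\sqcup g(C_1'\cup C_2')$ of the same combinatorial shape, corresponding pieces have equal measure. The frontier sets $C_i'$ and $g(C_i')$ are $\mu$-null (this uses that $C_1,C_2$ are nowhere-dense irrational level sets of $\rho_f$, combined with the trimming construction; I would either invoke or reproduce the corresponding fact established in earlier sections). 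Both $g(V)$ and $V$ are then connected open sets of common positive measure, each of whose complement consists of two connected components (the closures of $B_1,B_2$ and of $g(B_1),g(B_2)$ respectively).

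Hypotheses \pref{item:V is small} and \pref{item:more than half} combine to give the sharp inequality $\mu(B_2) > \mu(B_1)+\mu(V)$. Since $\mu(g(B_2))=\mu(B_2) > \mu(M)/2$, we obtain $\mu(B_2\cap g(B_2)) \ge 2\mu(B_2) - \mu(M) > 0$, so $B_2$ and $g(B_2)$ have a non-empty open intersection. I would then consider the $3\times 3$ intersection matrix $A_{ij} = \mu(X_i\cap g(X_j))$ with $(X_1,X_2,X_3) = (B_1,V,B_2)$; by measure preservation its row sums and column sums are both equal to $(\mu(B_1),\mu(V),\mu(B_2))$, three distinct values.

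The crux, and the main obstacle, is a topological step: since $V$ is connected and $M\setminus\overline{V}$ has the two connected components $B_1$ and $B_2$, and $g(V)$ has the analogous property, the assumption $0 < A_{22} < \mu(V)$ should be shown incompatible with $g(V)$ being connected and $g(B_1),g(B_2)$ lying on opposite sides of $g(V)$. The idea is that, under the contradiction hypothesis, $g(V)$ must bleed out of $V$ into $B_1$ or $B_2$ (forcing $A_{12}>0$ or $A_{32}>0$), and tracking the corresponding behaviour of $g(B_1)$ and $g(B_2)$ via the row and column constraints together with the sharp inequality $\mu(B_2) > \mu(B_1)+\mu(V)$ should produce a contradiction. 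I expect the technical difficulty to lie precisely in exploiting this separation-connectivity structure (with possibly wild frontier $C_1'\cup C_2'$, controlled only by the trimming) to rule out all ``partial overlap'' configurations of $V$ with $g(V)$.
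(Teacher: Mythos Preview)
Your ``In particular'' clause is fine: once the main disjoint-or-equal statement is established, the $G$-orbit of $V$ consists of pairwise disjoint open sets of the same positive measure, hence is finite, and the stabilizer has finite index.

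The main argument, however, has a genuine gap. You are attempting a purely measure-theoretic and topological argument, using only that $g$ is area-preserving and that $V,B_1,B_2$ form a decomposition of $M$ with the stated measure inequalities. But such an argument cannot succeed, because the conclusion is \emph{false} at that level of generality. For a concrete counterexample, take $M=S^2$ with the standard area form, let $C_1',C_2'$ be two circles of latitude chosen so that the band $V$ and caps $B_1,B_2$ satisfy \pref{item:V is small} and \pref{item:more than half}, and let $g$ be a rotation about an axis perpendicular to the polar axis. Then $g(V)$ is a tilted band which overlaps $V$ without being equal to it or disjoint from it. Your $3\times 3$ intersection matrix and separation/connectivity analysis cannot rule this out, because there is nothing to rule out. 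Notice that your sketch never invokes the hypotheses that $A$ is abelian and normal in $G$, that $f\in\zz(M)$, or that $C_1,C_2$ are \emph{irrational} level sets; these are not decorative.

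The paper's proof uses exactly these hypotheses. Normality and abelianness of $A$ give that $h=g^{-1}fg$ commutes with $f$; Corollary~\ref{Z centralizer} then forces $h^n(U)=U$ for some $n$, and $h^n$ preserves every level set of $\rho_{f|_U}$, hence $g(V),g(B_1),g(B_2)$ are $f^n$-invariant. This $f$-invariance is the missing ingredient: any component $W$ of $g(V)\cap V$ (or of $g(B_i)\cap V$) is eventually $f$-periodic, and if $W$ is inessential in $U$ then Lemma~\ref{lem: periodic} forces $\rho_f$ to be \emph{rational} on a full-measure subset of $W$, contradicting continuity of $\rho_f$ together with irrationality on $C_i'$. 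The measure inequalities \pref{item:V is small} and \pref{item:more than half} are then used, as you anticipated, to control the essential case and close the argument. (A minor side remark: nowhere density alone does not give $\mu(C_i')=0$; the paper does not need or claim this here.)
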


\begin{proof} 
We assume that $g(V) \cap V\ne \emptyset$ and $g( V) \ne V$ and argue
to a contradiction.  Since $A$ is normal and abelian, $h = g^{-1}fg$
commutes with $f$.  Corollary~\ref{Z centralizer} part (1) implies
that $h$ permutes the elements of $\cA_f$ and hence that $h^n(U) = U$
for some $n \ge 1$. It then follows from Corollary~\ref{Z centralizer}
part (2) that $h^n$ preserves each level set 
for $\rho = \rho_{f|_U}$, and hence
each trimmed level set for $\rho$, and so preserves $V, B_1$ and
$B_2$.  Equivalently, $g(V), g(B_1)$ and $g(B_2)$ are $f^n$-invariant.

Since $g(V) \cap V \ne \emptyset$ and $g(V) \ne V$, there is a
component $W$ of $g(V) \cap V$ such that $\fr(W) \cap \ \fr(V)\ne
\emptyset$.  To see this observe that otherwise one of
$V$ and $g(V)$ would properly contain the other
 which would contradict the fact that $g$ preserves $\mu.$ 
Since $f^n$ preserves both $V$ and $g(V)$, it
permutes the components of their intersection.  Since $f$ preserves
area, $f^m(W) = W$ for some $m>0.$ If every simple closed curve in $W$
is inessential in $V$ then $\rho$ has a constant rational value
on a dense subset of $W$ by Lemma~\ref{lem: periodic}.
This contradicts the fact that $\rho$ is continuous on $U$ and
takes only irrational values on $\fr(V)$. We conclude that there is a
simple closed curve $\alpha \subset W$ which is essential in
$V.$  Let $\beta = g^{-1}(\alpha) \subset V$.
 Item~\pref{item:V is small} implies that
$\beta$ is also essential in $V$ because if it were inessential
the component of its complement which lies in $V$ would contain
either $g^{-1}(B_1)$ or $g^{-1}(B_2).$ 

Let $B_i'$ be the subsurface of $M$ bounded by $\beta$ that contains
$B_i$.  Item \pref{item:more than half} rules out the possibility that
$B_2 \subset g(B_1')$ so it must be that $B_1 \subset g(B_1')$ and
$B_2 \subset g(B_2')$.  Item \pref{item:V is small} therefore implies
that $B_i \cap g(B_i) \ne \emptyset$.  If there is a simple closed
curve in $B_i$ whose $g$-image is in $V$ and essential in $V$ then there is a
proper subsurface of $B_i$ whose $g$ image contains 
$B_i$ in contradiction to the
fact that $g$ preserves area.  It follows that either $g(B_i) = B_i$
 or there is a component  $W_i'$ of $g(B_i) \cap V$ which
is inessential in $V$ and whose
frontier intersects $C_i'$.  As above, this implies that $\rho$ is
constant and rational on $W'$ in contradiction to the fact that $\rho$
is continuous on $U$ and takes an irrational value on $C_i'$.
This contradiction completes the proof.
\end{proof}

 We are now prepared to complete the proof of our main theorem.
\bigskip

\noindent
{\bf Theorem~(\ref{main thm})}
{\em 
Suppose $M$ is a compact oriented surface of genus $0$
and  $G$ is a subgroup
of $\Symp^\infty_\mu(M)$ which has full support of finite type,
e.g., a subgroup of $\Symp^\omega_\mu(M)$.  Suppose further that $G$ has 
an infinite normal solvable subgroup. Then $G$ is virtually abelian.
}

\begin{proof}
 By Lemma~\ref{solv to abel} it suffices to prove the result when $G$
has an infinite normal abelian subgroup $A$. 
If $A$ contains an element of
positive entropy the result follows by Proposition~\ref{A +entropy}.
If the group $A$ is a pseudo-rotation group the result follows from 
Proposition~\ref{A pseudo-r}. 

We are left with the case that $A$ contains an element $f \in \zz(M)$.
Let $U$ be an element of $\cA_f$ and let $\rho = \rho_{f|_U}$.  Since there
are only countably many level sets of $\rho$ with positive measure, all but
countably many have empty interior or equivalently are
nowhere dense.  Choose a nowhere dense interior irrational level set
$C$ of $\rho$.  One component of $M \setminus C$, say, $Y$, will be an
open subsurface with $\mu(Y) \le \mu(M)/2.$ Choose a nowhere dense
interior irrational level set $C' \subset Y \cap U$ and let $Y'$ be
 the open subsurface which is the component 
of the complement of $C'$ satisfying $Y' \subset Y$.
Finally, choose nowhere dense interior irrational level sets $\hat
C_1, \hat C_2 \subset (Y \setminus \cl(Y'))$ so that the annulus $\hat
V$ bounded by the trimmed sets $\hat C_1'$ and $\hat C_2'$ has measure
less than the measure of $Y'$.  Then $\hat C_1'$ and $\hat C_2'$
satisfy the hypotheses of Lemma~\ref{disjoint or equal}.  It follows
that the subgroup $G_0$ of $G$ that preserves $\hat V$ has finite
index in $G$.

Note that any two trimmed irrational level sets $C_1'$ and $C_2'$ in
$\hat V$ satisfy the hypotheses of Lemma~\ref{disjoint or equal}.
Moreover if $V$ is the essential open subannulus bounded by $C_1'$ and
$C_2'$ then $g(V) \cap V \ne \emptyset$ for each $g \in G_0$ because
each such $g$ preserves $\hat V$ and preserves area.  Lemma~\ref{disjoint
  or equal} therefore implies that each such $V$ is $G_0$-invariant so
Lemma~\ref{ending lemma} completes the proof.
\end{proof}

\section{The Tits Alternative}

We assume throughout this section that $M$ be a compact oriented surface of genus $0$, ultimately proving a special case (Theorem~\ref{tits special case}) of the Tits Alternative.


\begin{lemma}\label{abelian case} 
Suppose $f \in \cZ(M)$ and $U \in \cA(f)$ is a 
maximal annulus for $f.$  If $G$ is a subgroup of $\Symp^\omega_\mu(M)$ 
whose elements preserve each element of an infinite family
of  trimmed level sets for $\rho_{f|_U}$
lying in $U$, then $G$ is abelian.
\end{lemma}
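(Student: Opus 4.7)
The strategy is to produce from the given family an infinite collection of pairwise disjoint $G$-invariant open annuli in $M$ and then invoke Lemma~\ref{ending lemma}. Since $G \subset \Symp^\omega_\mu(M)$, Proposition~\ref{analytic} guarantees that $G$ has full support of finite type, so this is the only remaining hypothesis to arrange.

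The first step is to verify that the elements of the given family have the structure needed for Lemma~\ref{three components}. By Theorem~\ref{max-annuli}(\ref{item:rho non-constant}) each interior level set of $\rho = \rho_{f|_U}$ is a closed connected subset of $U$ that essentially separates the two ends of $U_c$, and by Lemma~\ref{breve} (together with the definition of trimming) the same properties pass to trimmed level sets. Distinct level sets are disjoint as components of preimages under $\rho$, so we may assume the trimmed level sets in our infinite family are pairwise disjoint closed connected essential separators of the ends of $U_c$. Two such separators are automatically comparable by nesting (connectedness forces one to lie in a complementary component of the other), giving a linear order on the family; from this I extract an infinite chain $C_1' < C_2' < C_3' < \cdots$ of trimmed level sets.

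Next I apply Lemma~\ref{three components} to each consecutive pair $(C_k', C_{k+1}')$ to see that $M \setminus (C_k' \cup C_{k+1}')$ has exactly three components, the unique one of which I call $V_k$ is an open annulus whose frontier is the full union $C_k' \cup C_{k+1}'$. Since $C_k' \cup C_{k+1}' \subset U$ and $V_k$ does not meet the ends of $U_c$, we have $V_k \subset U$. Monotonicity of the chain guarantees that $V_k$ and $V_\ell$ are separated by an intervening trimmed level set whenever $k \ne \ell$, so the $V_k$ are pairwise disjoint. Finally, every $g \in G$ preserves both $C_k'$ and $C_{k+1}'$ by hypothesis, hence preserves their union and permutes the three components of its complement; since $V_k$ is uniquely characterized among those components as the one whose frontier is the \emph{full} union $C_k' \cup C_{k+1}'$, we conclude $g(V_k) = V_k$. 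Thus $\{V_k\}_{k \ge 1}$ is an infinite family of pairwise disjoint $G$-invariant open annuli in $M$, and Lemma~\ref{ending lemma} yields that $G$ is abelian.

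The only point requiring any care is the verification that trimmed level sets inherit connectedness and essential separation from their untrimmed predecessors, so that Lemma~\ref{three components} applies cleanly; this is precisely what Lemma~\ref{breve} and Theorem~\ref{max-annuli}(\ref{item:rho non-constant}) give, after which the argument is a purely formal reduction to Lemma~\ref{ending lemma}.
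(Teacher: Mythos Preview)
Your proposal is correct and follows the same approach as the paper: manufacture an infinite family of pairwise disjoint $G$-invariant open annuli from consecutive pairs of the given trimmed level sets, then invoke Lemma~\ref{ending lemma}. The paper's proof is a two-sentence sketch of exactly this; you have simply made the details explicit (ordering the separators, applying Lemma~\ref{three components}, and checking $G$-invariance of the middle components). One small wording issue: Lemma~\ref{three components} only asserts that the frontier of the middle annulus is \emph{contained in} $C_k'\cup C_{k+1}'$, not that it equals the full union, but your conclusion $g(V_k)=V_k$ still follows since the two outer components are individually characterized by having frontier contained in a single $C_i'$, and $g$ preserves each $C_i'$ separately.
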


\begin{proof}
Since infinitely many of the trimmed level sets in $U$ are preserved
by $G$ so is each open annulus bounded by two such level sets.  It is
thus clear that one may choose infinitely many disjoint open annuli in
$U$ each of which is $G$-invariant.  The result now follows from
Lemma~\ref{ending lemma}.
\end{proof}

\begin{lemma} \label{trans num} 
Suppose $f: \A \to \A$ is a homeomorphism isotopic to the identity
with universal covering lift $\ti f: \ti \A \to \ti \A$ which has a 
non-trivial translation interval and suppose $\alpha \subset \A$ is an
embedded arc joining the two boundary components of $\A$.
Then there exists $m >0$ such that 
for any $\gamma \subset \A$ which is 
an embedded arc, disjoint from $\alpha$, 
joining the two boundary components of $\A$,
every  lift of $f^{k}(\gamma),\ |k| > m$ must intersect more than
one lift of $\alpha.$

\end{lemma}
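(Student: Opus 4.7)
The plan is to work in the universal cover $\ti\A = \R \times [0,1]$ of $\A = S^1 \times [0,1]$, with deck transformation $T(x,y) = (x+1,y)$, and argue by contradiction. Fix lifts $\ti\alpha$ of $\alpha$ and $\ti\gamma$ of $\gamma$. Because $\alpha$ is a compact embedded arc joining the two boundary components, the translates $T^n\ti\alpha$ form a pairwise disjoint family, and each strip of $\ti\A$ between consecutive translates has bounded $p_1$-extent by some constant $D$ depending only on $\alpha$. Since $\ti\gamma$ is disjoint from every $T^n\ti\alpha$, it lies in one such strip, say the one between $T^{n_\gamma}\ti\alpha$ and $T^{n_\gamma+1}\ti\alpha$.

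Suppose for contradiction that there exist $k$ with $|k|$ arbitrarily large for which some lift of $f^k(\gamma)$ meets at most one lift of $\alpha$. By $T$-equivariance, the offending lift may be taken to be $\ti f^k(\ti\gamma)$; this arc must then lie in the closure of at most two consecutive $\ti\alpha$-strips, so $p_1(\ti f^k(\ti\gamma)) \subseteq [m_k, m_k + D']$ for some integer $m_k$ and a constant $D'$ depending only on $\alpha$. The restriction of $\ti f$ to each boundary $\R \times \{i\}$ is a lift of a circle homeomorphism with well-defined rotation number $\rho_i$, and the classical bound $|p_1(\ti f^k(x,i)) - x - k\rho_i| \le 1$ applied to the two endpoints of $\ti\gamma$ yields $m_k = n_\gamma + k\rho_i + O(1)$ for $i = 0, 1$, with $O(1)$ depending only on $\alpha$.

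If $\rho_0 \ne \rho_1$, the two equations together force $|k(\rho_0 - \rho_1)| = O(1)$, giving an upper bound on $|k|$ depending only on $\alpha$ and $|\rho_0 - \rho_1|$. If instead $\rho_0 = \rho_1 = \rho$, I invoke the nontrivial translation interval hypothesis to fix once and for all a point $\ti x_* \in \ti\A$ whose $\ti f$-translation number $\tau^*$ exists and differs from $\rho$. Let $n_*$ index the $\ti\alpha$-strip containing $\ti x_*$; the unique $\gamma$-lift in that strip is $T^{n_* - n_\gamma}\ti\gamma$, and $\ti x_*$ lies between $T^{l_*}\ti\gamma$ and $T^{l_*+1}\ti\gamma$ for some $l_* \in \{n_* - n_\gamma - 1,\, n_* - n_\gamma\}$. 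Because $\ti f^k$ commutes with $T$, the image $\ti f^k(\ti x_*)$ lies between $T^{l_*}\ti f^k(\ti\gamma)$ and $T^{l_*+1}\ti f^k(\ti\gamma)$, so $p_1(\ti f^k(\ti x_*)) = m_k + l_* + O(1)$. Substituting $m_k = n_\gamma + k\rho + O(1)$ cancels $n_\gamma$, yielding $p_1(\ti f^k(\ti x_*)) = k\rho + n_* + O(1)$, with $O(1)$ depending only on $\alpha$ and $\ti x_*$. Combined with $p_1(\ti f^k(\ti x_*))/k \to \tau^* \ne \rho$, this bounds $|k|$ by a constant depending only on $\alpha$, $\ti x_*$ and $\tau^* - \rho$, contradicting the assumption that $|k|$ is arbitrarily large.

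The main obstacle is arranging $m$ to be uniform in $\gamma$. The key observation is the cancellation in the second case: although $n_\gamma$ and $l_*$ individually depend on $\gamma$, their difference $l_* - n_\gamma \in \{n_* - 1, n_*\}$ depends only on $\ti x_*$, so $n_\gamma$ disappears from the final inequality. A secondary technical point is extracting a concrete threshold from the limit $p_1(\ti f^k(\ti x_*))/k \to \tau^*$; since $\ti x_*$ is fixed in advance of $\gamma$, that threshold depends only on the fixed data $\ti x_*,\rho,\tau^*$ and contributes to $m$ without introducing any $\gamma$-dependence.
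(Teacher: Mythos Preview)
Your argument is correct and the uniformity in $\gamma$ is handled carefully, but the route is genuinely different from the paper's.  You split into the cases $\rho_0 \ne \rho_1$ and $\rho_0 = \rho_1 = \rho$, and in the second case you track a fixed interior reference point $\ti x_*$ with translation number $\tau^* \ne \rho$, showing that confining $\ti f^k(\ti\gamma)$ to two $\alpha$-strips would force $p_1(\ti f^k(\ti x_*)) = k\rho + O(1)$.  The paper instead never looks at boundary rotation numbers or reference points: it first proves (once and for all, depending only on $\alpha$) that for all large $k$ the arc $\ti f^k(\ti\alpha)$ must meet at least six translates of $\ti\alpha$, since otherwise the fundamental domain $D_0$ bounded by $\ti\alpha$ and $T\ti\alpha$ would have $\ti f^k(D_0)$ trapped in a bounded strip of translates, forcing the translation interval to degenerate.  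Then, given any $\gamma$ with $\ti\gamma \subset D_0$, the region $X$ between $T^{-1}\ti\gamma$ and $T\ti\gamma$ contains $\ti\alpha$; if some lift $\ti h$ of $f^k$ sent $\ti\gamma$ to meet at most one translate of $\ti\alpha$, then $\ti h(X)$ would lie in five translates of $D_0$, contradicting the stretching of $\ti h(\ti\alpha) \subset \ti h(X)$.  The paper's version is shorter and completely self-contained, while yours relies on the existence of a point with a well-defined translation number different from $\rho$; this does follow from the ergodic-theoretic description of the rotation set, but you should say a word about it rather than simply fixing $\ti x_*$.

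One small slip: you write ``their difference $l_* - n_\gamma \in \{n_*-1, n_*\}$'', but from $l_* \in \{n_* - n_\gamma - 1,\ n_* - n_\gamma\}$ it is the \emph{sum} $l_* + n_\gamma$ that lies in $\{n_*-1, n_*\}$.  Your actual substitution (adding $m_k = n_\gamma + k\rho + O(1)$ to $l_*$) uses the sum, so the cancellation of $n_\gamma$ is correct and only that explanatory sentence is misstated.
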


\begin{proof}

Let $\ti \alpha$ be a lift of $\alpha$  and let  $T$ 
be a generator for the group of covering translations of $\ti \A$. We claim that
there is an $m >0$ such that for all $k \ge m,$
  $\ti f^k(\ti \alpha)$   intersects
at least $6$ adjacent lifts of $\alpha$.  
To show the claim consider the fundamental domain $D_0$ 
bounded by $\ti \alpha$ and $T(\ti \alpha).$
If there are arbitrarily large $k$ for which  $\ti f^k(\ti \alpha)$ 
intersects fewer than six translates
of $\ti \alpha,$ then for such $k, \ \ti f^k(D_0)$ 
would lie in an adjacent strip  of five adjacent translates of $D_0$
and the translation interval for $\ti f$ would have length $0$.  This contradiction verifies the claim.

There is a lift  $\ti \gamma$ 
of $\gamma$ that is contained in $D_0$. Let $X$ be the  region   bounded by $T^{-1}(\ti \gamma)$ and $T(\gamma)$ and note that $\ti \alpha \subset X$.  If the lemma fails then we can choose   $k \ge m$ and a lift $\ti h $ of $f^k$ such that $\ti h(\ti \gamma)$  is disjoint from $T^i(\ti \alpha)$ for all $i \ne 0$.  It follows that   $\ti h(X)$ is contained in $\cup_{j=-2}^2 T^{j}(D_0)$ in contradiction to the fact that  $\ti h(\ti \gamma)$ intersects at least six translates of $\ti \alpha$. 
\end{proof}

\begin{lemma} \label{free case} 
{Suppose $f,g \in \cZ(M)$ and there are trimmed level sets
$C_1' \in \C(f)$ and $C_2' \in \C(g)$ such that  there exist points
$x_i \in M \setminus (C_1' \cup C_2'),\ 1 \le i \le 4$ with the
following properties:
\begin{enumerate}
\item $x_1,x_2$ lie in the same component of the complement of $C_1'$
and $x_3,x_4$ lie in the other component of this complement.
\item $x_1,x_3$ lie in the same component of the complement of $C_2'$
and $x_2,x_4$ lie in the other component of this complement.
\end{enumerate}
}
Then for some $n>0,$ the diffeomorphisms $f^n$ and $g^n$ generate
a non-abelian free group.
\end{lemma}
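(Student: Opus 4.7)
The plan is a ping-pong argument that tracks intersection numbers of iterates of a fixed arc with $C_1'$ and $C_2'$. Let $U_f \in \cA_f$ be the maximal annulus containing $C_1'$ and let $U_g \in \cA_g$ contain $C_2'$. By Theorem~\ref{max-annuli}\pref{item:rho non-constant} the rotation numbers $\rho_f, \rho_g$ are non-constant on $U_f, U_g$ respectively, so the canonical lifts of $f$ to the universal cover of $(U_f)_c$ and of $g$ to the universal cover of $(U_g)_c$ each have non-trivial translation intervals, making Lemma~\ref{trans num} applicable. The crossing of $x_1,\ldots,x_4$ forces $C_1' \cap C_2' \ne \emptyset$: otherwise the connectedness of $C_1'$ and $C_2'$ would place one of the four quadrants $A_i \cap B_j$ empty. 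In particular $C_2' \cap U_f$ contains subsets joining the two sides of $C_1'$ in $U_f$, and symmetrically in $U_g$.

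Fix an embedded arc $\sigma_0 \subset M$ from a point near $x_1$ to a point near $x_4$ that crosses $C_1'$ and $C_2'$ transversally in one point each. The main dynamical estimate, obtained by applying Lemma~\ref{trans num} in $(U_f)_c$, is that for $n$ sufficiently large the geometric intersection number $|f^{n}(\sigma) \cap C_2'|$ becomes arbitrarily large for any spanning arc $\sigma$ of $(U_f)_c$, since the wrapping of $f^n(\sigma)$ around $(U_f)_c$ forces it to cross the $C_1'$-transverse pieces of $C_2'\cap U_f$ many times; symmetrically $|g^{n}(\sigma) \cap C_1'|$ is large for spanning arcs of $(U_g)_c$. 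The complementary bookkeeping fact is that by Corollary~\ref{Z centralizer}\pref{item: inv level} applied with $g = f$, the map $f$ preserves every component of every level set of $\rho_f$, so $f(C_1') = C_1'$ and hence $|f^{\pm n}(\sigma) \cap C_1'| = |\sigma \cap C_1'|$ for all $n$, and analogously $|g^{\pm n}(\sigma) \cap C_2'| = |\sigma \cap C_2'|$.

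With these ingredients the ping-pong runs by induction on reduced word length. Given a nontrivial reduced word $w$ in $f^n, g^n$, apply $w$ to $\sigma_0$ one letter at a time from the right: each $f^{kn}$-block ($k \ne 0$) increases the intersection with $C_2'$ while preserving that with $C_1'$, and each $g^{mn}$-block increases the intersection with $C_1'$ while preserving the (now large) intersection with $C_2'$. Iteratively the relevant intersection numbers blow up, whereas for $\sigma_0$ itself both were $1$; in particular $w(\sigma_0) \ne \sigma_0$ and $w$ is not the identity. Hence $\langle f^n, g^n\rangle \cong F_2$.

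The main obstacle is making this intersection-number induction rigorous. Lemma~\ref{trans num} is cleanly stated on $(U_f)_c$, but an iterated image $w(\sigma_0)$ need not lie entirely inside $U_f$ or $U_g$, so one must control the portions outside the relevant annulus and verify that the wrapping contribution from inside dominates at each step of the induction, so that the intersection counts are not cancelled as arcs cross into and out of $U_f$ or $U_g$. Arranging a single large $n$ that achieves this simultaneously for all reduced words is the technical heart of the argument.
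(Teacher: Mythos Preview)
Your outline has the right ping-pong intuition, but the gap you yourself flag is genuine and is not resolved by the intersection-number bookkeeping you describe.  Two concrete problems:

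First, the sets $C_1',C_2'$ are closed connected continua, not curves, so ``geometric intersection number'' $|\sigma\cap C_i'|$ is not a well-defined integer invariant you can increment.  Even interpreting it as the number of components of $\sigma\setminus C_i'$, the claim that applying $f^n$ to an arc \emph{increases} its intersection with $C_2'$ needs $C_2'\cap U_f$ to contain a transversal to $(U_f)_c$ in the sense of Lemma~\ref{trans num}; the hypotheses only give $C_2'\cap C_1'\ne\emptyset$, which is weaker.  Second, and more seriously, Lemma~\ref{trans num} is about spanning arcs of a \emph{closed} annulus.  After one application of $g^{kn}$, the image arc need not contain any subarc spanning $(U_f)_c$, so you cannot re-apply Lemma~\ref{trans num} in $(U_f)_c$ at the next step.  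No uniform choice of $n$ repairs this, because the geometry of $w(\sigma_0)\cap U_f$ depends on the entire word $w$.

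The paper circumvents both issues by \emph{not} working with $C_1',C_2'$ or with intersection numbers.  It invokes the nested-annulus description of level sets from \cite{FH-ent0}, writing $C_i=\bigcap_n\cl(B_n^{(i)})$ with each $B_n^{(i)}$ an $f$- (resp.\ $g$-) invariant open annulus of non-trivial rotation interval.  Choosing $n$ large enough gives $f$-invariant $A_1\supset C_1$ and $g$-invariant $A_2\supset C_2$ together with fixed crossing arcs $\alpha_1\subset A_1\setminus A_2$ and $\alpha_2\subset A_2\setminus A_1$.  The ping-pong then tracks \emph{subarcs}: from any arc $\gamma$ spanning $A_1$ and disjoint from $\alpha_1$, Lemma~\ref{trans num} (applied once and for all in $A_1$) forces $f^k(\gamma)$ to contain a subarc which, together with part of $\alpha_1$, is essential in $A_1$; the crossing configuration of the $x_i$ then guarantees this subarc spans $A_2$.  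Swapping roles yields the induction.  The point is that at every step one extracts a new spanning subarc of the \emph{other} invariant annulus, so Lemma~\ref{trans num} always applies cleanly; nothing needs to be said about the portion of the arc outside $A_i$.  Your proposal is missing exactly this nested-annulus construction and the subarc-extraction mechanism it enables.
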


\begin{figure}[ht!]
\centering
\includegraphics[height=3.0in]{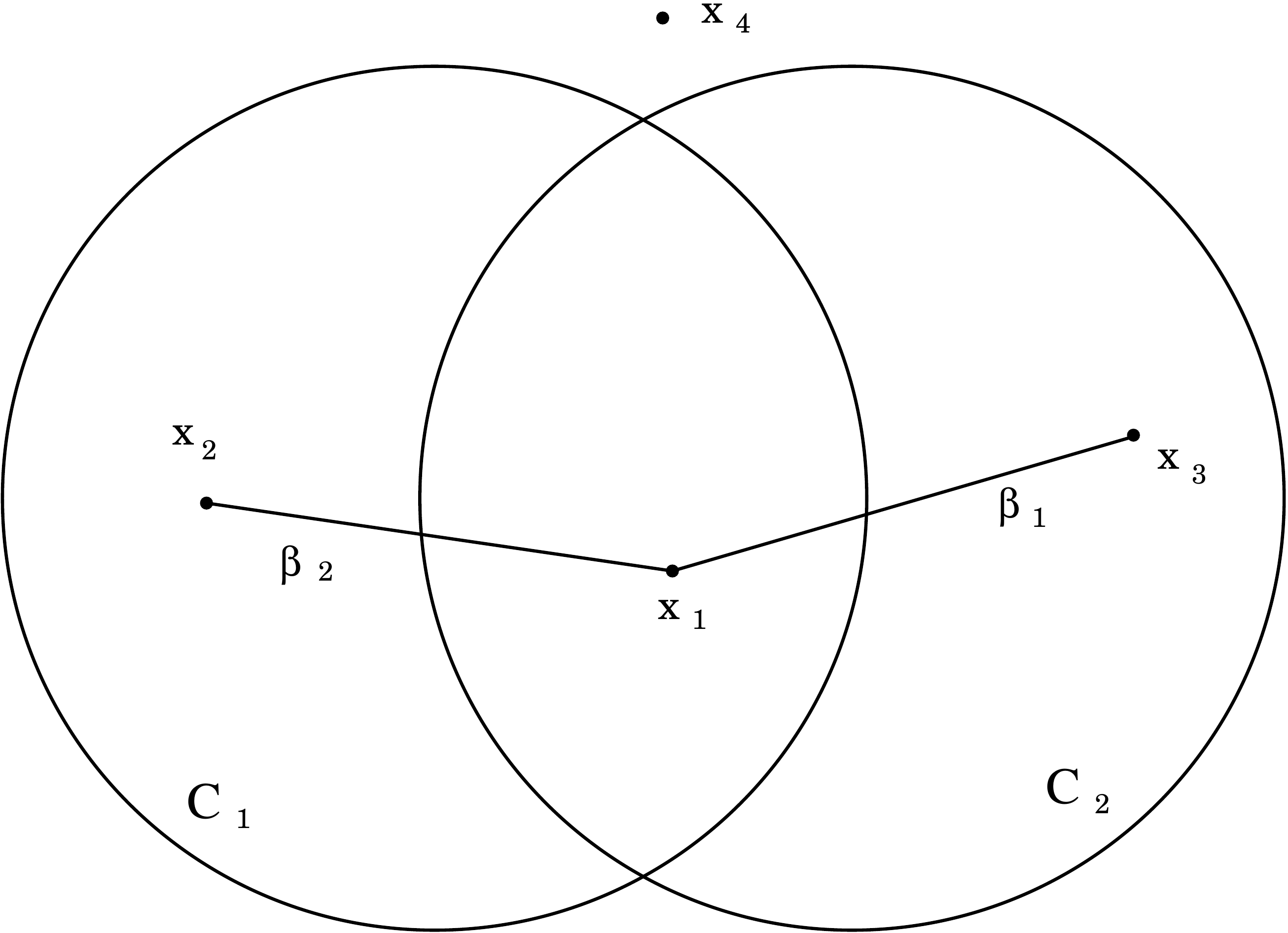}
\caption{The curves $\beta_1$ and $\beta_2$}
\label{fig1}
\end{figure}

\begin{proof}
Let $C_i$ be the untrimmed level set whose trimmed version is
$C_i'$.  We claim that by modifying the points $\{ x_i\}$ slightly
we may assume that the hypotheses (1) and (2) hold with $C_i'$ replaced
by $C_i$. This is because each component of the complement of 
$C_i$ is a dense open subset of a component of the complement of
$C_i'$.  Hence each $x_j$ can be perturbed slightly to $\hat x_j$ which,
for each $i$,  is in the
component of complement of $C_i$ which is a dense open subset 
of the component of the complement of $C_i'$ containing $x_j$.  Henceforth
we will work with $C_i$ and refer to $\hat x_j$ simply as $x_j$.

Let $\beta_1$ be a path in $M \setminus  C_2$ joining $x_1$ and $x_3$;
so $\beta_1$ crosses $ C_1$ and is disjoint from $ C_2.$ 
Likewise let $\beta_2$ be a path in $M \setminus  C_1$ joining $x_1$ and $x_2$;
so $\beta_2$ crosses $ C_2$ and is disjoint from $ C_1.$  (See Figure \ref{fig1}.)
The level set $ C_1$ is an intersection 
\[
 C_1 = \bigcap_{n=1}^\infty \cl(B_n),
\]
where each $B_n$ is an $f$-invariant open annulus with
$\cl(B_{n+1}) \subset B_n$ and the rotation interval 
$\rho(f, B_n)$ of the annular compactification $f_c$ of
$f|_{B_n}$ is non-trivial (see section 15 and the proof of 
Theorem~ 1.4 in \cite{FH-ent0}).  For $n$ sufficiently large
$\cl(B_n)$ is disjoint from $\beta_2$ and $\{x_i\}_{i=1}^4.$
Let $A_1$ be a choice of $B_n$ with this property.

We may choose a closed subarc $\alpha_1$ of $\beta_1$ whose
interior lies in $A_1$ and whose endpoints are in different
components of the complement of $A_1$.   We will use intersection
number with $\alpha_1$ with a curve in $A_1$ to get a lower
bound on the number of times that curve ``goes around'' the
annulus $A_1.$

\begin{figure}[ht!]
\centering
\includegraphics[height=3.5in]{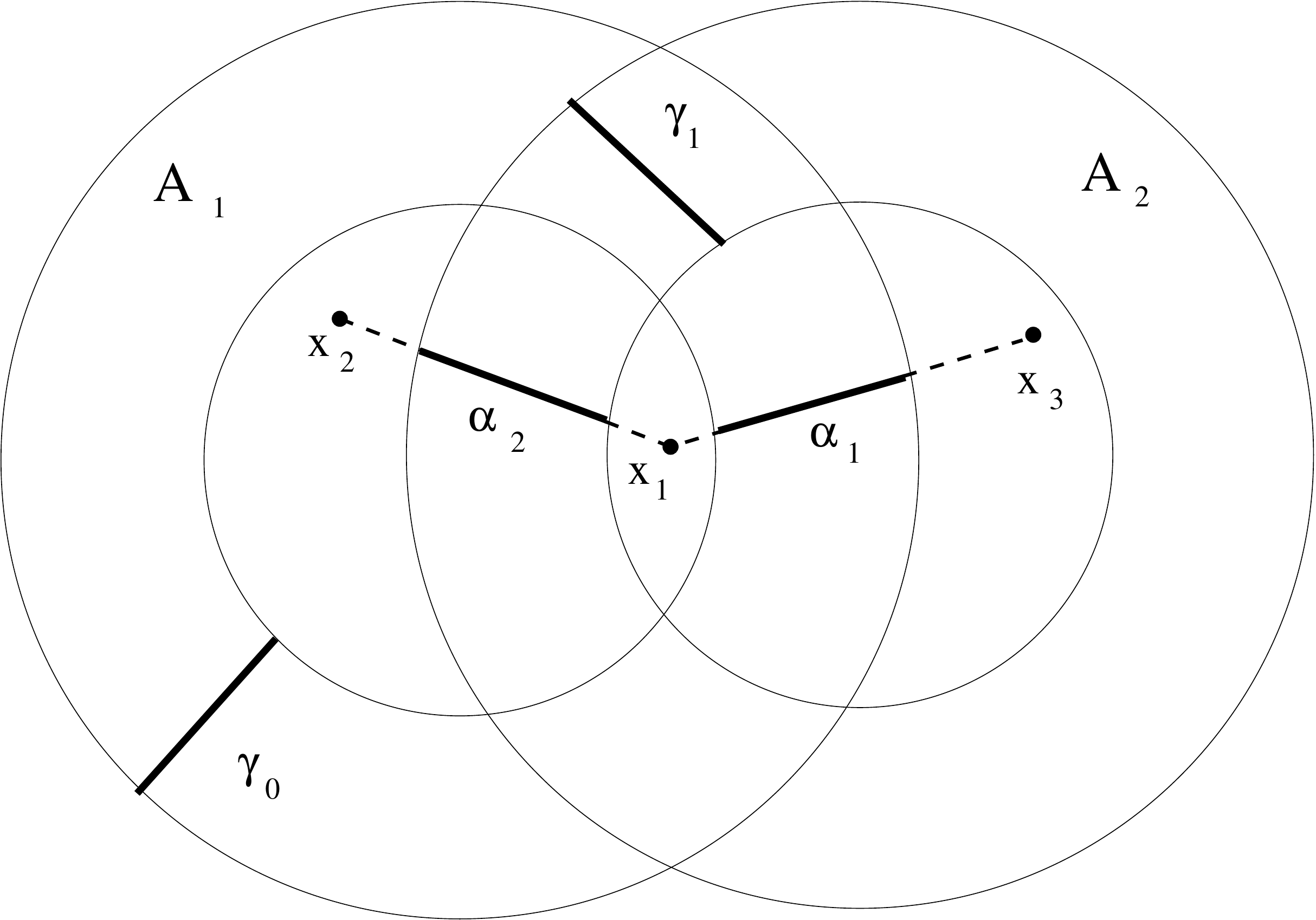}
\caption{The curves $\alpha_1,\ \alpha_2,\ \gamma_0$ and $\gamma_1$}
\label{fig2}
\end{figure}

Similarly the level set $ C_2$ is an intersection 
\[
 C_2 = \bigcap_{n=1}^\infty \cl(B_n'),
\]
where each $B_n'$ is a $g$-invariant open annulus with
$\cl(B_{n+1}') \subset B_n'$ and the rotation interval 
$\rho(g, B_n')$ of the annular compactification $g_c$ of
$f|_{B_n'}$ is non-trivial.
We construct $A_2$ and the arc $\alpha_2$ in a fashion
analogous to the construction of $A_1$ and $\alpha_1$. 
By construction $\alpha_1$ is disjoint from $A_2$ and crosses
$A_1$ while $\alpha_2$ is disjoint from $A_1$ and crosses
$A_2.$ (See Figure \ref{fig2}.)

Note that any essential closed curve in $A_1$ must intersect $\alpha_1$ and
must contain points of both components of the complement of $ C_2.$
To see this latter fact we note that we constructed $\alpha_1$
to lie in one component of the complement of $ C_2$ but we could as
well have constructed $\alpha_1'$ in the other component of this
complement.  Any essential curve in $A_1$ must intersect both
$\alpha_1$ and $\alpha_1'.$
Similarly any essential curve in $A_2$ must intersect $\alpha_2$ and
must contain points of both components of the complement of $ C_1$.

There is a key consequence of these facts which we now explore.  Let
$\gamma_0$  be an arc with interior in $A_1$, disjoint from $A_2
\cup \alpha_1$ and with endpoints in different components of the
complement of $A_1$.  Replace $f$ and $g$ by $f^m$ and $g^{m'}$ where
$m$ and $m'$ are the numbers guaranteed by Lemma~\ref{trans num}
for $f$ and $g$ respectively.
Then we know that for $k \ne 0$ the curve $f^k(\gamma_0)$ 
must intersect more than one lift of the arc
$\alpha_1$ in the universal covering $\ti A_1$.

It follows that
$f^k(\gamma_0)$ contains a subarc whose union with a subarc of
$\alpha_1$ is essential in $A_1$.  Hence we conclude that $f^k(\gamma_0)$
contains a subarc crossing $A_2$, i.e. a subarc $\gamma_1$ 
whose interior lies in $A_1 \cap A_2$
(and hence is disjoint from $\alpha_2$), and whose endpoints are in
different components of the complement of $A_2.$ (See Figure~\ref{fig2}.)
 Since we replaced $g$ by  $g^{m'}$ above
we know that for $k \ne 0$ the curve $g^k(\gamma_1)$ 
must intersect more than one lift of the arc
$\alpha_2$ in the universal covering $\ti A_2$.

We can now construct $\gamma_2$ in a similar manner but switching
the roles of $f$ and  $g, \ \alpha_1$ and $\alpha_2,$ and $A_1$
and $A_2$.  More precisely, for any $k \ne 0$ the curve
$g^k(\gamma_1)$ contains a subarc whose
union with a subarc of $\alpha_2$ is essential in $A_2$.  It follows
that $g^k(\gamma_1)$ contains a subarc $\gamma_2$ 
whose interior lies in $A_1 \cap A_2$
and whose endpoints are in different components of the complement
of $A_1.$ Note that $\gamma_2$ is a subarc of $g^mf^k(\gamma_0).$

 We can repeat this construction indefinitely, each time switching 
$f$ and $g$.  Hence if we are given 
$h = g^{m_1}f^{k_1} \dots g^{m_n}f^{k_n}$ and $m_i \ne 0,\ k_i \ne 0$
we can obtain a non-trivial arc $\gamma_{2n}$
which is a subarc of $h(\gamma_0)$.  Since $\gamma_{2n} \subset
A_1 \cap A_2$ and $\gamma_0 \cap A_2 = \emptyset$ it is not possible
that $h = id.$  But every element of the group generated by $f$ and
$g$ is either conjugate to a power of $f$, a power of $g,$ or
an element expressible in the form of $h$.  Hence we conclude
that the group generated by $f$ and $g$ is a non-abelian free group.
\end{proof}

\noindent{\bf Proof of Theorem~\ref{tits special case}.}
Suppose $f \in G \cap \zz(M)$ and $U \in \cA(f)$ is a maximal annulus
for $f$. One possibility is that there is a finite index subgroup
$G_0$ of $G$ which preserves infinitely many of the trimmed rotational
level sets for $f$ which lie in $U$.  In this case Lemma~\ref{abelian case}
implies $G_0$ is abelian and we are done.

If this possibility does not occur,
we claim that there exists a trimmed level set $C$ in $U$ 
and $h_0 \in G$ such
that $h_0(C) \cap C \ne C$ but $h_0(C) \cap C \ne \emptyset$. 
If this is not the case then for every $h \in G$ and every 
$C$ either $h(C) = C$ or $h(C) \cap C = \emptyset$.
It follows that the $G$-orbit of $C$ consists of pairwise disjoint
copies of $C$.  Since elements of $G$ preserve area this orbit
must be finite and it follows that the subgroup of $G$ which
stabilizes $C$ has finite index.  If we now choose $C_0$ and $C_0'$,
two trimmed level sets in $U$ and let $G_0$ be the finite index subgroup of
$G$ which stabilizes both of them, then the annulus $U_0$ which
they bound is $G_0$-invariant. Now if $C$ is any trimmed level set in $U_0$ then
its $G_0$-orbit lies in $U$ and we conclude from area preservation
that $g(C) \cap C \ne \emptyset$ for all $g \in G_0.$   If for
some $g$ and $C$ we have  $g(C) \cap C \ne C$ we have demonstrated the
claim and otherwise we are in the previous case since $G_0$ preserves
the infinitely many trimmed level sets lying in $U_0$.

So we may assume that the claim holds, i.e., that
 there is $h_0 \in G$  and a trimmed level set $C_1$ for $f$ in $U$ such
that $h_0(C_1) \cap C_1 \ne C_1$ and $h_0(C_1) \cap C_1 \ne \emptyset$.
Let $C_2 = h_0(C_1)$ and $g = h_0fh_0^{-1}$.   Since $C_1 \ne C_2$
and each is the common frontier of its complementary components, it
follows that each of the complementary components of $C_1$ has non-empty
intersection with each of the complementary components of $C_2$.
Hence we may choose points 
$x_i \in M \setminus (C_1 \cup C_2),\ 1 \le i \le 4$ satisfying the
hypothesis of Lemma~\ref{free case} which completes the proof.
\qed

\bibliographystyle{plain}

\end{document}